\newcommand{\nicecolor}{Navy}
\setlist[1]{wide}
\setlist[2]{leftmargin=15mm}
\setlist[enumerate]{label=\rm{(\arabic*)}}
\setlist[enumerate,2]{label=\rm({\it\roman*}), }
\setlist[itemize]{label=\raisebox{0.25ex}{\tiny$\bullet$}}
\newtheorem{theorem}{Theorem}
\newtheorem{lemma}{Lemma}[section]
\newtheorem{corollary}[lemma]{Corollary}
\newtheorem{proposition}[lemma]{Proposition}
\newtheorem{conjecture}[lemma]{Conjecture}
\theoremstyle{definition}
\newtheorem{definition}[lemma]{Definition}
\theoremstyle{remark}
\newtheorem{remark}[lemma]{Remark}
\newtheorem{example}[lemma]{Example}
\newcommand\kk{{\mathbf k}}
\renewcommand\k{\kk}
\newcommand\A{{\mathbb A}}
\newcommand\iso{\stackrel{\simeq}{\longrightarrow}}
\newcommand{\Aut}{\mathrm{Aut}}
\newcommand{\Jac}{\mathrm{Jac}}
\newcommand{\Spec}{\mathrm{Spec}}
\title{Bivariables and V\'en\'ereau polynomials}
\author[J.~Blanc]{J\'er\'emy~Blanc}
\address{J. Blanc, Universit\"{a}t Basel, Departement Mathematik und Informatik, Spiegelgasse $1$, CH-$4051$ Basel, Switzerland.}
\email{jeremy.blanc@unibas.ch}
\urladdr{http://algebra.dmi.unibas.ch/blanc/}
\author[P.-M.~Poloni]{Pierre-Marie~Poloni}
\address{P.-M.~Poloni, Universit\"{a}t Basel, Departement Mathematik und Informatik, Spiegelgasse $1$, CH-$4051$ Basel, Switzerland.}
\email{pierre-marie.poloni@unibas.ch}
\thanks{The authors acknowledge support by the Swiss National Science Foundation Grant \textquotedblleft Curves in the spaces\textquotedblright 200021--169508.}
\begin{document}

\begin{abstract}
We study a family of polynomials introduced by Daigle and Freudenburg, which contains the famous V\'en\'ereau polynomials and  defines $\mathbb{A}^2$-fibrations over $\mathbb{A}^2$. According to the Dolgachev-Weisfeiler conjecture, every such fibration  should have the structure of a locally trivial $\mathbb{A}^2$-bundle over $\mathbb{A}^2$.  We follow an idea of Kaliman and Zaidenberg   to show that  these fibrations are locally trivial $\mathbb{A}^2$-bundles over the punctured plane, all of the same specific form $X_f$, depending on an element $f\in k[a^{\pm 1},b^{\pm 1}][x]$. We then introduce the notion of bivariables and show that the set of bivariables is   in bijection with the set of locally trivial bundles $X_f$ that are trivial. This allows us to give another proof of Lewis's result stating that the second V\'en\'ereau polynomial is a variable  and also to trivialise other elements of the family $X_f$. We hope that the terminology and methods developed here may lead to future study of the whole family $X_f$.
\end{abstract}

\maketitle

\section{Introduction} 

Throughout this paper, we work over a fixed ground field $\kk$ and all algebraic varieties and morphisms are defined over it.

The Dolgachev-Weisfeiler conjecture \cite[Conjecture 3.8.5]{DW}   is a famous open problem in affine algebraic geometry that concerns \emph{$\A^n$-fibrations}, i.e.~morphisms $X\to Y$ between affine varieties with the property that every fibre is isomorphic to the $n$-dimensional affine space $\A^n$.
 
The conjecture predicts that every such $\A^n$-fibration should have the structure of an $\A^n$-bundle (locally trivial in the Zariski topology), when the target variety $Y$ is normal. 

Recall that every $\A^n$-bundle over an affine variety is isomorphic to a vector bundle (Bass-Connell-Wright Theorem \cite{BCW}) and moreover that every vector bundle over $\A^m$ is trivial (Quillen-Suslin Theorem \cite{Qui,Sus}). Hence, the Dolgachev-Weisfeiler conjecture is often reformulated as follows in the case where the target variety is an affine space. 

\begin{conjecture}[Dolgachev-Weisfeiler conjecture] 
 Every $\A^n$-fibration $X\to \A^m$ is isomorphic to the trivial fibration $\A^m\times \A^n\to \A^m$.
\end{conjecture}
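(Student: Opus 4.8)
Since the Dolgachev--Weisfeiler conjecture is a famous open problem, any ``proof'' can only be a strategy; I will describe the route that the structural results quoted above make available and that the abstract announces in the first genuinely open case $n=m=2$. The overall philosophy splits the statement into two independent tasks: first, upgrading the $\A^n$-\emph{fibration} $\pi\colon X\to\A^m$ to a Zariski-locally trivial $\A^n$-\emph{bundle}, and second, trivialising that bundle. The second task is already settled by the cited theorems: a locally trivial $\A^n$-bundle over an affine base is a vector bundle by Bass--Connell--Wright, and every vector bundle over $\A^m$ is trivial by Quillen--Suslin. So the entire content of the conjecture is the \emph{local triviality} of $\pi$, and this is where all the work lies.

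For that local triviality the plan is to induct on the dimension $m$ of the base, using the known low-dimensional cases as black boxes. First I would record that $\pi$ is \emph{generically} an $\A^n$-bundle: over a dense open subset $U\subseteq\A^m$ the fibration is trivial, so the ``bad locus'' $Z=\A^m\setminus U$ over which local triviality might fail is a proper closed subset. Next I would try to shrink $Z$ one codimension at a time. Restricting $\pi$ to a general curve, or to the local ring at a codimension-one point of $\A^m$, reduces the question to an $\A^n$-fibration over a one-dimensional regular base, where Sathaye-type theorems (for $n=2$) guarantee triviality; propagating this would establish that $\pi$ is a bundle over all of $\A^m$ outside a closed subset of codimension $\ge 2$. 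The delicate part is the passage across the deepest strata of $Z$.

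This is exactly where I would follow Kaliman--Zaidenberg and the strategy of the abstract. Rather than attack an arbitrary fibration, I would reduce the Daigle--Freudenburg family to a normal form: after localising away from the origin one shows that these $\A^2$-fibrations over $\A^2$ become locally trivial $\A^2$-bundles over the punctured plane, all isomorphic to a standard bundle $X_f$ attached to an element $f\in\kk[a^{\pm1},b^{\pm1}][x]$. The trivialisation question for $X_f$ is then repackaged combinatorially through the notion of \emph{bivariable}, the set of bivariables being placed in bijection with those $X_f$ that are trivial; proving that a given member is a variable amounts to exhibiting the corresponding bivariable. I expect the main obstacle to be precisely this last step --- producing (or obstructing) a bivariable for a prescribed $f$ --- since it encodes the subtle automorphism-theoretic phenomena that keep the Vénéreau polynomials and the full conjecture open. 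The general case remains out of reach, and only for special $f$, such as the one corresponding to the second Vénéreau polynomial, do I expect the bivariable to be written down explicitly.
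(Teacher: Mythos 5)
This statement is the Dolgachev--Weisfeiler conjecture itself, which the paper explicitly presents as a famous open problem and does not prove, so there is no proof to compare against; your proposal correctly recognizes this and cannot be graded as a proof. The strategy you outline --- reduce the conjecture to Zariski-local triviality via Bass--Connell--Wright and Quillen--Suslin, then attack the Daigle--Freudenburg family by normalising its restrictions over the punctured plane to the bundles $X_f$ and recasting their triviality in terms of bivariables --- is precisely the paper's own framing, though your middle paragraph (induction on the base dimension, restriction to general curves, Sathaye-type propagation) is speculation that appears nowhere in the paper and is not known to work.
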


This was proven to be true for $(n,m)=(1,1)$ in \cite[Proposition 3.7]{DW}, when $n=1$ and $m$ is arbitrary in \cite{KW1985}, and when $(n,m)=(2,1)$ in \cite{KZ2001}. See also \cite{KR2014} for a recent proof of these results. All other cases remain wide open. 

In this paper, we focus on the case $n=m=2$ and study  a family of $\A^2$-fibrations introduced by Daigle and Freudenburg in \cite{DF}. These fibrations are of the form 
\[\pi_{P,n}\colon\A^4\to\A^2, (x,y,z,u)\mapsto(x,v_{P,n}),\]
 where $n$ denotes a positive integer, $P(z)\in\kk[z]$ a polynomial of degree at least two and where $v_{P,n}\in \kk[x,y,z,u]$ is given by
\[v_{P,n}=y+x^n\left(xz+y(yu+P(z))\right).\]
The special case where $n=1$ and $P(z)=z^2+z$ corresponds to an old example due to Bhatwadekar and Dutta \cite{BD} whereas the polynomials $v_{z^2,n}$ are the famous    polynomials introduced in the PhD thesis of V\'en\'ereau \cite{Ven}.

Let us  recall the notion of \emph{$A$-variable} of a ring $B$. We say that  an element $f\in B$ is an $A$-variable of $B$, if $B=A[X_1,\ldots, X_k]$ is a polynomial ring over the commutative ring  $A$ and if there exits an automorphism of $B$ that fixes $A$ and sends $f$ onto one of the indeterminates. With this notion, we can reformulate  the above problem  in algebraic terms as follows. The $\A^2$-fibration $\pi_{P,n}$ is a (locally) trivial $\A^2$-bundle if and only $v_{P,n}$ is a $\k[x]$-variable of $\k[x,y,z,u]$.

In his PhD Thesis, V\'en\'ereau showed that $v_{z^2,n}$ is a $\k[x]$-variable of the ring $\kk[x,y,z,u]=\kk[x][y,z,u]$ for each $n\geq3$. More recently, Lewis \cite{Lewis} succeeded to prove that $v_{z^2,n}$ is also a $\k[x]$-variable. Nevertheless, it is still unknown whether $v_{z^2,1}$ is a $\k[x]$-variable, or even if it is a $\k$-variable of $\kk[x,y,z,u]$. Note that the latter question, which is weaker a priori, corresponds to the Dolgachev-Weisfeiler conjecture applied to the $\A^3$-fibration given by $\A^4\to \A^1$, $(x,y,z,u)\mapsto v_{z^2,1}$. 
On the other hand, one can prove that every polynomial $v_{P,n}$ is $1$-stable $\k[x]$-variable (see Proposition~\ref{Prop:variables_stables}).

Let us fix from now on some coordinates $a,b$ on $\A^2$. In \cite{KZ2004}, Kaliman and Zaidenberg introduced an interesting strategy to study V\'en\'ereau polynomials based on the fact that the morphisms $\pi_{z^2,n}$ are trivial $\A^2$-bundles over the two open subsets $U_a=\A^2\setminus\{a=0\}$ and $U_b=\A^2\setminus\{b=0\}$. Using this idea, they could reprove that $\pi_{z^2,n}$ is isomorphic to the trivial fibration for every $n\ge 3$. Nevertheless, they couldn't push this technique further at that time and were not able to treat the case of $v_{z^2,n}$ with $n=1$ or $n=2$. 
In the present paper, we give Kaliman and Zaidenberg's strategy another try and want to apply it to the more general polynomials $v_{P,n}$. For this, we will introduce the notion bivariables. The fact that the second V\'en\'ereau polynomial $v_{z^2,2}$, and more generally all $v_{P,2}$ with $P$ of degree $2$, are $\k[x]$-variable will follow quite simply from this new point of view.

We shall indeed prove  in Section~\ref{sec:localtrivialityofpiPn} that every map $\pi_{P,n}$ is a trivial $\A^2$-bundle over $U_a$ and over $U_b$. The preimage of the origin being also isomorphic to $\A^2$ (since it is given by the equations $x=y=0$), this implies that every $\pi_{P,n}$ is an $\A^2$-fibration and therefore that it should have, according to the Dolgachev-Weisfeiler conjecture, the structure of  an $\A^2$-bundle. To check that this is the case, one needs either to find a neighbourhood of the origin over which the fibration is trivial, or to show that $\pi_{P,n}$ defines the trivial fibration over the punctured plane $\A^2_{*}=\A^2\setminus \{(0,0)\}=U_a\cup U_b$ (see Lemma~\ref{Lem:Equivalenttrivialities}).

Having this question in mind, we will consider the following family of locally trivial $\A^2$-bundles over $\A^2_{*}$, obtained by gluing trivial bundles over $U_a$ and $U_b$ along their intersection $U_{ab}=U_a\cap U_b=\A^2\setminus \{ab=0\}$.
\begin{definition}\label{Defi:Xf} Given an element $f=f(a,b,x)\in \kk[a^{\pm 1},b^{\pm 1}][x]$, we denote by $X_f$ the variety obtained by gluing together the affine varieties
$U_a \times \A^2$ and $U_{b}\times \A^2$ by means of the transition function 
\[\begin{array}{ccc}
U_{ab}\times \A^2& \to& U_{ab}\times \A^2\\
((a,b),(x,y))&\mapsto& \left((a,b),(x,y+f(a,b,x))\right).\end{array}\]
We denote by $\rho_f\colon X_f\to \A^2_*$ the $\A^2$-bundle over the punctured affine plane $\A^2_*=U_a\cup U_b$ given by the projection onto the first factor.
\end{definition}

In Section~\ref{sec:localtrivialityofpiPn}, we will prove the following result, which shows that one can reduce the study of the morphisms $\pi_{P,n}$ to the study of a special family of varieties $X_f$.
\begin{theorem}\label{Theorem:fcs_de_transition}
For each $n\ge 1$ and each $P\in \k[z]$, the restriction of the $\A^2$-fibration $\pi_{P,n}$ over the punctured plane $\A^2_*$ is a locally trivial $\A^2$-bundle isomorphic to $\rho_f\colon X_f\to \A^2_*$ with 
\[f=\frac{x}{ab^2}-\frac{1}{ab^m}\cdot \frac{b^m-(a^nx)^m}{b-a^nx}P(\frac{x}{a})\in \kk[a^{\pm1},b^{\pm1}][x],\]
where $m$ is any integer such that $mn>\deg(P)$.
\end{theorem}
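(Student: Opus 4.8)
The plan is to realise $\pi_{P,n}^{-1}(\A^2_*)$ as a principal bundle under the additive group and to read off the gluing of $X_f$ as its cocycle. Write $v=v_{P,n}=y+x^n(xz+y(yu+P(z)))$ and $(a,b)=(x,v)$, so that $U_a=\{x\neq0\}$ and $U_b=\{v\neq0\}$. The key object is the derivation
\[\delta=y^2\frac{\partial}{\partial z}-\bigl(x+yP'(z)\bigr)\frac{\partial}{\partial u}\quad\text{of }\k[x,y,z,u].\]
First I would check that $\delta$ is locally nilpotent (since $\delta(z)=y^2$ is a constant of $\delta$ and each application of $\delta$ to $u$ lowers the degree of the relevant derivative of $P$) and that $\delta(x)=\delta(y)=\delta(v)=0$, so that $\delta$ generates a $\mathbb{G}_{\mathrm a}$-action preserving the fibres of $\pi_{P,n}$. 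Its two coefficients vanish simultaneously only on $\{x=y=0\}$, which is exactly $\pi_{P,n}^{-1}(0,0)$; hence $\delta$ is fixed-point free precisely on $\A^4\setminus\{x=y=0\}=\pi_{P,n}^{-1}(\A^2_*)$.

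Next I would identify the quotient. Since $x,y,v\in\ker\delta$, there is a morphism $\pi_{P,n}^{-1}(\A^2_*)\to\A^2_*\times\A^1$, $(x,y,z,u)\mapsto((x,v),y)$. A short orbit computation shows that over each point its fibre is a single free orbit isomorphic to $\A^1$ — parametrised by $z$ where $y\neq0$ and by $u$ where $y=0$ — so this map is a $\mathbb{G}_{\mathrm a}$-torsor. Under the isomorphism we are after, the invariant coordinate $y$ becomes the preserved fibre coordinate $x$ of Definition~\ref{Defi:Xf}, and a slice of $\delta$ becomes the shifted coordinate $y$ of that definition.

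The restrictions of the torsor to $U_a$ and to $U_b$ have affine base, hence are trivial, and I would exhibit explicit slices $s_a,s_b$ with $\delta(s_a)=\delta(s_b)=1$. Over $U_a$ the invertibility of $x$ lets one solve for such a slice by setting $z=y^2s_a+\zeta(y,v)$ and using the relation defining $v$; the congruence $x\zeta+yP(\zeta)\equiv x^{-n}(v-y)\pmod{y^2}$ admits a polynomial solution $\zeta$ with $\zeta\equiv x^{-n-1}v\pmod y$, after which $u$ becomes a polynomial in $(y,v,s_a)$ over $\k[x^{\pm1}]$, so that $(y,s_a)$ trivialise the bundle. The analogous construction over $U_b$, using instead the invertibility of $v$ and arranged so as to stay regular along $\{y=0\}$, gives $s_b$. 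Because $\mathbb{G}_{\mathrm a}$ acts by translation in the slice coordinate, the two trivialisations $\pi_{P,n}^{-1}(U_a)\cong U_a\times\A^2$ and $\pi_{P,n}^{-1}(U_b)\cong U_b\times\A^2$ glue over $U_{ab}$ by $((a,b),(x,y))\mapsto((a,b),(x,y+f))$ with the $\delta$-invariant cocycle $f=s_a-s_b\in\k[a^{\pm1},b^{\pm1}][x]$; this is precisely the gluing of $X_f$.

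The remaining, and I expect main, difficulty is not the existence of the trivialisations — that is soft once the torsor property is established — but the explicit computation of the cocycle $s_a-s_b$ and its identification with the stated $f$. Carrying out the two integrations and simplifying, the contribution of $P$ produces the factor $\frac{b^m-(a^nx)^m}{b-a^nx}=\sum_{i=0}^{m-1}b^{m-1-i}(a^nx)^i$ together with $P(x/a)$, where $m$ is an auxiliary integer used to truncate a geometric expansion of the $U_b$-slice. The hypothesis $mn>\deg(P)$ is exactly what makes this truncation exact and guarantees that, after multiplying by $(ab^m)^{-1}$, every power of the fibre coordinate $x$ occurring in $f$ is non-negative, so that indeed $f\in\k[a^{\pm1},b^{\pm1}][x]$; the same inequality shows that passing from $m$ to $m+1$ changes $f$ only by a term in $\k[a,b^{\pm1}][x]$, i.e.\ by a coboundary over $U_b$, so the resulting bundle $X_f$ is independent of the admissible choice of $m$. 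This furnishes the required isomorphism with $\rho_f\colon X_f\to\A^2_*$.
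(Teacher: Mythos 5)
Your derivation $\delta = y^2\frac{\partial}{\partial z}-(x+yP'(z))\frac{\partial}{\partial u}$ is indeed locally nilpotent, kills $x$, $y$ and $v_{P,n}$, and its fixed locus is exactly $\{x=y=0\}$; up to that point the setup is sound. The genuine gap comes immediately after: the map $\pi_{P,n}^{-1}(\A^2_*)\to\A^2_*\times\A^1$, $(x,y,z,u)\mapsto((x,v),y)$, is \emph{not} a $\mathbb{G}_{\mathrm a}$-torsor, because $y$ is the wrong invariant to use as the third base coordinate. On the locus $\{x=0\}$ — which lies in $U_b$ and is precisely where all the difficulty of the theorem is concentrated — one has $v=y$ identically, so your map is not even surjective: the fibre over a point $((0,b),y_0)$ with $y_0\neq b$ is empty, while the fibre over $((0,b),b)$ is the whole plane $\A^2$ in the coordinates $(z,u)$, a one-parameter family of orbits (separated by the invariant $y(yu+P(z))$), not ``a single free orbit''. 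Your case distinction $y\neq 0$ versus $y=0$ silently assumes $x\neq0$. Consequently no trivialisation of the claimed form can exist over $U_b$. The invariant that does work is $\omega=xz+y(yu+P(z))=(v-y)/x^n$, which is regular on all of $\A^4$ and makes $(x,y,z,u)\mapsto((x,v),\omega)$ an $\A^1$-fibration away from $x=y=0$; this is exactly the third component of the map $\varphi_{P,n}$ of Lemma~\ref{Lemma:triviality over Ua}, i.e.\ the coordinate the paper preserves. Note also that in Definition~\ref{Defi:Xf} the gluing fixes the fibre coordinate called $x$, whereas your candidate $y$ corresponds on $X_f$ to $b-a^n\cdot(\text{that coordinate})$, which the gluing does not fix — so the identification ``invariant $y$ $\leftrightarrow$ preserved coordinate $x$'' could not have been correct.

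Even after replacing $y$ by $\omega$, the proposal remains incomplete, because the actual content of Theorem~\ref{Theorem:fcs_de_transition} is the explicit formula for $f$ together with the regularity of the $U_b$-trivialisation along $\{x=0\}$, and you explicitly defer exactly this (``carrying out the two integrations and simplifying\dots''). In the paper this is the concrete verification: one composes the $U_a$-trivialisation $\varphi_{P,n}$ with the shear $F_{P,n,m}^{-1}\colon(x,y,z,u)\mapsto(x,y,z,u-f(x,y,z))$ and checks that the last component of $F_{P,n,m}^{-1}\circ\varphi_{P,n}$ lies in $\kk[x,y,z,u,v^{-1}]$, which boils down to the congruence $uv^2y+v^2P(z)-\omega y\equiv 0\pmod{x}$ together with the hypothesis $mn>\deg(P)$ (which makes $x^{mn-1}P(\omega/x)$ polynomial). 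Without this computation — or your equivalent ``integration'' of slices — nothing in your argument identifies $f$, nor even shows that the fibration is locally trivial over $U_b$ at all. Your closing remark that changing $m$ to $m+1$ alters $f$ only by an element of $\kk[a,b^{\pm1}][x]$ is correct, but it too relies on the explicit formula you have not established.
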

In the special case where $P=x^2$, if we  choose the smallest $m$ such that $mn>2$, we get  that the fibration associated to the $n$-th V\'enereau polynomial is isomorphic to the fibration $\rho_{f_n}$, where
\[
f_n=\left \{ \begin{array}{lll}
\frac{x}{ab^2}-\frac{x^2}{a^3b} & \text{when}&n\ge 3,\vspace{0.1cm}\\
\frac{x}{ab^2}-\frac{x^2}{a^3b}-\frac{x^3}{ab^2} & \text{when}&n=2,\vspace{0.1cm}\\
\frac{x}{ab^2}-\frac{x^2}{a^3b}-\frac{x^3}{a^2b^2}-\frac{x^4}{ab^3}& \text{when}& n=1\end{array}\right.\]
(see Example~\ref{Exa:FforVenereau}).
We recover here at once the formulas that were first computed in \cite[Proposition 2]{KZ2004}.\\

As the transition functions of the $\A^2$-bundle $\rho_f\colon X_f\to \A^2_*$    fix the  coordinate $x$, the variety $X_f$ can be also  naturally seen as an $\A^1$-bundle $X_f\to \A^2_{*}\times \A^1$ via the projection onto the first three coordinates. On the contrary to the difficult question of the isomorphism as $\A^2$-bundles, it is  straightforward to decide whether two varieties $X_f$ and $X_g$ are isomorphic as $\A^1$-bundles (see  Lemma~\ref{Lemm:A1bundle}). As we will explain in Section~\ref{Sec:Bivariables}, the triviality of an $\A^2$-bundle  $\rho_f\colon X_f\to \A^2_*$ is related to the notion of \emph{bivariables} which we introduce in Definition~\ref{Defi:bivariable}. A bivariable is a polynomial in $\k[a,b][x,y]$ that becomes a variable when being seen as an element of $\k[a^{\pm1},b][x,y]$ as well as when   seen  as an element of $\k[a,b^{\pm1}][x,y]$. Since the group $\Aut_{\k[a,b]}(\kk[a,b][x,y])$ naturally acts on the set of bivariables, we may consider bivariables up to the action of this group (see Definition~\ref{Rem:ActionAutOnBivariables}). This leads us to a natural correspondence between bivariables and $\A^2$-bundles $X_f$ that are trivial. More precisely, we shall establish the following result.
\begin{theorem}\label{Theorem:BijectionBivariables}\item
\begin{enumerate}
\item\label{ThmBiV1}
Every bivariable $\omega\in \kk[a,b][x,y]$ trivialises an $\A^2$-bundle $\rho_f\colon X_f\to \A^2_*$ for some $f\in \kk[a^{\pm 1},b^{\pm 1}][x]$. 

More precisely, given a bivariable $\omega\in \kk[a,b][x,y]$, there exist elements $\tau_a$, $\tau_b$ and $f(x)$ of $\kk[a^{\pm 1},b][x,y]$, $\kk[a,b^{\pm 1}][x,y]$ and $\kk[a^{\pm 1},b^{\pm 1}][x]$, respectively, such that 
$\kk[a^{\pm1},b][\omega,\tau_a]=\kk[a^{\pm1},b]$, $\kk[a,b^{\pm1}][\omega,\tau_b]=\kk[a,b^{\pm1}][x,y]$ and $\tau_a=\tau_b+f(\omega).$ Moreover, the variety $X_f$ trivialised by $\omega$ is uniquely defined up to isomorphism of $\A^1$-bundles.
\item\label{ThmBiV2}The map that associates to a bivariable $\omega$ a variety $X_f$ trivialised by $\omega$ induces a bijection 
\[\begin{array}{ccc}
\left\{\begin{array}{c}\text{bivariables up} \\ \text{to the action of } \\ \Aut_{\k[a,b]}(\kk[a,b][x,y])\end{array}\right\} &\stackrel{\kappa}{\longrightarrow}& \left\{\begin{array}{c}\text{Varieties $X_f$ which are } \\ \text{ trivial $\A^2$-bundles}\\ \text{up to isomorphisms} \\ \text{of $\A^1$-bundles}\end{array}\right\}\\
\omega & \mapsto & X_f\end{array}\]
\end{enumerate}
\end{theorem}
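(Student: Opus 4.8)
The plan is to prove part~\ref{ThmBiV1} by explicitly completing the bivariable over each chart and comparing the two completions over the overlap, and to prove part~\ref{ThmBiV2} by exhibiting an inverse to $\kappa$ coming from the observation that any trivialisation of $X_f$ turns the distinguished coordinate $x$ into a bivariable.

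First I would treat part~\ref{ThmBiV1}. Since $\omega$ is a bivariable, Definition~\ref{Defi:bivariable} provides $\tau_a\in\kk[a^{\pm1},b][x,y]$ and $\tau_b\in\kk[a,b^{\pm1}][x,y]$ with $\kk[a^{\pm1},b][\omega,\tau_a]=\kk[a^{\pm1},b][x,y]$ and $\kk[a,b^{\pm1}][\omega,\tau_b]=\kk[a,b^{\pm1}][x,y]$. Both $(\omega,\tau_a)$ and $(\omega,\tau_b)$ are then coordinate systems over the overlap ring $R=\kk[a^{\pm1},b^{\pm1}]$ sharing the first coordinate $\omega$. I would argue that any $R$-automorphism of $R[x,y]$ fixing $\omega$ has the form $\tau\mapsto u\tau+q(\omega)$ with $u\in R^*$ and $q\in R[\omega]$: its Jacobian with respect to $(\omega,\tau)$ equals $\partial\tau'/\partial\tau$, which must be a unit of $R[x,y]$ and hence lies in $R^*$ since $R$ is reduced. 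This yields $\tau_b=u\tau_a+q(\omega)$ with $u\in R^*$. Writing $u=\lambda a^ib^j$ and replacing $\tau_a$ by $\lambda a^i\tau_a$ and $\tau_b$ by $b^{-j}\tau_b$ (admissible coordinate changes over $U_a$, resp.\ $U_b$), I normalise $u$ to $1$ and obtain $\tau_a=\tau_b+f(\omega)$ for some $f\in R[x]=\kk[a^{\pm1},b^{\pm1}][x]$. The two coordinate systems $(\omega,\tau_a)$ over $U_a$ and $(\omega,\tau_b)$ over $U_b$ then glue the trivial bundle $\A^2_*\times\A^2$ into the model of Definition~\ref{Defi:Xf} with transition $\tau_a=\tau_b+f(\omega)$, so $\omega$ trivialises $X_f$. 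The only remaining freedom is to replace $\tau_a,\tau_b$ by $\lambda\tau_a+q_a(\omega)$ and $\lambda\tau_b+q_b(\omega)$ with $\lambda\in\kk^*$, $q_a\in\kk[a^{\pm1},b][\omega]$ and $q_b\in\kk[a,b^{\pm1}][\omega]$ (matching the leading units forces the same $\lambda$ on both sides, since $\kk[a^{\pm1},b]^*\cap\kk[a,b^{\pm1}]^*=\kk^*$), which changes $f$ into $\lambda f+q_a(\omega)-q_b(\omega)$. By Lemma~\ref{Lemm:A1bundle} this leaves $X_f$ unchanged up to isomorphism of $\A^1$-bundles, giving the asserted uniqueness.

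For part~\ref{ThmBiV2} I would construct the inverse of $\kappa$ directly. Given a variety $X_f$ that is a trivial $\A^2$-bundle, fix an isomorphism $X_f\iso\A^2_*\times\A^2$ of $\A^2$-bundles over $\A^2_*$. Because the origin has codimension $2$ in $\A^2$, global functions extend across it, so $\mathcal{O}(\A^2_*\times\A^2)=\kk[a,b][s,t]$; expressing the distinguished coordinate $x$ of $X_f$ in these coordinates gives a polynomial $\omega\in\kk[a,b][s,t]$. Restricting the trivialisation over $U_a$ and over $U_b$, where $x$ is one of the standard bundle coordinates, shows that $\omega$ is a variable both of $\kk[a^{\pm1},b][s,t]$ and of $\kk[a,b^{\pm1}][s,t]$, i.e.\ $\omega$ is a bivariable (after renaming the fibre coordinates). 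Two trivialisations differ by an automorphism of $\A^2_*\times\A^2$ over $\A^2_*$, which by the same extension argument is an element of $\Aut_{\kk[a,b]}(\kk[a,b][x,y])$; moreover any isomorphism of $\A^1$-bundles $X_f\to X_g$ preserves the coordinate $x$. Hence the class of $\omega$ modulo $\Aut_{\kk[a,b]}(\kk[a,b][x,y])$ depends only on the isomorphism class of $X_f$ as an $\A^1$-bundle, so the construction descends to a well-defined map on the right-hand set of the stated bijection. A direct comparison with part~\ref{ThmBiV1} — where $\omega$ is precisely the function playing the role of $x$ in $X_f$ — shows that the two maps are mutually inverse, and in particular that $\kappa$ is a bijection.

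Finally, I expect the main obstacle to be bookkeeping the two equivalence relations simultaneously and verifying that $\kappa$ and its inverse descend to the respective quotients. Concretely, the two technical inputs that make this work are the identification of fibrewise automorphisms of the trivial bundle over $\A^2_*$ with $\Aut_{\kk[a,b]}(\kk[a,b][x,y])$, which rests on the codimension-$2$ extension of functions across the puncture, and the normalisation $u=1$ in part~\ref{ThmBiV1}, which uses the precise description of the units of $\kk[a^{\pm1},b]$, $\kk[a,b^{\pm1}]$ and $\kk[a^{\pm1},b^{\pm1}]$ together with the Jacobian criterion for automorphisms fixing a coordinate. Once these are in place, well-definedness, surjectivity and injectivity all follow from the two maps being inverse to each other, while Lemma~\ref{Lemm:A1bundle} absorbs the residual freedom in the choice of $\tau_a,\tau_b$.
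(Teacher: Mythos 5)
Your proposal is correct and takes essentially the same route as the paper: your Jacobian-unit/triangular-form argument (any automorphism over $\kk[a^{\pm1},b^{\pm1}]$ fixing $\omega$ is $\tau\mapsto u\tau+q(\omega)$, then normalise $u$) is exactly the paper's normalisation $\Jac=1$ in its Lemma~\ref{lemme-def-tau_a-tau_b}, and the uniqueness step via Lemma~\ref{Lemm:A1bundle} coincides with the paper's. Your explicit inverse of $\kappa$ repackages, but does not change, the paper's argument for part~\ref{ThmBiV2}: extracting a bivariable from a trivialisation by extending $x$ across the puncture is its Remark~\ref{Rem:intro-bivariables} (surjectivity), and the observation that two trivialisations differ by an element of $G_a\cap G_b=\Aut_{\kk[a,b]}(\kk[a,b][x,y])$ is its injectivity/well-definedness argument.
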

According to the above bijection, the varieties $X_f$ that define trivial $\A^1$-bundles correspond to the set of $\k[a,b]$-variables of $\k[a,b,x,y]$, i.e.~to the set of trivial bivariables (see Example~\ref{exple-trivialXf}). The polynomials of the form $a^mx+b^ny$ with $m,n\ge 1$ are easy examples of non-trivial bivariables. They  correspond  to the varieties $X_f$ with $f=\frac{x}{a^mb^n}$ (see Example~\ref{exple-bivariable1}).

In Section~\ref{section:construction_of_bivariables}, we describe a procedure to construct more bivariables starting from a given bivariable $\omega$. Indeed, if  $\tau_a$, $\tau_b$ and $f(x)$ are given as in the first assertion  in Theorem~\ref{Theorem:BijectionBivariables} and if  $m,n\ge 1$ are positive integers such that $a^mb^nf(x)\in \k[a,b,x,y]$, then we can define,  for all polynomials $Q\in \k[a,b][x]$, new bivariables $\tilde{w}$ and $\hat\omega$ by  
$\hat\omega=\omega+aQ(a^m\tau_a)$ and $\tilde{\omega}=\omega+bQ(b^n\tau_b)$  (see Proposition~\ref{Prop:NewBiV}).

Applying this procedure to the bivariable $ax+by^2$, we obtain new bivariables of the form $ax+b^2y+bP(x)$, $P(x)\in\kk[x]$, which correspond  to the $\A^2$-bundles over $\A^2_*$ associated with the fibrations $\pi_{P,n}$ when $n>\deg(P)$ (see Example~\ref{Ex:m=1}). Therefore, this  gives a simple proof  that the polynomials $v_{P,n}$
are $\k[x]$-variables for each $n>\deg(P)$, and in particular that the $n$-th V\'en\'ereau polynomials are $\k[x]$-variables for all $n\geq3$ (see Remark \ref{rem:Venereau3}).

In the case where $\deg(P)=2$ and $\mathrm{char}(\k)\neq2$, we can go one step further: Applying the procedure to $\omega=ax+b^2y+bP(x)$, we now obtain the new bivariable $\omega+\frac{a^5}{2c}(\frac{y}{a}+\frac{P(x)}{ab}-\frac{1}{ab}P(\frac{\omega}{a}))$ which corresponds to the  $\A^2$-bundle   associated with the fibration $\pi_{P,2}$ (see Lemma~\ref{Lemm:V2}). This proves that the polynomials $v_{P,2}$ are $\k[x]$-variables when $\deg(P)=2$ and  $\mathrm{char}(\k)\neq2$. In particular, this generalises and gives a different proof for Lewis's result stating that the second V\'en\'ereau polynomial $v_{z^2,2}$ is a  $\k[x]$-variable.

Although the bundles $\rho_{f_3}$ and $\rho_{f_2}$ were easily trivialised with this technique, we were unfortunately not able to go further and couldn't trivialise the bundle $\rho_{f_1}$ associated with the first V\'en\'ereau polynomial. Nonetheless, we can simplify it and show that it is equivalent to a bundle with the transition function of degree $3$ in $x$ (see Example~\ref{Example:simplify_V1degree3}) and also to another which is still of degree $4$ but has only three summands (see Example~\ref{Example:simplify_V1degree4}). Surprisingly,  we are able to trivialise the bundle $\rho_g$, where the function 
 \[g=\frac{x}{ab^2}-\frac{x^2}{a^3b}-\frac{x^3}{a^2b^2}-\frac{5}{4}\cdot\frac{x^4}{ab^3}\]
 differs from $f_1$ only by the coefficient $\frac{5}{4}$ in its last summand (see Example~\ref{Example:almost_V1}).

In Section~\ref{Sec:BlowUp}, we strengthen the result that the morphisms $\pi_{P,n}\colon \A^4\to \A^2$ are $\A^2$-bundles over $\A^2_*$, by proving the stronger fact that $\pi_{P,n}$ yields a locally trivial $\A^2$-bundle $\hat{\pi}_{P,n}\colon \hat{\A}^4\to \hat{\A}^2$, where  $\hat{\A}^2$ and $\hat{\A}^4$ are respectively obtained from $\A^2$ and $\A^4$ by blowing-up $(0,0)$ and the surface $\pi_{P,n}^{-1}((0,0))$ given by $x=y=0$ (see Theorem~\ref{Thm:BlowUp}). Proving that  $\pi_{P,n}$ is a locally trivial $\A^2$-bundle is then equivalent to prove that $\hat{\pi}_{P,n}$ is trivial, or to prove that $\hat{\pi}_{P,n}$ is trivial on a neighbourhood of the exceptional curve of $\hat{\A}^2$.

In Section~\ref{Sec:Xfmn1}, we study the family of varieties $X_f$ and their (non necessarily trivial) associated bundles $\rho_f\colon X_f\to \A^2_*$. In the case where the denominator of $f$ has degree at most $1$ in either $a$ or $b$, we give a direct criterion to decide whether $\rho_f$ is a trivial $\A^2$-bundle or not. 

\section{Local triviality over the punctured plane}\label{sec:localtrivialityofpiPn}

The aim of this section is to prove Theorem~\ref{Theorem:fcs_de_transition}. We shall indeed show that the restriction of every $\A^2$-fibration $\pi_{P,n}$ over the punctured plane $\A^2_*$ is a locally trivial $\A^2$-bundle which is moreover isomorphic to a bundle of the form $\rho_f\colon X_f\to \A^2_*$ for some explicit $f$ depending on $P$ and $n$. We will also prove that each polynomial $v_{P,n}$ is a so-called stable variable. \\

The following lemma already tells us that $v_{P,n}$ is a $\kk[x,x^{-1}]$-variable of the ring $\kk[x,x^{-1}][y,z,u]$, or equivalently that the restriction of $\pi_{P,n}$ to $U_a$ is a trivial $\A^2$-bundle.

\begin{lemma}\label{Lemma:triviality over Ua}
The rational map $\varphi_{P,n}\colon\A^4\to\A^4$ defined by 
\begin{align*}\varphi_{P,n}(x,y,z,u)=\Big(x, v_{P,n}&, xz+y(uy+P(z)),\\
&\qquad \dfrac{u}{x}-\frac{P(z+\dfrac{y(yu+P(z))}{x})-P(z)}{xy}\Big)\end{align*}
has Jacobian determinant $1$ and restricts to an automorphism of the complement $\A^4\setminus\{x=0\}$ of the hyperplane defined by the equation $x=0$. 
\end{lemma}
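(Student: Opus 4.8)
The plan is to verify the two assertions directly: first that the Jacobian determinant of $\varphi_{P,n}$ equals $1$, and second that $\varphi_{P,n}$ restricts to a bijective morphism on $\A^4\setminus\{x=0\}$ by exhibiting an explicit inverse. Let me write $\varphi_{P,n}(x,y,z,u)=(X,Y,Z,U)$ with
\[
X=x,\quad Y=v_{P,n}=y+x^n\bigl(xz+y(yu+P(z))\bigr),\quad Z=xz+y(yu+P(z)),
\]
and $U=\frac{u}{x}-\frac{P(z+y(yu+P(z))/x)-P(z)}{xy}$. The key structural observation is that $Y=y+x^nZ$, so the pair $(Y,Z)$ is a triangular (hence unimodular) modification of $(y,Z)$, and that $Z$ depends on $(x,y,z,u)$ while $U$ is adjusted precisely to cancel the leading contribution. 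My intention is to compute the Jacobian matrix with respect to the ordering $(x,y,z,u)$ and exploit its near-triangular shape.

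For the Jacobian computation I would first note that $\partial X/\partial(y,z,u)=0$ and $\partial X/\partial x=1$, so expanding along the first row reduces the $4\times 4$ determinant to the $3\times 3$ minor of $(Y,Z,U)$ with respect to $(y,z,u)$. Next, since $Y=y+x^nZ$, row operations give $\partial Y/\partial\bullet = \partial y/\partial\bullet + x^n\,\partial Z/\partial\bullet$, so subtracting $x^n$ times the $Z$-row from the $Y$-row replaces the $Y$-row by the derivatives of $y$, namely $(1,0,0)$. Thus the $3\times 3$ determinant collapses to $\pm$ the $2\times 2$ minor $\partial(Z,U)/\partial(z,u)$. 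The heart of the matter is therefore to show $\det\frac{\partial(Z,U)}{\partial(z,u)}=1$ (with the correct sign). Here one uses that $Z=xz+y^2u+yP(z)$, so $\partial Z/\partial z=x+yP'(z)$ and $\partial Z/\partial u=y^2$, and one must differentiate the somewhat unwieldy expression for $U$. I expect the clean way is to write $w:=z+\frac{y(yu+P(z))}{x}=z+\frac{Z-xz}{x}\cdot\frac{1}{1}$—more simply $w=\frac{Z}{x}+\bigl(z-\frac{xz}{x}\bigr)$, i.e. to recognize $xw=Z+ (\text{lower order})$—and to check that $U=\frac{u}{x}-\frac{P(w)-P(z)}{xy}$ has been engineered so that the resulting $2\times 2$ determinant telescopes to $1$.

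The cleaner and more conceptual alternative, which I would actually pursue to control the algebra, is to construct the inverse map explicitly; this simultaneously proves bijectivity on $\A^4\setminus\{x=0\}$ and confirms that $\varphi_{P,n}$ is a birational automorphism there, with the Jacobian claim following since an everywhere-defined automorphism of $\A^4\setminus\{x=0\}$ together with a polynomial formula forces the Jacobian to be a nonvanishing function, and a direct check pins it to $1$. To invert, from $X,Y,Z$ one recovers $x=X$ and then $y=Y-x^nZ$; the definition of $w$ and $U$ is precisely arranged so that $u=x\,U+\frac{P(w)-P(z)}{y}$ can be solved, and one checks that $z$ is recovered from $Z=xz+y(yu+P(z))$ once $w$ is known, because $w=z+\frac{Z-xz}{x}$ determines $xz$ and hence $z$. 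The main obstacle I anticipate is the bookkeeping around the difference quotient $\frac{P(w)-P(z)}{w-z}$: one must verify that this quantity, and the expressions built from it, lie in $\kk[x,x^{-1},y,z,u]$ (no spurious denominators in $y$) and that the inverse is genuinely a morphism on $\A^4\setminus\{x=0\}$. Since $P$ is an arbitrary polynomial, I would handle the difference quotient via the identity $P(w)-P(z)=(w-z)\widetilde{P}(w,z)$ for a polynomial $\widetilde{P}$, and track that $w-z$ carries a factor making all denominators powers of $x$ only, which is exactly what allows the restriction to $\A^4\setminus\{x=0\}$ to be an isomorphism.
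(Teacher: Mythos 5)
Your Jacobian argument is sound in outline and, although you leave the decisive step as an expectation (``telescopes to $1$''), it does close: after expanding along the $x$-row and clearing the $Y$-row via $Y=y+x^nZ$ (legitimate, since $x^n$ has zero derivative with respect to $y,z,u$), one is left with $\det\frac{\partial(Z,U)}{\partial(z,u)}$. Using your own observation in its exact form, namely $w=z+\frac{y(yu+P(z))}{x}=\frac{Z}{x}$ \emph{identically}, one has $U=\frac{u}{x}-\frac{P(Z/x)-P(z)}{xy}$, whence $\frac{\partial U}{\partial u}=\frac{1}{x}-\frac{yP'(Z/x)}{x^2}$ and $\frac{\partial U}{\partial z}=-\frac{1}{xy}\bigl(\frac{P'(Z/x)(x+yP'(z))}{x}-P'(z)\bigr)$; in the $2\times2$ determinant all terms involving $P'(Z/x)$ cancel and the value is exactly $1$.

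The genuine gap is in your inversion, which is the part of your plan that carries the ``restricts to an automorphism'' claim. Precisely because $w=\frac{Z}{x}$ identically, your step ``$w=z+\frac{Z-xz}{x}$ determines $xz$ and hence $z$'' is vacuous: $w$ depends only on $Z$ and $x$ and carries no information about $z$. Your recovery order is moreover circular, since solving $u=xU+\frac{P(w)-P(z)}{y}$ already presupposes $z$. The repair is to eliminate $u$ between the two equations: from $xyU=yu-\bigl(P(Z/x)-P(z)\bigr)$ one gets $y^2u=xy^2U+y\bigl(P(Z/x)-P(z)\bigr)$, and substituting this into $Z=xz+y^2u+yP(z)$ makes the $yP(z)$ terms cancel, giving $z=\frac{Z}{x}-y^2U-\frac{y}{x}P(Z/x)$; only then does one set $u=xU+\frac{P(Z/x)-P(z)}{y}$, which lies in $\kk[x^{\pm1},y,Z,U]$ because $\frac{Z}{x}-z=y\bigl(yU+\frac{1}{x}P(Z/x)\bigr)$ is divisible by $y$, so your difference-quotient identity applies (here $y=Y-x^nZ$, so everything is regular on $\{x\neq0\}$). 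With this fix your proof works and is genuinely different from the paper's: there, $\varphi_{P,n}$ is factored as a composition of four elementary birational maps, each fixing $y$ and each of Jacobian determinant $1$, $y$ or $1/y$ with product $1$, so that both the Jacobian value and the regularity of the inverse on $\{x\neq 0\}$ are read off from the factors rather than computed by hand; your route costs more computation but needs no clever factorisation.
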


\begin{proof}One first checks by a straightforward computation that $\varphi_{P,n}$ is equal to the composition
\[\varphi_{P,n}=\varphi_4\circ\varphi_3\circ\varphi_2\circ\varphi_1\]
 of the following four birational transformations of $\A^4$:
\[\begin{array}{rcl}
\varphi_4\colon (x,y,z,u)&\mapsto &(x,y+x^nz,z,u),\\
\varphi_3\colon(x,y,z,u)& \mapsto & (x,y,z,y^{-1}(u-x^{-1}P(x^{-1}z))),\\
\varphi_2\colon(x,y,z,u)& \mapsto & (x,y,xz+yu,x^{-1}u),\\
\varphi_1\colon(x,y,z,u)& \mapsto & (x,y,z,yu+P(z)).
\end{array}\]
Since the Jacobian determinants of $\varphi_1,\varphi_2,\varphi_3,\varphi_4$ are equal to $1$, $1/y$, $1$, $y$, respectively and since all these maps fix $y$, it follows that the Jacobian determinant of $\varphi_{P,n}$ is equal to $1$. Moreover, since all components of $\varphi_{P,n}$ belong to $\kk[x^{\pm1},y,z,u]$ (remark that the numerator of last component's last summand is indeed divisible by $y$), all components of its inverse belong to $\kk[x^{\pm1},y,z,u]$ as well.
\end{proof}

As an immediate consequence, we get the following statement.

\begin{corollary}
Every map $\pi_{P,n}\colon\A^4\to\A^2$ is an $\A^2$-fibration and restricts to a trivial $\A^2$-bundle over the open set $(\A^1\setminus \{0\})\times \A^1$.
\end{corollary}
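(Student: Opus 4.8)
The plan is to derive both assertions directly from Lemma~\ref{Lemma:triviality over Ua}, which supplies an automorphism $\varphi_{P,n}$ of $\A^4\setminus\{x=0\}$ whose first two components are exactly $x$ and $v_{P,n}$. First I would identify the preimage $\pi_{P,n}^{-1}(U_a)$, where $U_a=(\A^1\setminus\{0\})\times\A^1$, with the open set $\A^4\setminus\{x=0\}$; this is immediate since the first component of $\pi_{P,n}$ is the coordinate $x$. Writing $\mathrm{pr}_{12}\colon\A^4\to\A^2$ for the projection onto the first two coordinates, I would note that $\mathrm{pr}_{12}$ restricts to the trivial $\A^2$-bundle $U_a\times\A^2\to U_a$ (identifying a point over $U_a$ with its first two and its last two coordinates).

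The key point is then the factorisation $\pi_{P,n}=\mathrm{pr}_{12}\circ\varphi_{P,n}$ on $\A^4\setminus\{x=0\}$, which holds precisely because the first two components of $\varphi_{P,n}$ are $(x,v_{P,n})$, i.e.\ the two components of $\pi_{P,n}$. Since $\varphi_{P,n}$ is an automorphism of $\A^4\setminus\{x=0\}$ by the lemma, this identity exhibits the restriction of $\pi_{P,n}$ over $U_a$ as the composite of an isomorphism with the trivial projection $\mathrm{pr}_{12}$, hence as a trivial $\A^2$-bundle. This settles the second assertion.

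For the first assertion, it remains only to check the fibres over the line $\{x=0\}$ of the target, as the preceding step already shows that every fibre over $U_a$ is isomorphic to $\A^2$. Here I would use that $n\ge 1$: setting $x=0$ in $v_{P,n}=y+x^n\bigl(xz+y(yu+P(z))\bigr)$ leaves $v_{P,n}=y$, so the fibre of $\pi_{P,n}$ over a point $(0,t)$ is the plane $\{x=0,\ y=t\}$ with free coordinates $z,u$, isomorphic to $\A^2$. The open set $U_a$ and the line $\{x=0\}$ together cover the target $\A^2$, so every fibre is isomorphic to $\A^2$ and $\pi_{P,n}$ is an $\A^2$-fibration.

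There is essentially no hard step, the substance having gone into the lemma; the only points needing care are verifying that the factorisation $\pi_{P,n}=\mathrm{pr}_{12}\circ\varphi_{P,n}$ is legitimate as a statement about morphisms on $\A^4\setminus\{x=0\}$ (rather than merely rational maps), and observing that the fibre computation over $\{x=0\}$ genuinely relies on the hypothesis $n\ge 1$ so that the term $x^n(\cdots)$ vanishes.
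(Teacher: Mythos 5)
Your proof is correct and is essentially the paper's own argument: the paper states this corollary as an immediate consequence of Lemma~\ref{Lemma:triviality over Ua}, the implicit reasoning being exactly your factorisation $\pi_{P,n}=\mathrm{pr}_{12}\circ\varphi_{P,n}$ over $\{x\neq 0\}$ together with the observation that $v_{P,n}\equiv y\pmod{x}$ (using $n\ge 1$) makes every fibre over the line $x=0$ the plane $\{x=0,\,y=t\}\cong\A^2$. No gaps or discrepancies.
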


Another worth mentioning consequence of Lemma \ref{Lemma:triviality over Ua} is the fact that $v_{P,n}$ is a $1$-stable variable. This can be shown by a general argument due to El Kahoui and Ouali \cite{KO} (see also \cite{Fre}). 

\begin{proposition}\label{Prop:variables_stables}Every polynomial $v_{P,n}$ is a $\kk[x]$-variable of $\kk[x,y,z,u,t]$, where $t$ denotes a new indeterminate.
\end{proposition}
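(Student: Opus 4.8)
The plan is to deduce the statement from the general criterion for stable coordinates due to El Kahoui and Ouali \cite{KO} (see also \cite{Fre}): if $R$ is a commutative ring, $f\in R[Y_1,\dots,Y_n]$, and there is an element $s\in R$ such that $f$ is an $R_s$-coordinate of $R_s[Y_1,\dots,Y_n]$ and such that the reduction $f\bmod s$ is an $(R/sR)$-coordinate of $(R/sR)[Y_1,\dots,Y_n]$, then $f$ becomes a coordinate after adjoining a single new variable, i.e.~it is a $1$-stable $R$-coordinate. I would apply this with $R=\kk[x]$, $s=x$, $n=3$, the three indeterminates being $y,z,u$, and $f=v_{P,n}$. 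Thus the whole proof reduces to checking the two localised hypotheses and invoking the criterion.

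First I would record the localisation at $s=x$. By Lemma~\ref{Lemma:triviality over Ua}, the birational map $\varphi_{P,n}$ restricts to an automorphism of $\A^4\setminus\{x=0\}=\Spec\kk[x^{\pm1}][y,z,u]$; its comorphism is therefore an automorphism $\sigma$ of $\kk[x^{\pm1}][y,z,u]$ that fixes $x$ and sends the indeterminate $y$ to the second component $v_{P,n}$ of $\varphi_{P,n}$. Hence $v_{P,n}=\sigma(y)$ is the image of a coordinate under a $\kk[x^{\pm1}]$-automorphism, so $v_{P,n}$ is a $\kk[x,x^{-1}]$-coordinate of $\kk[x^{\pm1}][y,z,u]$. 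This is exactly the first hypothesis. For the second, I would reduce modulo $x$: since $v_{P,n}=y+x^n\bigl(xz+y(yu+P(z))\bigr)$ with $n\ge 1$, one has $v_{P,n}\equiv y\pmod{x}$, and $y$ is trivially a coordinate of $(\kk[x]/(x))[y,z,u]=\kk[y,z,u]$. Applying the El Kahoui--Ouali criterion then yields that $v_{P,n}$ is a $1$-stable $\kk[x]$-coordinate, that is, a $\kk[x]$-variable of $\kk[x,y,z,u,t]$, as claimed.

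I expect the only genuine input to be Lemma~\ref{Lemma:triviality over Ua}: once the two local trivialisations are in place, the conclusion is formal. For orientation I would recall the mechanism behind the criterion. Over $\kk[x^{\pm1}]$ one has the explicit trivialising automorphism $\sigma$, whereas over $\kk[x]/(x)$ the trivialisation is the identity; the extra indeterminate $t$ supplies precisely the room needed to interpolate between these two charts and to absorb the negative powers of $x$ occurring in $\sigma$ and $\sigma^{-1}$, producing an honest $\kk[x]$-automorphism of $\kk[x,y,z,u,t]$ carrying $t$ to $v_{P,n}$. The fact that $\varphi_{P,n}$ has Jacobian determinant $1$ (again Lemma~\ref{Lemma:triviality over Ua}) is what keeps the interpolation an automorphism rather than a merely birational map and ensures that one single new variable is enough. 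The main --- and essentially only --- obstacle, namely constructing the localised trivialisation, is therefore already resolved by the explicit factorisation of $\varphi_{P,n}$ given in Lemma~\ref{Lemma:triviality over Ua}; the verification of the two hypotheses is then routine.
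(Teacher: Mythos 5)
Your proof is correct and is essentially the paper's own approach: the paper's proof \emph{is} the El Kahoui--Ouali argument you invoke, just carried out explicitly rather than cited --- it conjugates the translation $y\mapsto y+x^s t$ by the $\kk[x^{\pm1}]$-trivialisation coming from Lemma~\ref{Lemma:triviality over Ua}, shows that for $s$ large enough the resulting map restricts to a $\kk[x]$-automorphism of $\kk[x,y,z,u,t]$ sending $v_{P,n}$ to $v_{P,n}+x^s t$, and concludes via Lemma~\ref{lemme:variables_de_longueur2}. One small point to tighten: the result in \cite{KO} is formulated for residual coordinates, so you should note that your two hypotheses (coordinate over $\kk[x^{\pm1}]$, and coordinate modulo $x$) imply that $v_{P,n}$ is a residual coordinate of $\kk[x][y,z,u]$, after which the citation applies.
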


\begin{proof}We shall construct an automorphism $\Psi\in \Aut_{\kk[x]}(\kk[x,y,z,u,t])$ which maps $v_{P,n}$ onto \[w_s=v_{P,n}+x^st=y+x^st+x^{n}(xz+y^2u+yP(z)),\]
where $s$ denotes a suitable positive integer. The proposition will follow, since it is easy to check (see for example Lemma~\ref{lemme:variables_de_longueur2} below), that $w_s$ is a $\kk[x,z,u]$-variable of the ring $\kk[x,z,u][y,t]$.

The construction of $\Psi$ involves the rational map $\varphi=\varphi_{P,n}$ defined in Lemma~\ref{Lemma:triviality over Ua}. Since $\varphi$   restricts to an automorphism  outside the hyperplane $x=0$, its comorphism  $\varphi^*\colon\kk[x^{\pm1},y,z,u]\to\kk[x^{\pm1},y,z,u]$, $Q\mapsto Q\circ\varphi$, is a $\kk[x^{\pm1}]$-automorphism of the ring $\kk[x^{\pm1},y,z,u]$.  We denote by $F$ the extension of $\varphi^*$ as a $\kk[x^{\pm 1},t]$-automorphism of $\kk[x^{\pm 1},y,z,u,t]$. Note  that $F(y)=v_{P,n}$.

For each $\xi \in \k[x^{\pm 1},t]$, we  denote by $H_\xi$ the  $\kk[x^{\pm 1},z,u,t]$-automorphism of $\kk[x^{\pm 1},y,z,u,t]$  defined by $H_\xi(y)=y+\xi$, and by $\Phi_\xi$ the $\kk[x^{\pm 1},t]$-automorphism of $\kk[x^{\pm 1},y,z,u,t]$  defined by  $\Phi_\xi=F\circ H_\xi\circ F^{-1}$. Note that $\Phi_{\xi}(v_{P,n})=v_{P,n}+\xi$ by construction.

Observe that all elements $\Phi_\xi(y), \Phi_\xi(z), \Phi_\xi(u), \Phi_\xi(t)\in\kk[x^{\pm 1},y,z,u,t]$ depend polynomially on $\xi$. Moreover, since $\Phi_0$ is the identity, their coefficients of degree $0$ in $\xi$ are $y,z,u,t$, respectively. Therefore, choosing $\xi=x^s\cdot t$ for a large enough integer $s$, we obtain that $\Phi_{x^s\cdot t}(\k[y,z,u,t])\subseteq \k[x,y,z,u,t]$. Since $\Phi_{x^s\cdot t}$ is of Jacobian determinant  $1$, this implies  that $\Phi_{x^s\cdot t}$ restricts to a $\k[x]$-automorphism of $\k[x,y,z,u,t]$. Recall that $\Phi_{x^s\cdot t}(v_{P,n})=w_s$ by construction. This concludes the proof.
\end{proof}

We now proceed with the proof of Theorem~\ref{Theorem:fcs_de_transition}.

\begin{proof}[Proof of Theorem~$\ref{Theorem:fcs_de_transition}$]
Let $m$ be an integer such that $mn>\deg(P)$ and denote by $F_{P,n,m}$ the birational map of $\A^4$ defined by
\begin{align*}F_{P,n,m}(x,y,z,u)&=\left(x,y,z,u+\frac{z}{xy^2}-\frac{1}{xy^m}\cdot \frac{y^m-(x^nz)^m}{y-x^nz}P(\frac{z}{x})\right)\\&=(x,y,z,u+f_{P,n,m}(x,y,z)).\end{align*}
What we actually only need to prove is that the composition $F_{P,n,m}^{-1}\circ \varphi_{P,n}$, where $\varphi_{P,n}$ is given as in Lemma~$\ref{Lemma:triviality over Ua}$, restricts to an isomorphism between $\A^4\setminus \{v_{P,n}=0\}$ and $\A^4\setminus \{y=0\}$. 
For simplicity, let us denote $v=v_{P,n}$, $\varphi=\varphi_{P,n}$ and $F=F_{P,n,m}$.

Note that 
\[F^{-1}(x,y,z,u)=\left(x,y,z,u-\frac{z}{xy^2}+\frac{1}{xy^m}\cdot \frac{y^m-(x^nz)^m}{y-x^nz}P(\frac{z}{x})\right).\]
In particular, the first two components of $T^{-1}\circ \varphi$ and $T^{-1}$ are equal to $x$ and $v$, respectively. Let us denote their common third component by $$\omega=\frac{v-y}{x^n}=xz+y(uy+P(z)).$$ In order to prove the proposition, we only need to show that the last component of $F^{-1}\circ \varphi$ is an element of $\kk[x,y,z,u,v^{-1}]$. Indeed, since $F^{-1}\circ \varphi$ is a birational map of $\A^4$ whose second component is equal to $v$, this will imply that $T^{-1}\circ \varphi$ induces an isomorphism from $\A^4\setminus \{v=0\}$ to $\A^4\setminus \{y=0\}$.

Remark that the last component of $F^{-1}$ is an element of $\kk[x^{\pm1},y^{\pm1},z,u]$. Therefore, since the last component of 
\[\begin{array}{rcl}
\varphi(x,y,z,u)=\left(x,v, \omega ,\dfrac{1}{x}u+\dfrac{P(z)-P(\omega/x)}{xy}\right)
\end{array}\]
 is in fact an element of $\kk[x^{\pm 1},y,u,z]$, we only need to show that $x$ does not appear in the denominator of the last component of $T^{-1}\circ \varphi$.

This component is equal to
\begin{align*}
(F^{-1}\circ \varphi)^*(u)&=\varphi^{*}\left(u-\frac{z}{xy^2}+\frac{1}{xy^m}\cdot \frac{y^m-(x^nz)^m}{y-x^nz}P(\frac{z}{x})\right)\\
&=\frac{1}{x}u+\frac{P(z)-P(\omega/x)}{xy}-\frac{\omega}{xv^2}+\frac{1}{xv^m}\frac{v^m-(x^n\omega)^m}{y}P(\omega/x)\\
&=\frac{1}{x}u+\frac{P(z)}{xy}-\frac{\omega}{xv^2}-\frac{1}{xv^m}\frac{(x^n\omega)^m}{y}P(\omega/x)\\
&=\frac{1}{xyv^2}\cdot\left(uv^2y+v^2P(z)-\omega y\right)-\frac{\omega^m}{yv^m}\cdot x^{mn-1}P(\omega/x).
\end{align*}
Since we took $m$ such that $mn>\deg(P)$, we only need to check that
$$uv^2y+v^2P(z)-\omega y\equiv0\pmod{x}.$$
For this, we use the fact that $v\equiv y$ and $\omega\equiv y(uy+P(z))$ modulo $(x)$ and find
\[uv^2y+v^2P(z)-\omega y\equiv uy^3+y^2P(z)-y^2(uy+P(z))\equiv 0\pmod{x}.\]
\end{proof}

\begin{example}\label{Exa:FforVenereau}
Recall that the $n$-th V\'en\'ereau polynomial is defined by
\[V_n=v_{z^2,n}=y+x^n(xz+y^2u+yz^2).\] By Theorem~\ref{Theorem:fcs_de_transition} above, the map $\A^4\setminus\A^2\to\A^2_*$, $(x,y,z,u)\mapsto(x,V_n)$ is an $\A^2$-bundle isomorphic to $\rho_{f_n}\colon X_{f_n}\to \A^2_*$, where
\[f_n=\frac{x}{ab^2}-\frac{1}{ab^m}\cdot \frac{b^m-(a^nx)^m}{b-a^nx}\left(\frac{x}{a}\right)^2,\] with $m$ such that $mn>2$.
If $n\geq3$, we can choose $m=1$ and we get 
 \[f_n=\frac{x}{ab^2}-\frac{1}{ab^1}\cdot \frac{b^1-(a^nx)^1}{b-a^nx}\cdot\frac{x^2}{a^2}=\dfrac{x}{ab^2}-\dfrac{x^2}{a^3b} \quad \text{ when }n\ge 3.\]
 For $n=2$, we can choose $m=2$ and get
\begin{align*}f_2=\frac{x}{ab^2}-\frac{1}{ab^2}\cdot \frac{b^2-(a^2x)^2}{b-a^2x}\cdot\frac{x^2}{a^2}&=\frac{x}{ab^2}-\frac{1}{ab^2}\cdot (b+a^2x)\cdot\frac{x^2}{a^2}\\&= \dfrac{x}{ab^2}-\dfrac{x^2}{a^3b}-\dfrac{x^3}{ab^2}.\end{align*}
For $n=1$, we choose $m=3$ and get
\begin{align*} f_1=\frac{x}{ab^2}-\frac{1}{ab^3}\cdot \frac{b^3-(ax)^3}{b-ax}\cdot\frac{x^2}{a^2}&=\frac{x}{ab^2}-\frac{1}{ab^3}\cdot (b^2+axb+a^2x^2)\cdot\frac{x^2}{a^2}\\&=\dfrac{x}{ab^2}-\dfrac{x^2}{a^3b}-\dfrac{x^3}{a^2b^2}-\dfrac{x^4}{ab^3}.\end{align*}
These functions are exactly those computed by Kaliman and Zaidenberg in \cite[Proposition 2]{KZ2004}.
\end{example}

\begin{example}\label{Ex:f(x)_for_m=1,2}We now consider   the simple cases where $n>\deg(P)$ and $n=\deg(P)$.
\begin{enumerate}
\item If $n>\deg(P)$, then  we can choose $m=1$ in Theorem~\ref{Theorem:fcs_de_transition}. Hence, the restriction of $\pi_{P,n}$ to $\A^2_*$ is isomorphic to $\rho_f\colon X_f\to \A^2_*$ with
\[f(x)=\frac{x}{ab^2}-\frac{1}{ab^1}\cdot \frac{b^1-(a^nx)^1}{b-a^nx}P(\frac{x}{a})=\frac{x}{ab^2}-\frac{1}{ab}P(\frac{x}{a}).\] 
\item If $n=\deg(P)$, then we can choose $m=2$ and the map $(x,y,z,u)\mapsto(x,v_{P,n})$ has the structure of an $\A^2$-bundle over $\A^2_*$ isomorphic to $\rho_{f}\colon X_f\to \A^2_*$ with
\begin{align*}f(x)&=\frac{x}{ab^2}-\frac{1}{ab^2}\cdot \frac{b^2-(a^nx)^2}{b-a^nx}P(\frac{x}{a})\\
&=\frac{x}{ab^2}-\frac{1}{ab^2}\cdot (b+a^nx)P(\frac{x}{a})\\
&=\frac{x}{ab^2}-\frac{1}{ab}P(\frac{x}{a})-\frac{a^{n-1}}{b^2}xP(\frac{x}{a}).
\end{align*} 
\end{enumerate}
\end{example}

\section{Bivariables and their relationship with trivial \texorpdfstring{$\A^1$}{A1}-bundles -- the proof of Theorem~\ref{Theorem:BijectionBivariables}}\label{Sec:Bivariables}

We start by giving an easy result which was already noticed   in \cite{KZ2004}. 
\begin{lemma}\label{Lem:Equivalenttrivialities}
Let $n\ge 1$ and let $P\in \k[z]$. Then, the following statements are equivalent:
\begin{enumerate}
\item\label{visxvariable} The polynomial $v_{P,n}$ is a $\k[x]$-variable of $\kk[x,y,z,u]$.
\item\label{piPntrivial} The morphism $\pi_{P,n}\colon\A^4\to\A^2$ is a trivial $\A^2$-bundle.
\item\label{piPntrivialorigin} There exists a neighbourhood of the origin in $\A^2$ over which $\pi_{P,n}$ is a trivial $\A^2$-bundle.
\item\label{piPntrivialoutsideorigin} The restriction of the $\A^2$-fibration $\pi_{P,n}$ over the punctured plane $\A^2_*$ is a trivial $\A^2$-bundle.
\end{enumerate}
\end{lemma}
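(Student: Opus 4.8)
The plan is to prove the four statements equivalent through the implications $(1)\Leftrightarrow(2)$, $(2)\Rightarrow(3)\Rightarrow(2)$ and $(2)\Rightarrow(4)\Rightarrow(2)$, treating $(1)\Leftrightarrow(2)$ first as a pure reformulation. Writing $v=v_{P,n}$, the morphism $\pi_{P,n}$ has components $(x,v)$, so a trivialisation of $\pi_{P,n}$ as an $\A^2$-bundle over $\A^2$ is exactly a choice of $p,q\in\kk[x,y,z,u]$ making $(x,v,p,q)$ a system of coordinates of $\A^4$; since $x$ is already one of them, this is the same as $(v,p,q)$ being a system of $\kk[x]$-coordinates of $\kk[x][y,z,u]$, i.e.\ as $v$ being a $\kk[x]$-variable. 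This gives $(1)\Leftrightarrow(2)$, while $(2)\Rightarrow(3)$ and $(2)\Rightarrow(4)$ are immediate, the restriction of a trivial bundle to any open subset remaining trivial.

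For $(3)\Rightarrow(2)$ I would first record, using Theorem~\ref{Theorem:fcs_de_transition}, that the restriction of $\pi_{P,n}$ over $\A^2_*$ is isomorphic to $\rho_f\colon X_f\to\A^2_*$, hence (by the very construction of $X_f$ in Definition~\ref{Defi:Xf}) is a trivial $\A^2$-bundle over each of $U_a$ and $U_b$. Assuming $(3)$, a neighbourhood $V$ of the origin over which $\pi_{P,n}$ is trivial, together with $U_a$ and $U_b$, covers all of $\A^2$, so $\pi_{P,n}$ is a locally trivial $\A^2$-bundle over $\A^2$. The Bass--Connell--Wright theorem then identifies it with a vector bundle over $\A^2$, and the Quillen--Suslin theorem forces that bundle to be trivial, giving $(2)$.

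The substance of the lemma is the implication $(4)\Rightarrow(2)$, which I would establish directly. The fibre $\pi_{P,n}^{-1}(0,0)$ is cut out by $x=v=0$, and since $v\equiv y\pmod{x}$ this locus equals $Z:=\{x=y=0\}\subset\A^4$, a closed subset of codimension $2$; thus $\pi_{P,n}^{-1}(\A^2_*)=\A^4\setminus Z$. A trivialisation as in $(4)$ is an isomorphism $\Phi\colon\A^4\setminus Z\iso\A^2_*\times\A^2$ over $\A^2_*$, necessarily of the form $\Phi=(x,v,\phi_1,\phi_2)$ with $\phi_1,\phi_2$ regular on $\A^4\setminus Z$; its target is the complement in $\A^4$ of the codimension-$2$ locus $W$ where the first two coordinates vanish. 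Because $\A^4$ is normal and $Z$ has codimension $2$, algebraic Hartogs extends $\phi_1,\phi_2$ to regular functions $\bar\phi_1,\bar\phi_2$ on all of $\A^4$, yielding a polynomial endomorphism $\Psi=(x,v,\bar\phi_1,\bar\phi_2)\colon\A^4\to\A^4$ that agrees with $\Phi$ away from $Z$.

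It then remains to promote the birational morphism $\Psi$ to a genuine automorphism, and this is the step I expect to be the main obstacle, since a dominant polynomial endomorphism of $\A^4$ need not be invertible. The key extra input is that $\Psi$ restricts to an \emph{isomorphism} $\A^4\setminus Z\iso\A^4\setminus W$ onto another codimension-$2$ complement. Its inverse is a morphism $\A^4\setminus W\to\A^4$, which by Hartogs again extends to a morphism $\nu\colon\A^4\to\A^4$; the compositions $\Psi\circ\nu$ and $\nu\circ\Psi$ agree with the identity on a dense open set, hence everywhere, so $\Psi$ is an automorphism. Finally, reading $\Psi$ as the $\kk$-algebra endomorphism of $\kk[x,y,z,u]$ fixing $x$ and sending $y\mapsto v$, its inverse is a $\kk[x]$-automorphism carrying $v$ to the variable $y$; equivalently $(x,v,\bar\phi_1,\bar\phi_2)$ is a system of coordinates, which is exactly statement $(2)$. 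This closes the required implications and proves the four statements equivalent.
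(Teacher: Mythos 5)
Your proposal is correct and follows essentially the same route as the paper: the same reformulation for (1)$\Leftrightarrow$(2), the same use of Theorem~\ref{Theorem:fcs_de_transition} together with Bass--Connell--Wright and Quillen--Suslin for (3)$\Rightarrow$(2), and the same codimension-$2$ (Hartogs) argument for (4)$\Rightarrow$(2), which the paper phrases ring-theoretically by noting that $\mathcal{O}(\A^4\setminus\{x=v_{P,n}=0\})=\kk[x,y,z,u]$ while you spell it out by extending the trivialisation and its inverse to morphisms of $\A^4$ and comparing compositions. The two formulations are equivalent, so nothing further is needed.
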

\begin{proof}
Asking that $\pi_{P,n}\colon\A^4\to\A^2$ is a trivial $\A^2$-bundle means exactly that there exist $r,s\in\k[x,y,z,u]$ such that the map
 \[\A^4\to \A^2\times \A^2=\A^4, (x,y,z,u)\mapsto(x,v_{P,n},r(x,y,z,u),s(x,y,z,u))\]
is an isomorphism, i.e.~ such that the equality $\k[x,y,z,u]=[x,v_{P,n},r,s]$ holds. Hence, the statements  \ref{visxvariable} and \ref{piPntrivial} of the lemma are equivalent.

If the morphism $\pi_{P,n}\colon\A^4\to\A^2$ is a trivial $\A^2$-bundle, then its restriction to every subset of $\A^2$ is trivial. Therefore,   Assertion \ref{piPntrivial} implies both  \ref{piPntrivialorigin}  and  \ref{piPntrivialoutsideorigin}. It remains to prove the converse implications.

By Theorem~\ref{Theorem:fcs_de_transition}, the map $\pi_{P,n}$ defines a locally trivial $\A^2$-bundle over the punctured plane.  Hence, if assertion \ref{piPntrivialorigin} is true, then $\pi_{P,n}$ is a locally trivial $\A^2$-bundle over the whole $\A^2$, which is  isomorphic to a vector bundle by Bass-Connell-Wright Theorem \cite{BCW} and is furthermore trivial by Quillen-Suslin Theorem \cite{Qui,Sus}. This shows that Assertion  \ref{piPntrivialorigin} implies \ref{piPntrivial}.

Under Assertion $\ref{piPntrivialoutsideorigin}$, there exist $r,s\in\k(x,y,z,u)$  such that the  map
 \[\A^4\setminus \{(x=v_{P,n}=0)\} \to \A^2_*\times \A^2, (x,y,z,u)\mapsto(x,v_{P,n},r(x,y,z,u),s(x,y,z,u))\]
 is an isomorphism. Since the locus where $x=v_{P,n}=0$  has codimension $2$ in $\A^4$ (as it is in fact the surface defined by the  equations $x=y=0$ in $\A^4$), the ring of regular functions on $\A^4\setminus \{(x=v_{P,n}=0)\}=\A^2_*\times \A^2$ is equal to the whole ring $\k[x,y,z,u]$. Hence, the map above induces an automorphism of $\k[x,y,z,u]$. In particular, we have that  $r,s\in \k[x,y,z,u]$ and $\k[x,y,z,u]=\k[x,v_{P,n},r,s]$. This proves that $\ref{piPntrivialoutsideorigin}$ implies $\ref{piPntrivial}$.
\end{proof}

By Theorem~\ref{Theorem:fcs_de_transition}, the four conditions of Lemma~\ref{Lem:Equivalenttrivialities} are equivalent to the fact that the $\A^2$-bundle $\rho_f\colon X_f\to\A^2_*$, where $f\in\kk[a^{\pm1},b^{\pm1}][x]$ is explicitly given in the statement of Theorem~\ref{Theorem:fcs_de_transition}, is a trivial $\A^2$-bundle. 

 We recall the notation $U_a=\A^2\setminus\{a=0\}$, $U_b=\A^2\setminus\{b=0\}$ and $U_{ab}=U_a\cap U_b=\A^2\setminus \{ab=0\}$. Let us denote by $G_a$, $G_b$ and $G_{ab}$ the automorphism groups  of the $\A^2$-bundles  $U_a\times \A^2$, $U_b\times \A^2$ and $U_{ab}\times \A^2$, respectively. In particular, remark that $G_a$ and $G_b$ are subgroups of $G_{ab}$. In the sequel, we will abuse notation and denote an element \[((a,b),(x,y))\mapsto ((a,b),(F(a,b,x,y),G(a,b,x,y)))\] 
by $(F(a,b,x,y),G(a,b,x,y))$ or simply by $(F(x,y),G(x,y))$.  

\begin{lemma}\label{Lem:isomorphic-A2bundles}Let $f(x),g(x)\in\kk[a^{\pm 1},b^{\pm 1}][x]$. Then, the two $\A^2$-bundles $\rho_f$ and $\rho_{g}$ are isomorphic as $\A^2$-bundles if and only if there exist $\alpha\in G_a$ and $\beta\in G_b$ such that 
\[\alpha\circ (x,y+f(x))\circ \beta^{-1}=(x,y+g(x)).\]
In particular, the $\A^2$-bundle $\rho_f$ is isomorphic to the trivial bundle if and only if there exist $\alpha\in G_a$ and $\beta\in G_b$ such that \[\alpha\circ \beta^{-1}=(x,y+f(x)).\]
\end{lemma}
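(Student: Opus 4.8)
The plan is to show that an isomorphism of $\A^2$-bundles over $\A^2_*$ is exactly the data of a pair of bundle automorphisms over $U_a$ and $U_b$ that conjugate one transition function into the other. Recall that $X_f$ is built by gluing $U_a\times\A^2$ and $U_b\times\A^2$ along $U_{ab}\times\A^2$ via the cocycle $\theta_f=(x,y+f(x))\in G_{ab}$, and similarly $X_g$ via $\theta_g=(x,y+g(x))$. An isomorphism of $\A^2$-bundles $\Phi\colon X_f\iso X_g$ commuting with the projections to $\A^2_*$ is, by the very construction of these glued varieties, the same thing as a pair of $\A^2$-bundle isomorphisms $\Phi_a\colon U_a\times\A^2\to U_a\times\A^2$ and $\Phi_b\colon U_b\times\A^2\to U_b\times\A^2$ (that is, elements $\alpha=\Phi_a\in G_a$ and $\beta=\Phi_b\in G_b$) whose restrictions to $U_{ab}\times\A^2$ are compatible with the two gluings. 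So the whole statement will reduce to writing down and interpreting that compatibility condition.

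First I would set up the gluing carefully, fixing the convention that a point of $X_f$ is represented either by coordinates in the $U_a$-chart or in the $U_b$-chart, identified over $U_{ab}$ by $\theta_f$. An isomorphism $\Phi$ is given in charts by $\alpha\in G_a$ and $\beta\in G_b$, and the requirement that these glue to a well-defined map $X_f\to X_g$ is the commutativity, over $U_{ab}$, of the square whose vertical arrows are the transition maps $\theta_f$ (source) and $\theta_g$ (target) and whose horizontal arrows are $\alpha$ and $\beta$. Concretely this is the cocycle relation
\[
\alpha\circ\theta_f=\theta_g\circ\beta
\quad\text{in } G_{ab},
\]
which, solving for $\theta_g$, is exactly
\[
\alpha\circ(x,y+f(x))\circ\beta^{-1}=(x,y+g(x)).
\]
I would then check both directions: given such $\alpha,\beta$ the two charts patch to an isomorphism $\Phi$, and conversely any bundle isomorphism $\Phi$ restricts in the two charts to elements of $G_a$ and $G_b$ satisfying this relation. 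The key point justifying that $\Phi$ must be given chart-wise by elements of $G_a$ and $G_b$ (rather than by maps that mix the charts) is that $\Phi$ is required to lie over the identity of $\A^2_*$, so it preserves each chart's image; here I would invoke that $U_a$ and $U_b$, together with the fact that $\Phi$ covers the identity, force $\Phi$ to send the $U_a$-chart to the $U_a$-chart.

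The ``in particular'' clause is the specialisation $g=0$: the bundle $X_0$ is the trivial bundle $\A^2_*\times\A^2$ because its transition function is the identity, and the relation becomes $\alpha\circ(x,y+f(x))\circ\beta^{-1}=\mathrm{id}$, i.e.\ $\alpha\circ\beta^{-1}=(x,y+f(x))$ after cancelling the factor $(x,y+f(x))$ against its inverse—more precisely, rewriting $\alpha\circ\theta_f=\theta_0\circ\beta=\beta$ as $\alpha=\beta\circ\theta_f^{-1}$, hence $\alpha\circ\beta^{-1}=\theta_f^{-1}$, and then re-indexing (replacing $\alpha,\beta$ by their inverses, which stay in $G_a,G_b$) to land on the stated form $\alpha\circ\beta^{-1}=(x,y+f(x))$. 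I expect the main obstacle to be purely bookkeeping: getting the direction of the cocycle relation and the placement of inverses right, and making the passage between ``well-defined glued morphism'' and ``cocycle identity in $G_{ab}$'' fully rigorous rather than merely plausible. Everything else is formal once the gluing convention is pinned down.
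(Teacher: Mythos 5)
Your proof is correct and is essentially the paper's own argument: the paper's proof consists precisely of the observation that an isomorphism of the glued bundles amounts to automorphisms $\alpha\in G_a$ and $\beta\in G_b$ of the two charts that are compatible with the gluings, i.e.\ the cocycle relation $\alpha\circ (x,y+f(x))=(x,y+g(x))\circ\beta$ in $G_{ab}$. One small bookkeeping correction in your ``in particular'' step: from $\alpha\circ (x,y+f(x))=\beta$ one gets $\beta^{-1}\circ\alpha=(x,y+f(x))^{-1}$, not $\alpha\circ\beta^{-1}=(x,y+f(x))^{-1}$ (the latter is only a conjugate of it), but your re-indexed conclusion $\alpha^{-1}\circ\bigl(\beta^{-1}\bigr)^{-1}=\alpha^{-1}\circ\beta=(x,y+f(x))$ with $\alpha^{-1}\in G_a$, $\beta^{-1}\in G_b$ is exactly the stated form.
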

\begin{proof}
Recall that the $\A^2$-bundles $\rho_f$ and $\rho_g$ are constructed by gluing $U_a\times \A^2$ and $U_b\times \A^2$  via the transition functions  $(x,y+f(x))\in G_{ab}$ and $(x,y+g(x))\in G_{ab}$, respectively. Hence, they are isomorphic as $\A^2$-bundles if and only if one can find automorphisms of the $\A^2$-bundles  $U_a\times \A^2$ and $U_b\times \A^2$ that are compatible with the gluing. This gives the result.\end{proof}

Investigating on the conditions of Lemma~\ref{Lem:isomorphic-A2bundles} for a specific example (for instance in the case corresponding to the first V\'enereau polynomial) is not a simple task. On the contrary, when we consider $X_f$ and $X_g$ as $\A^1$-bundles over $\A^2_{*}\times \A^1$ (via the projection onto the first three coordinates), it becomes easy to decide whether two bundles $X_f$ and $X_g$ are equivalent as $\A^1$-bundles.   


\begin{lemma}\label{Lemm:A1bundle}
For all $f,g\in \kk[a^{\pm 1},b^{\pm 1}][x]$, the following conditions are equivalent.
\begin{enumerate}
\item\label{A1bundle1}
The varieties $X_f$ and $X_{g}$ are isomorphic as $\A^1$-bundles.
\item\label{A1bundle2}
There exist $\tau_a\in \kk[a^{\pm 1},b][x,y]$ and $\tau_b\in \kk[a,b^{\pm 1}][x,y]$ such that $\alpha=(x,\tau_a)\in G_a$, $\beta=(x,\tau_b)\in G_b$ and \[\alpha\circ (x,y+f(x))\circ \beta^{-1}=(x,y+g(x)).\]
\item\label{A1bundle3}
There exist $r_a\in \kk[a^{\pm 1},b][x]$, $r_b\in \kk[a,b^{\pm 1}][x]$ and $\lambda \in \kk^*$ such that \[g(x)=\lambda f(x)+r_a(x)+r_b(x).\]
\end{enumerate}
\end{lemma}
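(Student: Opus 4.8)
The plan is to prove the chain of equivalences by treating $(1)\Leftrightarrow(2)$ as a formal statement about gluing $\A^1$-bundles and $(2)\Leftrightarrow(3)$ as an explicit computation with the transition functions. Throughout, the base of the $\A^1$-bundles is $\A^2_*\times \A^1$ with coordinates $a,b,x$, and the cover is $\{U_a\times \A^1, U_b\times \A^1\}$.

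For $(1)\Leftrightarrow(2)$, I would run exactly the gluing argument already used for Lemma~\ref{Lem:isomorphic-A2bundles}, but now in the category of $\A^1$-bundles over $\A^2_*\times \A^1$ rather than $\A^2$-bundles over $\A^2_*$. Both $X_f$ and $X_g$ are obtained by gluing the trivial $\A^1$-bundles over $U_a\times \A^1$ and $U_b\times \A^1$ by the translation cocycles $(x,y+f(x))$ and $(x,y+g(x))$, so an isomorphism of $\A^1$-bundles over the base amounts to a pair of isomorphisms over $U_a\times \A^1$ and $U_b\times \A^1$ that are compatible with these cocycles on the overlap $U_{ab}\times \A^1$. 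The one extra input needed is that any such isomorphism over $U_a\times \A^1$ is an automorphism of $R[y]$ over the domain $R=\kk[a^{\pm 1},b][x]$, hence affine in $y$; this puts it in the form $\alpha=(x,\tau_a)$ with $\tau_a=\lambda_a y+c_a$, $\lambda_a\in R^*$, $c_a\in R$, which is precisely an element of $G_a$ fixing $x$ (and symmetrically for $G_b$). Compatibility with the cocycles is exactly $\alpha\circ (x,y+f(x))\circ\beta^{-1}=(x,y+g(x))$, which gives $(1)\Leftrightarrow(2)$.

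For $(2)\Rightarrow(3)$, I would feed the affine forms $\tau_a=\lambda_a y+c_a$ and $\tau_b=\lambda_b y+c_b$ into the identity of $(2)$ and compare both sides as affine maps in $y$. The decisive point is a units computation: since $\kk[a^{\pm 1},b][x]$ and $\kk[a,b^{\pm 1}][x]$ are domains with unit groups $\kk^*\cdot a^{\mathbb Z}$ and $\kk^*\cdot b^{\mathbb Z}$, the leading coefficients satisfy $\lambda_a\in \kk^* a^{\mathbb Z}$ and $\lambda_b\in \kk^* b^{\mathbb Z}$. Tracking the $y$-coordinate through $\beta^{-1}$, the translation, and $\alpha$ shows that the composite has leading coefficient $\lambda_a\lambda_b^{-1}$, which must equal $1$; as $\lambda_a=\lambda_b$ then lies in $\kk^* a^{\mathbb Z}\cap \kk^* b^{\mathbb Z}=\kk^*$, both collapse to a single scalar $\lambda\in \kk^*$. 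The constant term of the composite then reads $g=\lambda f+c_a-c_b$, so $(3)$ holds with $r_a=c_a$ and $r_b=-c_b$. For the converse $(3)\Rightarrow(2)$, given $g=\lambda f+r_a+r_b$ I would simply set $\tau_a=\lambda y+r_a$ and $\tau_b=\lambda y-r_b$ and check by the same computation that $\alpha=(x,\tau_a)\in G_a$ and $\beta=(x,\tau_b)\in G_b$ satisfy the required identity.

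I expect the main obstacle to be precisely this units computation in $(2)\Rightarrow(3)$: one has to argue that matching the coefficient of $y$ in the cocycle identity forces the two fibrewise scalings $\lambda_a,\lambda_b$ to coincide and, because they live in the disjoint unit groups $\kk^* a^{\mathbb Z}$ and $\kk^* b^{\mathbb Z}$, to reduce to a single constant $\lambda\in \kk^*$. This is exactly what produces the scalar $\lambda$ in $(3)$ while ruling out any hidden powers of $a$ or $b$. Once it is established, the identification of $r_a,r_b$ and the verification of the converse are routine bookkeeping.
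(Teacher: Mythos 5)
Your proposal is correct and takes essentially the same route as the paper: $(1)\Leftrightarrow(2)$ by matching chart automorphisms compatible with the gluing cocycles, and $(2)\Leftrightarrow(3)$ by writing those automorphisms affinely in $y$ and observing that the equality of the $y$-coefficients forces the scaling factor to lie in $\kk^*a^{\mathbb{Z}}\cap\kk^*b^{\mathbb{Z}}=\kk^*$, with the converse given by the explicit affine maps $\tau_a=\lambda y+r_a$, $\tau_b=\lambda y-r_b$. The only (cosmetic) difference is that the paper justifies the affine form via non-vanishing Jacobian determinants, whereas you invoke the structure of $R$-algebra automorphisms of $R[y]$ over the domain $R$.
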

\begin{proof}The $\A^1$-bundle structures $X_f\to \A^2_{*}\times \A^1$ and $X_g\to \A^2_{*}\times \A^1$ are given by the restriction of the projection $\A^2_{*}\times \A^2\to \A^2_{*}\times \A^1, (a,b,x,y)\mapsto(a,b,x)$
on both charts $U_a\times \A^2$ and $U_b\times \A^2$. An isomorphism of $\A^1$-bundles between $X_f$ and $X_g$ is then given by $\alpha\in \Aut(U_a\times \A^2)$ and $\beta\in \Aut(U_b\times \A^2)$, both compatible with that projection. These automorphisms $\alpha$ and $\beta$ must also belong to $G_a$ and $G_b$, respectively. Moreover, their   first coordinate must be equal to $x$. This shows that Assertions \ref{A1bundle1} and \ref{A1bundle2} are equivalent.

Suppose now that  $\alpha,\beta$ are as in \ref{A1bundle2}. Since the Jacobian determinants of $\alpha$ and $\beta$ do not vanish on $U_a$ and $U_b$, respectively, they are of the form 
\[\alpha=(x,\mu_a y+r_a(x)) \text{ and } \ \beta^{-1}=(x,\mu_b y+r_b(x))\]
for some $\mu_a \in \kk[a^{\pm 1}]^*$, $\mu_b\in \kk[b^{\pm 1}]^*$, $r_a\in \kk[a^{\pm 1},b][x]$ and $r_b\in \kk[a,b^{\pm 1}][x]$. The equality $\alpha \circ (x,y+f(x))\circ \beta^{-1}=(x,y+g(x))$ is then equivalent to 
\[\mu_a\mu_b=1, r_a(x)+\mu_a r_b(x)+ \mu_a f(x) =g(x).\]
The above equalities  can only occur when $\mu_a=(\mu_b)^{-1}\in \kk^*$. Therefore, Assertion  $\ref{A1bundle2}$ implies Assertion  \ref{A1bundle3}. Finally, it is easy to check that \ref{A1bundle3} implies \ref{A1bundle2}, as we can construct suitable $\alpha$ and $\beta^{-1}$ given $\lambda,r_a,r_b$ such that $g(x)=\lambda f(x)+r_a(x)+r_b(x)$.
\end{proof}

\begin{remark}\label{Rem:intro-bivariables} Suppose that  $\rho_f\colon X_f\to\A^2_*$ is isomorphic to the trivial $\A^2$-bundle. Then, by Lemma~\ref{Lem:isomorphic-A2bundles}, there exist $\alpha\in G_a$ and $\beta\in G_b$ such that $\alpha=(x,y+f(x))\circ\beta$. In particular,   $\alpha$ and $\beta $   have the same first component, which  is in fact an element of $\kk[a,b][x,y]$. We call such an element a bivariable.
\end{remark}

\begin{definition}\label{Defi:bivariable}
We say that an element $\omega\in \kk[a,b][x,y]$ is a \emph{bivariable} if it both a $\kk[a^{\pm1},b]$-variable of $\kk[a^{\pm1},b,x,y]$ and a $\kk[a,b^{\pm1}]$-variable of $\kk[a,b^{\pm1},x,y]$. 
\end{definition}

\begin{example}\label{exple-bivariable0.0}
Every  $\k[a,b]$-variable of $\k[a,b,x,y]$ is a bivariable.
\end{example}

\begin{example}\label{exple-bivariable1.0}Let $m,n\geq1$ be positive integers. Then, the polynomial $\omega=a^mx+b^ny$ is a bivariable. Indeed, choosing $\tau_a=a^{-m}y$ and $\tau_b=-b^{-n}x$, we can define $\alpha=(\omega,\tau_a)\in G_a$ and  $\beta=(\omega,\tau_b)\in G_b$. We remark that  $\alpha\circ\beta^{-1}=(x,y+f(x))$ with $f(x)=\frac{x}{a^mb^n}$.
\end{example}

As  explained above, we can associate a bivariable to every trivial bundle  $\rho_f\colon X_f\to\A^2_*$. This motivates the following definition.

\begin{definition}\label{Defi:associated-bivariable}
We say that a bivariable $\omega\in\kk[a,b][x,y]$ trivialises the bundle $\rho_f\colon X_f\to\A^2_*$, $f(x)\in \kk[a^{\pm 1},b^{\pm 1}][x]$, if there exist elements $\tau_a$ and $\tau_b$   in $\kk[a^{\pm 1},b][x,y]$ and $\kk[a,b^{\pm 1}][x,y]$, respectively,  such that $\kk[a^{\pm1},b][\omega,\tau_a]=\kk[a^{\pm1},b]$, $\kk[a,b^{\pm1}][\omega,\tau_b]=\kk[a,b^{\pm1}][x,y]$ and $\tau_a=\tau_b+f(\omega)$. 
\end{definition}

\begin{remark}\label{Rem:trivialazed_bundles_are_trivial}
If a bivariable $\omega\in\kk[a,b][x,y]$ trivialises a bundle $\rho_f\colon X_f\to\A^2_*$ and if $\tau_a$, $\tau_b$ and $f$ are as in Definition~\ref{Defi:associated-bivariable}, then    \[\alpha\circ\beta^{-1}=(x,y+f(x)),\]
where $\alpha=(\omega,\tau_a)\in G_a$ and $\beta=(\omega,\tau_b)\in G_b$.
In particular, $\rho_f$ is isomorphic to the trivial $\A^2$-bundle.
\end{remark}

\begin{lemma}\label{lemme-def-tau_a-tau_b}
Every bivariable $\omega\in \kk[a,b][x,y]$ trivialises a bundle $\rho_f\colon X_f\to\A^2_*$ for some $f(x)\in \kk[a^{\pm 1},b^{\pm 1}][x]$.
\end{lemma}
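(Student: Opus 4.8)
The plan is to reconcile, over the Laurent ring $R=\kk[a^{\pm1},b^{\pm1}]$, the two coordinate complements of $\omega$ provided by the bivariable hypothesis, and then to absorb the discrepancy between them into suitable rescalings.

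First I would use the two halves of the definition of a bivariable to fix a $\tau_a'\in\kk[a^{\pm1},b][x,y]$ with $\kk[a^{\pm1},b][\omega,\tau_a']=\kk[a^{\pm1},b][x,y]$ and a $\tau_b'\in\kk[a,b^{\pm1}][x,y]$ with $\kk[a,b^{\pm1}][\omega,\tau_b']=\kk[a,b^{\pm1}][x,y]$ (such complements exist directly from the notion of variable). Localising the first equality at $b$ and the second at $a$, I obtain $R[\omega,\tau_a']=R[x,y]=R[\omega,\tau_b']$, so that $(\omega,\tau_a')$ and $(\omega,\tau_b')$ are two systems of coordinates on $R[x,y]$ sharing their first coordinate $\omega$. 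Regarding $R[x,y]$ as the one-variable polynomial ring $\bigl(R[\omega]\bigr)[\tau_a']$ over the domain $R[\omega]$, the element $\tau_b'$ is again a coordinate of this ring, hence of the affine form $\tau_b'=\mu\tau_a'+g(\omega)$ for some unit $\mu$ of $R[\omega]$ --- necessarily a unit of $R$ --- and some $g\in R[x]$.

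The crux is then to remove the unit $\mu$. Since $R^{*}=\kk^{*}\cdot\{a^ib^j:i,j\in\mathbb Z\}$, I write $\mu=\lambda a^ib^j$ and exploit the decisive factorisation $\mu=(\lambda a^i)\cdot b^{j}$, in which $\lambda a^i$ is a unit of $\kk[a^{\pm1},b]$ while $b^{j}$ is a unit of $\kk[a,b^{\pm1}]$. Accordingly I set $\tau_a:=\lambda a^i\tau_a'$ and $\tau_b:=b^{-j}\tau_b'$. Multiplication by a unit of $\kk[a^{\pm1},b]$ (resp.\ of $\kk[a,b^{\pm1}]$) preserves the property of being a coordinate complement, so $\tau_a$ and $\tau_b$ still satisfy $\kk[a^{\pm1},b][\omega,\tau_a]=\kk[a^{\pm1},b][x,y]$ and $\kk[a,b^{\pm1}][\omega,\tau_b]=\kk[a,b^{\pm1}][x,y]$. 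Substituting $\tau_b'=\lambda a^ib^j\tau_a'+g(\omega)$ into $\tau_b=b^{-j}\tau_b'$ gives $\tau_b=\lambda a^i\tau_a'+b^{-j}g(\omega)=\tau_a+b^{-j}g(\omega)$, whence $\tau_a=\tau_b+f(\omega)$ with $f(x):=-b^{-j}g(x)\in\kk[a^{\pm1},b^{\pm1}][x]$. By Definition~\ref{Defi:associated-bivariable} this exhibits $\omega$ as a trivialisation of $\rho_f\colon X_f\to\A^2_*$.

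I expect the main obstacle to be precisely this unit-matching step: the difference of two arbitrary complements carries both a multiplicative factor $\mu\in R^{*}$ and an additive $R[\omega]$-part, and producing a difference that is a genuine polynomial in $\omega$ with coefficients in $R$ (with no surviving $\tau_a'$-term) hinges on the fact that every unit of $R$ splits as a unit of $\kk[a^{\pm1},b]$ times a unit of $\kk[a,b^{\pm1}]$. The remaining verifications --- existence of the complements, the one-variable automorphism structure giving the affine form $\mu\tau_a'+g(\omega)$, and the invariance of complements under unit rescaling --- are routine.
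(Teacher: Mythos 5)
Your proof is correct and establishes exactly what Definition~\ref{Defi:associated-bivariable} requires. It has the same overall skeleton as the paper's proof --- take the two complements $\tau_a'$, $\tau_b'$ furnished by the bivariable hypothesis, pass to the Laurent ring $R=\kk[a^{\pm1},b^{\pm1}]$, and kill the unit discrepancy between them --- but the device used to kill the unit is different. The paper works with the automorphisms $\alpha=(\omega,\tau_a')\in G_a$ and $\beta=(\omega,\tau_b')\in G_b$ and normalises \emph{before} composing: $\Jac(\alpha)$ is a unit of $\kk[a^{\pm1},b]$, hence of the form $\lambda a^i$, and $\Jac(\beta)$ is a unit of $\kk[a,b^{\pm1}]$, hence of the form $\lambda' b^j$, so dividing each $\tau$ by the corresponding Jacobian makes both Jacobians equal to $1$; then $\alpha\circ\beta^{-1}$ fixes $x$ and has Jacobian $1$, hence is the shear $(x,y+f(x))$, which yields $\tau_a=\tau_b+f(\omega)$. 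You compose first: the affine relation $\tau_b'=\mu\tau_a'+g(\omega)$ comes from the elementary classification of coordinates of $A[t]$ over the domain $A=R[\omega]$, and the unit $\mu$ is then removed by the splitting $\mu=(\lambda a^i)\cdot b^j$, each factor being absorbed on its own side. These are the same computation in different clothing: the paper's two independent Jacobian normalisations implement precisely your factorisation of $R^*$ into $(\kk[a^{\pm1},b])^*\cdot(\kk[a,b^{\pm1}])^*$, which you correctly identify as the crux. What your version buys is a Jacobian-free, purely ring-theoretic argument; what the paper's buys is that the normalised pairs $(\omega,\tau_a)$, $(\omega,\tau_b)$ of Jacobian $1$ are exactly the objects fed into Remark~\ref{Rem:trivialazed_bundles_are_trivial} and reused later in the proof of Theorem~\ref{Theorem:BijectionBivariables}.
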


\begin{proof}
By definition, an element $\omega\in \kk[a,b][x,y]$ is a bivariable if and only if there exist $\tau_a\in \kk[a^{\pm 1},b][x,y]$ and $\tau_b\in \kk[a,b^{\pm 1}][x,y]$ such that 
\[\kk[a^{\pm1},b][\omega,\tau_a]=\kk[a^{\pm1},b]\text{ and }\kk[a,b^{\pm1}][\omega,\tau_b]=\kk[a,b^{\pm1}][x,y],\]
i.e.~such that $\alpha=(\omega,\tau_a)\in G_a$ and $\beta=(\omega,\tau_b)\in G_b$.  The Jacobian determinant of $\alpha$ is an element of $\kk[a^{\pm1},b]$ which does not vanish on $U_a \times \A^2$. Hence, $\Jac(\alpha)\in\kk[a^{\pm 1}]\setminus \{0\}$. Substituting $\tau_a$ with $\frac{\tau_a}{\Jac(\alpha)}$, we may thus assume that $\Jac(\alpha)=1$. Similarly, we may assume that $\Jac(\beta)=1$. Moreover, since the first components of $\alpha$ and  $\beta$ are both equal to $\omega$, the first component of the composition $\alpha\circ\beta^{-1}\in G_{ab}$ is equal to $x$. As the Jacobian determinant of $\alpha\circ\beta^{-1}$ is equal to $1$, it follows that $\alpha\circ\beta^{-1}=(x,y+f(x))$ for some $f\in \kk[a^{\pm 1},b^{\pm 1}][x]$. 

Finally,   the equality $\alpha=(x,y+f(x))\circ \beta$ implies that $\tau_a=\tau_b+f(\omega)$ as desired.
\end{proof}

The group   of $\kk[a,b]$-automorphisms of the ring $\kk[a,b][x,y]$ naturally acts on the set of bivariables. Indeed, if $\omega\in \kk[a,b][x,y]$ is a bivariable and if  $g\in \Aut_{\k[a,b]}(\kk[a,b][x,y])$ is an automorphism, then the element $g(\omega)$ is again a bivariable. Therefore, we may consider bivariables up to the action of the group $\Aut_{\k[a,b]}(\kk[a,b][x,y])$ and introduce the following definition.

\begin{definition}\label{Rem:ActionAutOnBivariables}We say that two bivariables $\omega_1, \omega_2\in\kk[a,b][x,y]$ are \emph{equivalent} if there exists a $\kk[a,b]$-automorphism of the ring $\kk[a,b][x,y]$ that maps $\omega_1$ onto $\omega_2$.
\end{definition} 

We now proceed with the proof of Theorem~\ref{Theorem:BijectionBivariables}.


\begin{proof}[Proof of Theorem~$\ref{Theorem:BijectionBivariables}$]
\ref{ThmBiV1}: By Lemma~\ref{lemme-def-tau_a-tau_b} and Remark~\ref{Rem:trivialazed_bundles_are_trivial}, every bivariable trivialises an $\A^2$-bundle $\rho_f\colon X_f\to\A^2_*$ isomorphic to the trivial bundle. 

We now prove that the isomorphism class of $X_f$, as a $\A^1$-bundle, is uniquely determined by $\omega$. Suppose that a bivariable $w$ trivialises two such bundles $\rho_f$ and $\rho_{\tilde{f}}$ and consider two suitable pairs $(\tau_a,\tau_b)$ and $(\tilde{\tau_a},\tilde{\tau_b})$ as in Definition~\ref{Defi:associated-bivariable}. Define $\alpha=(\omega,\tau_a)$, $\tilde{\alpha}=(\omega,\tilde{\tau_a})\in G_a$,  $\beta=(\omega,\tau_b)$ and $\tilde{\beta}=(\omega,\tilde{\tau_b})\in G_b$. Then, $\alpha\circ\beta^{-1}=(x,y+f(x))$ and $\tilde{\alpha}\circ\tilde{\beta}^{-1}=(x,y+\tilde{f}(x))$. Since 
 \[\alpha\circ\tilde{\alpha}^{-1}=(x,y+r_a(x)) \text{ and } \beta\circ\tilde{\beta}^{-1}=(x,y+r_b(x))\]
 for some $r_a\in \kk[a^{\pm 1},b][x]$ and $r_b\in \kk[a,b^{\pm 1}][x]$, the two varieties $X_f$ and $X_{\tilde{f}}$ are isomorphic as $\A^1$-bundles by Lemma~\ref{Lemm:A1bundle}.
 
\ref{ThmBiV2}: Suppose that two bivariables $\omega'$ and $\omega$ are equivalent and let  $\varphi\in \Aut_{\kk[a,b]}(\kk[a,b][x,y])$ be such that $\varphi(\omega)=\omega'$. Then, $\psi=(\varphi(x),\varphi(y))\in G_a\cap G_b$. Choosing $\alpha=(\omega,\tau_a)\in G_a$ and $\beta=(\omega,\tau_b)\in G_b$ as above, we obtain 
 \[\alpha'=\alpha\circ \psi=(\omega',\tau_a')\in G_a \text{ and } \beta'=\beta\circ \psi=(\omega',\tau_b')\in G_b\]
 for suitable elements $\tau_a'$, $\tau_b'$.
 Since, $\alpha\circ\beta^{-1}=\alpha'\circ\beta'^{-1}$, this implies that $\omega$ and $\omega'$ trivialise isomorphic (as $\A^1$-bundles) $\A^2$-bundles. Hence, the map $\kappa$ of Assertion~\ref{ThmBiV2} is well defined. It remains to prove that this map is bijective. 
 
The surjectivity of $\kappa$ follows from Remark~\ref{Rem:intro-bivariables}. Finally, we prove the injectivity. Consider two bivariables $\omega$ and $\omega'$ trivialising the same bundle $\rho_f$. Let $\alpha=(\omega,\tau_a)\in G_a$, $\beta=(\omega,\tau_b)\in G_b$, $\alpha'=(\omega',\tau_a')\in G_a$ and $\beta'=(\omega',\tau_b')\in G_b$ be such that $\alpha\circ\beta^{-1}=\alpha'\circ\beta'^{-1}=(x,y+f(x))$. Then, the element $\psi=\alpha^{-1}\circ\alpha'=\beta^{-1}\circ \beta'\in G_a\cap G_b$ is  an automorphism of $\A^2\times \A^2$, whose action on $\k[a,b,x,y]$ gives an automorphism $\psi^{*}\in\Aut_{\kk[a,b]}(\kk[a,b][x,y])$ sending $\omega$ onto $\omega'$. This shows that $\omega$ and $\omega'$ are equivalent and concludes the proof. 
\end{proof}

We finish this section by considering again two simple families of bivariables.
\begin{example}\label{exple-trivialXf}
Let $v\in \k[a,b,x,y]$ be a $\k[a,b]$-variable and let  $\tau\in \k[a,b,x,y]$ be such that $\k[a,b][x,y]=\k[a,b][v,\tau]$. Defining $\tau_a=\tau_b=\tau$, the bijection of Theorem~\ref{Theorem:BijectionBivariables} associates the (equivalence class) of the bivariable $v$ to the (isomorphism class as $\A^1$-bundle) of the trivial bundle $X_f$ with $f=0$.
\end{example}

\begin{example}\label{exple-bivariable1}Let $m,n\geq1$ be positive integers and $P\in \k[a,b]$. Then, the polynomial $\omega=a^mx+b^ny+P$ is a bivariable. Indeed, choosing $\tau_a=a^{-m}y$ and $\tau_b=-b^{-n}x$, we get that $\alpha=(\omega,\tau_a)\in G_a$ and $\beta=(\omega,\tau_b)\in G_b$. Then, the bijection of Theorem~\ref{Theorem:BijectionBivariables} sends (the class of) this bivariable onto (the class of) $X_f$ where $f=f(x)=\frac{x-P}{a^mb^n}$, since $\tau_a=\tau_b+f(\omega)$.
\end{example}

\section{A procedure to construct bivariables}\label{section:construction_of_bivariables}

 Let $\omega\in\kk[a,b][x,y]$ be a bivariable, and let $\tau_a\in\kk[a^{\pm1},b][x,y]$ and $\tau_b\in\kk[a,b^{\pm1}][x,y]$ be such that $\alpha=(\omega,\tau_a)\in G_a$ and $\beta=(\omega,\tau_b)\in G_b$. An easy way to get a new bivariable from $\omega$ is to compose $\alpha$ with a triangular automorphism $\varphi=(x+P(y),y)$, where $P\in\kk[a,b][x]$ should be well chosen, so that $\tilde{\omega}=\omega+P(\tau_a)$ is also a $G_b$-variable. (Note that $\tilde{\omega}$ is the first component of $\varphi\circ\alpha$, hence it is a $G_a$-variable.) In order to see that this idea works in general, we recall the following well-known result (see for example the proof of  \cite[Theorem 4]{EV}).
 
 \begin{lemma}\label{lemme:variables_de_longueur2}
 Let $R$ be a commutative ring with unity, $m\geq1$ be a positive integer, $a\in R$ be a non-zero divisor and $f,Q\in R[X]$ be polynomials in one indeterminate with coefficient in $R$. Then, the polynomial 
 \[v=x+aQ(a^my+f(x))\]
is a $R$-variable of $R[x,y]$.
 \end{lemma}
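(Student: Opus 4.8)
The plan is to show that $v$ is an $R$-variable by exhibiting an $R$-automorphism $\theta$ of $R[x,y]$ with $\theta(x)=v$; equivalently, by producing a companion $\tau\in R[x,y]$ with $R[x,y]=R[v,\tau]$. Writing $w=a^my+f(x)$, so that $v=x+aQ(w)$, the first observation is that everything is transparent after inverting $a$. Over the localisation $R[a^{-1}]$ the map $M\colon x\mapsto x,\ y\mapsto a^my+f(x)$ is an automorphism (with inverse $x\mapsto x,\ y\mapsto a^{-m}(y-f(x))$), and $N\colon x\mapsto x+aQ(y),\ y\mapsto y$ is an elementary shear. Their conjugate $\Phi=M\circ N\circ M^{-1}$ satisfies $\Phi(x)=x+aQ(a^my+f(x))=v$, so $v$ is at least an $R[a^{-1}]$-variable.

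I would then record the companion produced by this conjugation. Expanding $f(x+h)=\sum_{i\ge0}f_i(x)h^i$ in $R[x,h]$ (so the $f_i(x)\in R[x]$ are integral) and using that $v-x=aQ(w)$, a direct computation gives
\[\Phi(y)=y-\sum_{i\ge1}f_i(x)\,a^{i-m}Q(w)^i.\]
When $m=1$ every exponent $i-m=i-1$ is $\ge 0$, so $\Phi(y)\in R[x,y]$; the analogous computation for $\Phi^{-1}=M\circ N^{-1}\circ M^{-1}$ shows that its companion is integral as well. Hence for $m=1$ the automorphism $\Phi$ is already defined over $R$ and the lemma follows.

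For $m\ge 2$ the terms with $i<m$ in the formula above carry genuine denominators $a^{m-i}$, so the naive lift $\Phi$ is \emph{not} defined over $R$; clearing these denominators is the crux of the proof, and is the step I expect to be the main obstacle. I would resolve it by induction on $m$, the strategy being to absorb one factor of $a$ per step: the $m=1$ automorphism above sends $x\mapsto x+aQ(ay+f(x))$ over $R$, and one composes it with integral elementary automorphisms that upgrade the inner argument from $ay+f(x)$ towards $a^my+f(x)$ while keeping the second coordinate polynomial, thereby reducing the exponent $m$ by one at the cost of replacing $(Q,f)$ by another such pair. Constructing these correcting automorphisms so that the companion stays in $R[x,y]$ is the technical heart, and it is exactly here that the hypothesis that $a$ is a non-zero-divisor is used: it guarantees that the relevant divided differences $\frac{f(v)-f(x)}{v-x}$ are polynomials and that the computation carried out over $R[a^{-1}]$ faithfully reflects the situation over $R$. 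This is the argument of \cite[Theorem 4]{EV}, which I would follow for the inductive step.
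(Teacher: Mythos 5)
Your localisation set-up and the case $m=1$ are correct and complete: over $R[a^{-1}]$ the conjugate $\Phi=M\circ N\circ M^{-1}$ satisfies $\Phi(x)=v$ and $\Phi(y)=y+a^{-m}\bigl(f(x)-f(v)\bigr)$, for $m=1$ both $\Phi$ and $\Phi^{-1}$ visibly preserve $R[x,y]$, and since $a$ is a non-zero divisor the inclusion $R[x,y]\subseteq R[a^{-1}][x,y]$ is faithful, so $\Phi$ restricts to an $R$-automorphism. This agrees with the paper's proof specialised to $m=1$, where $g=g_1=f$ and $\varphi(y)=y+(f(x)-f(v))/a$.

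The genuine gap is the case $m\ge 2$, which is the substance of the lemma and the case the paper actually uses (Lemma~\ref{Lemm:V2} invokes it with $m=3$, and Lemma~\ref{lemm:piYtrivial} and Proposition~\ref{Prop:NewBiV} need arbitrary $m$). You state yourself that constructing the correcting automorphisms is ``the technical heart'' and then do not construct them, deferring to \cite[Theorem 4]{EV}; since the lemma essentially \emph{is} that result, this is a citation, not a proof. Moreover, the induction you sketch is structurally different from the argument that works, and its natural implementation fails: to ``upgrade the inner argument from $ay+f(x)$ towards $a^my+f(x)$'' one would have to substitute $y\mapsto a^{m-1}y+(\cdots)$, which is not an automorphism of $R[x,y]$ when $a$ is not a unit, and no alternative integral automorphism is produced. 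The argument of \cite{EV}, written out in the paper, is not an induction on $m$ at all: one fixes $m$ and runs a successive-approximation induction on the exponent $i$ of the congruence, setting $g_1=f$ and $g_i=f\bigl(x-aQ(g_{i-1}(x))\bigr)$ and proving $g_i(v)\equiv f(x) \pmod{a^i}$; the denominator $a^m$ is then killed in one shot by taking $\varphi(x)=v$, $\varphi(y)=y+\bigl(f(x)-g_m(v)\bigr)/a^m$ --- that is, by replacing $f(v)$ with $g_m(v)$ in the second coordinate rather than by composing automorphisms to lower $m$ --- and the inverse is exhibited explicitly via $\hat v=x-aQ(a^my+g(x))$ together with the companion congruence $f(\hat v)\equiv g(x)\pmod{a^m}$. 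Without this (or an equivalent) construction, your argument proves only the case $m=1$. A minor further point: the non-zero-divisor hypothesis is not what makes divided differences polynomial (that is automatic over any commutative ring); it is what makes division by $a^m$ well defined and the passage through $R[a^{-1}]$ injective.
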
 \begin{proof}
Our proof simply follows the proof of \cite[Theorem 4]{EV}. We shall  construct a polynomial $g(x)\in R[x]$ such that
\begin{equation}\label{gvfx} f(x)\equiv_m g(v) \tag{$\spadesuit$} \end{equation}
and show, by giving its inverse, that the map $\varphi \in \mathrm{End}_R(R[x,y])$, defined by 
$\varphi(x)=v \text{ and } \varphi(y)=y+(f(x)-g(v))/a^m$, is an automorphism.

The idea is to work inductively on $i\in \{2,\ldots,m\}$. 
Let us denote by $\equiv_i$ the congruence in $R[x]$ modulo $a^i$ and let us define polynomials $g_1,\ldots,g_m\in R[x]$  by 
$g_1=f$ and $g_i=f(x-aQ(g_{i-1}(x)))$ for $i\in \{2,\ldots,m\}.$

We claim that \[g_i(v)\equiv_{i}  f(x)\]  for all $i=1,\ldots,m$. 
This is indeed true for $i=1$,  since    $f(v)=g_1(v)\equiv_1 g_1(x)=f(x)$, and then also for all $i\in \{2,\ldots,m\}$, since  
\[g_i(v)=f(v-aQ(g_{i-1}(v)))\equiv_i f(v-aQ(f(x)))\equiv_m f(x).\] 

In particular, the polynomial  $g(x)=g_m(x)\in R[x]$ satisfies the desired congruence \eqref{gvfx} and the map $\varphi$  defined above is an $R$-endomorphism of  $R[x,y]$.

We now proceed to construct the inverse map of $\varphi$. For this, we define $\hat{v}=x-aQ(a^my+g(x))$ and claim that $f(\hat{v})\equiv_m g(x)$. This follows from  the congruences $f(x-aQ(g(x)))\equiv_i g(x)$  that we prove now by induction on  $i=1,\ldots,m$. 
For $i=1$, this holds, since $f(x-aQ(g(x)))\equiv_1 f(x)\equiv_m g(v)\equiv_1 g(x)$. For $i>1$, we replace $x$ with $x-aQ(g(x))$ in \eqref{gvfx} and obtain 
\[\begin{array}{rcl}
f(x-aQ(g(x)))&\equiv_m &g(x-aQ(g(x))+aQ(f(x-aQ(g(x)))))\\
&\equiv_i&g(x-aQ(g(x))+aQ(g(x))))=g(x)
\end{array}\]

Finally, we define $\psi\in \mathrm{End}_R(R[x,y])$ by 
 $\psi(x)=\hat{v} \text{ and } \psi(y)=y+(g(x)-f(\hat{v}))/a^m$. It is straightforward to check that $\varphi\circ\psi=\psi\circ \varphi=\mathrm{id}_{R[x,y]}.$ This concludes the proof.
\end{proof}

We now apply the above lemma to construct new bivariables.

\begin{proposition}\label{Prop:NewBiV}
Let  $\omega\in \kk[a,b][x,y]$ be a bivariable. According to Theorem~\ref{Theorem:BijectionBivariables}, let $\tau_a$,  $\tau_b$ and $f(x)$ be  such that 
$\kk[a^{\pm1},b][\omega,\tau_a]=\kk[a^{\pm1},b][x,y]$, $\kk[a,b^{\pm1}][\omega,\tau_b]=\kk[a,b^{\pm1}][x,y]$ and $\tau_a=\tau_b+f(\omega)$.  Suppose that $m,n\ge 1$ are positive integers such that $a^mb^nf(x)\in \k[a,b,x]$ and let $Q\in \k[a,b][x]$ be a polynomial. Then, the elements
\[\omega+aQ(a^m\tau_a)=\omega+aQ(a^m\tau_b+a^mf(\omega))\]
and
\[\omega+bQ(b^n\tau_b)=\omega+bQ(b^n\tau_a-b^nf(\omega))\]
are bivariables of $\kk[a,b][x,y]$.
\end{proposition}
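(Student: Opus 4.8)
The two displayed identities are immediate from the relation $\tau_a=\tau_b+f(\omega)$, which gives $a^m\tau_a=a^m\tau_b+a^mf(\omega)$ and $b^n\tau_b=b^n\tau_a-b^nf(\omega)$. It therefore suffices to prove that $\hat\omega=\omega+aQ(a^m\tau_a)$ is a bivariable; the statement for $\tilde\omega=\omega+bQ(b^n\tau_b)$ follows by the symmetric argument obtained by interchanging the roles of $a$ and $b$, of $m$ and $n$, and of $\tau_a$ and $\tau_b$. The plan is to verify the two halves of Definition~\ref{Defi:bivariable} separately, one of which is elementary and the other of which is exactly the content of Lemma~\ref{lemme:variables_de_longueur2}.

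First I would check that $\hat\omega$ is a $\kk[a^{\pm1},b]$-variable. Since $\kk[a^{\pm1},b][\omega,\tau_a]=\kk[a^{\pm1},b][x,y]$, the pair $(\omega,\tau_a)$ is a system of coordinates, so there is a $\kk[a^{\pm1},b]$-automorphism $\Phi$ of $\kk[a^{\pm1},b][x,y]$ with $\Phi(x)=\omega$ and $\Phi(y)=\tau_a$. As $Q\in\kk[a,b][x]\subseteq\kk[a^{\pm1},b][x]$ and $a\in\kk[a^{\pm1},b]^{*}$, the element $aQ(a^m\tau_a)$ lies in $\kk[a^{\pm1},b][x,y]$ and $\hat\omega=\Phi\bigl(x+aQ(a^my)\bigr)$ is the image under $\Phi$ of the triangular variable $x+aQ(a^my)$. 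Hence $\hat\omega$ is a $\kk[a^{\pm1},b]$-variable, and in particular $\hat\omega\in\kk[a^{\pm1},b][x,y]$.

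The main step is to show that $\hat\omega$ is a $\kk[a,b^{\pm1}]$-variable, and here the hypothesis $a^mb^nf(x)\in\kk[a,b,x]$ enters. It guarantees that $a^mf=(a^mb^nf)/b^n\in\kk[a,b^{\pm1}][x]$. Writing $R=\kk[a,b^{\pm1}]$ and using that $(\omega,\tau_b)$ is a system of coordinates of $R[x,y]=R[\omega,\tau_b]$, the second form of the defining identity yields
\[
\hat\omega=\omega+aQ\bigl(a^m\tau_b+\hat{f}(\omega)\bigr),\qquad \hat{f}(X):=a^mf(X)\in R[X].
\]
This is precisely the shape $v=x+aQ(a^my+f(x))$ of Lemma~\ref{lemme:variables_de_longueur2}, now read with the coordinates $(\omega,\tau_b)$ in place of $(x,y)$, with the integral domain $R$ in which $a$ is a non-zero divisor, and with $\hat{f},Q\in R[X]$. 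Applying that lemma shows that $\hat\omega$ is an $R$-variable of $R[x,y]$, i.e.\ a $\kk[a,b^{\pm1}]$-variable; in particular $\hat\omega\in\kk[a,b^{\pm1}][x,y]$, since $\omega,\tau_b\in\kk[a,b^{\pm1}][x,y]$ and $\hat{f}\in R[X]$.

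It then remains to check that $\hat\omega$ actually lies in $\kk[a,b][x,y]$, so that it is a genuine bivariable. This is automatic once the two previous steps are done: from them $\hat\omega\in\kk[a^{\pm1},b][x,y]\cap\kk[a,b^{\pm1}][x,y]=\kk[a,b][x,y]$, since a Laurent polynomial with no negative powers of $a$ and none of $b$ is an ordinary polynomial. The symmetric statement for $\tilde\omega$ is obtained verbatim after swapping $a\leftrightarrow b$ and $m\leftrightarrow n$: over $\kk[a,b^{\pm1}]$ the element $\tilde\omega=\omega+bQ(b^n\tau_b)$ is a triangular variable in the coordinates $(\omega,\tau_b)$, while over $\kk[a^{\pm1},b]$ one writes $\tilde\omega=\omega+bQ(b^n\tau_a-b^nf(\omega))$ and applies Lemma~\ref{lemme:variables_de_longueur2} with the non-zero divisor $b$ and the polynomial $-b^nf\in\kk[a^{\pm1},b][x]$, the latter membership again coming from $b^nf=(a^mb^nf)/a^m$ and the hypothesis. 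I expect the only genuine subtlety to be matching $\hat\omega$ to the exact normal form of Lemma~\ref{lemme:variables_de_longueur2} and verifying the integrality of the inner polynomials $a^mf$ and $b^nf$, which is exactly what the assumption $a^mb^nf\in\kk[a,b,x]$ is there to provide.
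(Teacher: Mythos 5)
Your proof is correct and follows essentially the same route as the paper's: prove the claim for $\hat\omega=\omega+aQ(a^m\tau_a)$ only, obtain the $\kk[a^{\pm1},b]$-variable property by viewing $\hat\omega$ as the image of the triangular variable $x+aQ(a^my)$ under the coordinate change $(x,y)\mapsto(\omega,\tau_a)$, and obtain the $\kk[a,b^{\pm1}]$-variable property by applying Lemma~\ref{lemme:variables_de_longueur2} in the coordinates $(\omega,\tau_b)$ with inner polynomial $a^mf\in\kk[a,b^{\pm1}][x]$, whose integrality is exactly what the hypothesis $a^mb^nf\in\kk[a,b,x]$ provides. Your only addition is the explicit remark that $\hat\omega\in\kk[a^{\pm1},b][x,y]\cap\kk[a,b^{\pm1}][x,y]=\kk[a,b][x,y]$, a point the paper leaves implicit.
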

\begin{proof}The proof that $ \omega+bQ(b^n\tau_b)$ is a bivariable being similar (by exchanging the roles of $a$ and $b$), we only prove that \[\hat\omega=\omega+aQ(a^m\tau_a)=\omega+aQ(a^m\tau_b+a^mf(\omega))\] is a bivariable.

 On the one hand, it is a $\kk[a^{\pm1},b]$-variable since it is the first component of the composition of the  $\kk[a^{\pm1},b]$-automorphisms defined by $(x,y)\mapsto(x+aQ(a^my),y))$ and by $(x,y)\mapsto(\omega,\tau_a)$. 
 
 On the other hand, there exists, by Lemma~\ref{lemme:variables_de_longueur2}, a $\kk[a,b^{\pm1}]$-automorphism, say $\varphi$, of $\kk[a,b^{\pm1}][x,y]$ whose first component is equal to $v=x+aQ(a^my+a^mf(x))$. The element $\hat\omega=\omega+aQ(a^m\tau_b+a^mf(\omega))$ is therefore a  $\kk[a,b^{\pm1}]$-variable, since it is equal to the first component of the composition of $\varphi$ with the  $\kk[a,b^{\pm1}]$-automorphism defined by  $(x,y)\mapsto(\omega,\tau_b)$. 
\end{proof}
\begin{remark}
Let $\omega$ be a bivariable associated with the data $\tau_a$, $\tau_b$ and $f(x)$ as in Theorem~\ref{Theorem:BijectionBivariables}. Let $\hat\omega$ be a new bivariable obtained from $\omega$ by applying Proposition~\ref{Prop:NewBiV}. Then, one can  easily compute   elements  $\hat{\tau}_a,\hat{\tau}_b$ and $\hat{f}(x)$ associated with $\hat{\omega}$. Indeed, in the proof of Lemma~\ref{lemme:variables_de_longueur2}, we gave an explicit automorphism of $R[x,y]$ whose first component is equal to $v$. 
\end{remark}
 
\begin{example}\label{Ex:m=1}
Let us consider the bivariable  $\omega=ax+b^2y$ from  Example~\ref{exple-bivariable1.0}. Recall that it is indeed a bivariable, associated with $\tau_a=\frac{y}{a}$, $\tau_b=-\frac{x}{b^2}$ and $f(x)=\frac{x}{ab^2}$.  By Proposition~\ref{Prop:NewBiV}, the polynomial \[\hat{\omega}=\omega+bP(-b^2\tau_b)=ax+b^2y+bP(x)\] is a bivariable for each $P(x)\in\kk[x]$. 

Furthermore, one claims that this new bivariable  is associated with 
\[\hat{f}(x)=\frac{x}{ab^2}-\frac{P(\frac{x}{a})}{ab},\]
with $\hat{\tau}_b=\tau_b=-\frac{x}{b^2}$ and with \[\hat{\tau}_a=\hat{\tau}_b+\hat{f}(\hat{\omega})=\frac{y}{a}+\frac{P(x)}{ab}-\frac{1}{ab}P(x+\frac{b^2y+bP(x)}{a}).\]
Our claims is   easy to check. First, it is   straightforward to see that $\hat{\tau}_a\in \k[a^{\pm 1},b,x,y]$. Then, since $(\hat{\omega},\hat{\tau}_b)\in G_b\subset G_{ab}$ is an automorphism of Jacobian determinant $1$, it follows that the element $(\hat{\omega},\hat{\tau}_a)\in G_{ab}$, whose both components belong to $\k[a^{\pm 1},b,x,y]$, is also of Jacobian determinant $1$, hence that it is an element of $G_a$.
\end{example} 

\begin{remark}\label{rem:Venereau3}
We recall that, as computed in   Example~\ref{Ex:f(x)_for_m=1,2}, the function $\hat{f}(x)$ above is actually the transition function of the bundle $\rho_{\hat{f}}$ corresponding to the fibration $\pi_{P,n}$ in the case where $n>\deg(P)$. Consequently, it follows from Theorem \ref{Theorem:BijectionBivariables} and Lemma \ref{Lem:Equivalenttrivialities} that every polynomial $v_{P,n}$ is a $\k[x]$-variable, when  $n>\deg(P)$. In particular, we recover the fact that the $n$-th V\'en\'ereau polynomials are $\k[x]$-variables for all $n\geq3$.
\end{remark}

We can now start with the bivariable of Example~\ref{Ex:m=1} and apply again Proposition~\ref{Prop:NewBiV} to it. Doing so, we construct the following new bivariables. 

\begin{lemma}\label{Lemm:V2}
Let $P\in \k[X]$ be a polynomial of degree $2$ with leading coefficient $c\in \k^*$ and suppose that $\mathrm{char}(\k)\not=2$. Then, the polynomial 
\[\omega+\frac{a^5}{2c}(\frac{y}{a}+\frac{P(x)}{ab}-\frac{1}{ab}P(\frac{\omega}{a}))\]
is a bivariable,  associated with the $\A^2$-bundle over $\A^2_*$ corresponding to the fibration $\pi_{P,2}$.
\end{lemma}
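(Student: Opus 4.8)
The plan is to recognise the displayed polynomial as an instance of the construction of Proposition~\ref{Prop:NewBiV}, applied to the bivariable of Example~\ref{Ex:m=1}, and then to identify the transition function it produces. Write $\omega=ax+b^2y+bP(x)$ for the bivariable of Example~\ref{Ex:m=1}, together with its associated data
\[\tau_a=\frac{y}{a}+\frac{P(x)}{ab}-\frac{1}{ab}P(\frac{\omega}{a}),\qquad \tau_b=-\frac{x}{b^2},\qquad f(x)=\frac{x}{ab^2}-\frac{1}{ab}P(\frac{x}{a}),\]
so that $\tau_a=\tau_b+f(\omega)$. The first observation is that the expression inside the parentheses in the statement is exactly $\tau_a$, so the polynomial under consideration is $W:=\omega+\frac{a^5}{2c}\tau_a$. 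Since $\deg(P)=2$, the denominator of $f$ divides $a^3b^2$, hence $a^4b^2f(x)\in\k[a,b,x]$; moreover $\mathrm{char}(\k)\neq2$ and $c\in\k^*$ guarantee that $\frac{1}{2c}\in\k$. Applying Proposition~\ref{Prop:NewBiV} with $m=4$, $n=2$ and $Q(X)=\frac{X}{2c}\in\k[a,b][x]$ gives that $\omega+aQ(a^4\tau_a)=W$ is a bivariable. This settles the first assertion.

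It remains to identify the associated bundle. By Theorem~\ref{Theorem:fcs_de_transition}, via Example~\ref{Ex:f(x)_for_m=1,2} with $n=\deg(P)=2$, the restriction of $\pi_{P,2}$ to $\A^2_*$ is the bundle $\rho_{f_\pi}$ with
\[f_\pi=\frac{x}{ab^2}-\frac{1}{ab}P(\frac{x}{a})-\frac{a}{b^2}xP(\frac{x}{a}).\]
By Definition~\ref{Defi:associated-bivariable} and Remark~\ref{Rem:trivialazed_bundles_are_trivial}, it suffices to produce companions $\hat\tau_a\in\k[a^{\pm1},b][x,y]$ and $\hat\tau_b\in\k[a,b^{\pm1}][x,y]$ with $(W,\hat\tau_a)\in G_a$, $(W,\hat\tau_b)\in G_b$ and $\hat\tau_a=\hat\tau_b+f_\pi(W)$; Lemma~\ref{Lemm:A1bundle} then leaves room to match $f_\pi$ only up to an isomorphism of $\A^1$-bundles, should the natural companion differ from the expected one by a polynomial in $W$.

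I would carry this out by the guess-and-verify route of Example~\ref{Ex:m=1}. On the $a$-side, $W$ is the first component of the composition of $(x,y)\mapsto(x+\frac{a^5}{2c}y,y)\in G_a$ with $(\omega,\tau_a)\in G_a$, so the $a$-companion is unchanged: $(W,\tau_a)\in G_a$, i.e. $\hat\tau_a=\tau_a$. I would then define $\hat\tau_b:=\tau_a-f_\pi(W)$ and show, by expanding $P(\frac{\omega}{a})$ and $P(\frac{W}{a})$, that $\hat\tau_b$ in fact lies in $\k[a,b^{\pm1}][x,y]$. Granting this, the pair $(W,\hat\tau_b)$ has both components in $\k[a,b^{\pm1}][x,y]$ and is obtained from $(W,\tau_a)\in G_a$ by post-composing with the triangular map $(x,y-f_\pi(x))$ of Jacobian determinant $1$; hence $(W,\hat\tau_b)$ has Jacobian determinant $1$ and therefore belongs to $G_b$, by the same Jacobian argument used at the end of Example~\ref{Ex:m=1} (with the roles of $a$ and $b$ exchanged). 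Since then $\tau_a=\hat\tau_b+f_\pi(W)$, the bivariable $W$ trivialises $\rho_{f_\pi}$, which is precisely the bundle of $\pi_{P,2}$.

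The main obstacle is the verification that $\hat\tau_b=\tau_a-f_\pi(W)$ carries no negative power of $a$. Here the numerology is essential: as $P$ is quadratic, both $P(\frac{\omega}{a})$ appearing in $\tau_a$ and $P(\frac{W}{a})$ appearing in $f_\pi(W)$ contribute terms quadratic in $\frac1a$, and the coefficient $\frac{a^5}{2c}$, together with $\mathrm{char}(\k)\neq2$, is tuned so that after completing the square the genuinely negative powers of $a$ cancel. Performing this expansion and collecting the surviving terms is the computational heart of the argument; everything else is formal and parallels Example~\ref{Ex:m=1} and Remark~\ref{rem:Venereau3}. Alternatively, one could instead follow the Remark after Proposition~\ref{Prop:NewBiV} and read off $\hat\tau_b$ from the explicit automorphism of Lemma~\ref{lemme:variables_de_longueur2}, but the direct verification above is shorter.
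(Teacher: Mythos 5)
Your proposal follows exactly the paper's own route: apply Proposition~\ref{Prop:NewBiV} to the bivariable $\omega=ax+b^2y+bP(x)$ of Example~\ref{Ex:m=1} to produce $W=\omega+\frac{a^5}{2c}\tau_a$ (the paper takes $m=3$ and $Q=\frac{ax}{2c}$, you take $m=4$ and $Q=\frac{X}{2c}$; both choices are legitimate and give the same element), keep $\hat\tau_a=\tau_a$ since $(W,\tau_a)\in G_a$, identify the target transition function $f_\pi$ from Example~\ref{Ex:f(x)_for_m=1,2}, and reduce everything to showing that $\hat\tau_b:=\tau_a-f_\pi(W)$ has no denominators in $a$, concluding with the same Jacobian argument that upgrades this to $(W,\hat\tau_b)\in G_b$. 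All of these reductions are correct and coincide with the paper's proof.

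The gap is that you never prove the one statement on which the lemma actually rests: that $\tau_a-f_\pi(W)$ lies in $\k[a,b^{\pm1}][x,y]$. You label it ``the computational heart'' and gesture at a completing-the-square cancellation, but you do not perform it, and the heuristic as stated is not the mechanism at work. In the paper one sets $f_b=a^3f$ and $\hat f_b=a^3f_\pi$ and must show $\hat f_b(W)\equiv f_b(\omega)\pmod{a^3}$ (this suffices because $\tau_a=\tau_b+f(\omega)$ and $\tau_b=-\frac{x}{b^2}$ has no $a$-denominators). Since $\Delta:=\frac{a^5}{2c}\tau_a\in a^2\,\k[a,b^{\pm1},x,y]$, one has $\Delta^2\equiv0\pmod{a^4}$, so a Taylor expansion gives $\hat f_b(W)\equiv \hat f_b(\omega)+\hat f_b'(\omega)\,\Delta\pmod{a^4}$, and the proof comes down to the cancellation, modulo $a^3$, of two cubic terms: the extra summand of $f_\pi$ relative to $f$, namely $-\frac{a^4}{b^2}\omega P(\frac{\omega}{a})\equiv-\frac{a^2c}{b^2}\omega^3$, against the first-order Taylor term $\hat f_b'(\omega)\Delta\equiv\bigl(-\tfrac{1}{b}\cdot 2c\omega\bigr)\bigl(-\tfrac{a^2}{2b}\omega^2\bigr)=+\frac{a^2c}{b^2}\omega^3$. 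This is precisely where the coefficient $\frac{a^5}{2c}$, the hypothesis $\deg P=2$, and $\mathrm{char}(\k)\neq2$ enter, and it is not a formality that can be waved through: the analogous attempt for the first V\'en\'ereau polynomial fails at exactly this kind of step. Without carrying out this verification (or the alternative you mention, reading $\hat\tau_b$ off the explicit automorphism of Lemma~\ref{lemme:variables_de_longueur2}), what you have is a correct reduction of the lemma to its key computation, not a proof of it.
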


\begin{remark}\label{rem:Venereau2}
As a corollary, we recover the fact that the second V\'en\'ereau polynomial is a  $\k[x]$-variable of $\k[x,y,z,u]$ when $\mathrm{char}(\k)\not=2$.  
\end{remark}

\begin{proof}[Proof of Lemma \ref{Lemm:V2}]
We apply Proposition~\ref{Prop:NewBiV} to the bivariable $\omega=ax+b^2y+bP(x)$ of Example~\ref{Ex:m=1}, which is associated with the data $\tau_a=\frac{y}{a}+\frac{P(x)}{ab}-\frac{1}{ab}P(\frac{\omega}{a})$, $\tau_b=-\frac{x}{b^2}$ and $f(x)=\frac{x}{ab^2}-\frac{1}{ab}P(\frac{x}{a})$. By Proposition~\ref{Prop:NewBiV}, the polynomial \[\hat{\omega}=\omega+aQ(a^m\tau_a)\] is a bivariable for every $m>\deg(P)$ and every $Q\in \k[a,b][x]$. In particular, we may choose    $m=3$ and $Q=\frac{ a x}{2c}$.  With these choices, we obtain the bivariable
\[\hat{\omega}=\omega+\frac{a^5}{2c}\tau_a=\omega+\frac{a^5}{2c}(\frac{y}{a}+\frac{P(x)}{ab}-\frac{1}{ab}P(\frac{\omega}{a})).\] 
Now, it remains to check that this bivariable indeed corresponds to the  transition function of $\pi_{P,2}$, i.e.~that it is associated with the function \[\hat{f}(x)=\frac{x}{ab^2}-\frac{1}{ab}P(\frac{x}{a})-\frac{a}{b^2}xP(\frac{x}{a})\]
that we computed at Example~\ref{Ex:f(x)_for_m=1,2}.

 As $(\omega,\tau_a)\in G_a$, we also have $(\hat\omega,\tau_a)\in G_a$ and we can let $\hat{\tau_a}=\tau_a$. To conclude the proof, we only need to prove that 
 \[ \hat{\tau}_b=\hat{\tau_a}-\hat{f}(\hat\omega)=\tau_a-\hat{f}(\hat\omega)=\tau_b+f(\omega)-\hat{f}(\hat\omega)\]
 is an element of the ring $\kk[a,b^{\pm1},x,y]$, i.e.~that it has no denominators in $a$. So, we need to prove that $f(\omega)-\hat{f}(\hat\omega)\in\kk[a,b^{\pm1},x,y]$. If we define $f_b=a^3f$  and $\hat{f_b}=a^3\hat{f}$, then  $f_b, \hat{f_b}\in \k[a,b^{\pm1}][x]$ and  we must then prove that 
 \begin{equation}\label{ffhat1}\hat{f_b}(\hat{\omega})\equiv_3 f_b(\omega),\tag{$\diamondsuit$}\end{equation} where $\equiv_i$ denotes the equality in $ \k[a,b^{\pm1},x,y]$ modulo $a^i$. As the element $\Delta=\frac{a^5}{2c}\tau_a$ belongs to the ideal $a^2\k[a,b^{\pm 1},x,y]$, we have that $\Delta^2\equiv_4 0$. By a Taylor expansion, we then obtain that 
 \[ \hat{f_b}(\hat{w})=\hat{f_b}(\omega+\Delta)\equiv_4\hat{f_b}(\omega)+\hat{f_b}'(\omega)\cdot \Delta.\]
 Since $\hat{f}(x)=f(x)-\frac{a}{b^2}xP(\frac{x}{a})$, we have that  $\hat{f_b}(\omega)\equiv_3 f_b(\omega)-\frac{a^4}{b^2}\omega P(\frac{\omega}{a})$. On the other hand, we have that $\hat{f_b}'(x)\equiv_1 -\frac{a}{b}P'(\frac{x}{a})$.
Therefore, we get that
 \[\begin{array}{rcl} \hat{f_b}(\hat{w})\equiv_3 f_b(\omega)-\frac{a^4}{b^2}\omega P(\frac{\omega}{a})-\frac{a}{b}P'(\frac{\omega}{a})\cdot \Delta.\end{array}\]
Finally, the result follows from the congruences $a^4P(\frac{\omega}{a})\equiv_3 a^2c\omega^2$, $aP'(\frac{\omega}{a})\equiv_1 2c\omega$ and $\Delta\equiv_3 -\frac{a^2}{2b}\omega^2$.
\end{proof}

 Lemma~\ref{lemme:variables_de_longueur2} does not only allow us to construct bivariables, but it is also useful to study $\A^2$-bundles $\rho_f\colon X_f\to \A^2_*$, $f\in\kk[a^{\pm1},b^{\pm1}][x]$, that are not necessarily trivial. More precisely, given a bundle $\rho_f$, one can construct other bundles $\rho_g$, that are isomorphic to $\rho_f$ as follows.  
 
\begin{proposition}\label{prop:procedure_transitions}
Let $f_b,g_b\in\kk[a,b^{\pm1}][x]$. If there exist $m\geq1$ and $Q\in\kk[a,b][x]$ such that \[g_b(x+aQ(f_b(x)))\equiv f_b(x)\pmod{a^m},\]
 then $\rho_{f_b/a^m}$ and $\rho_{g_b/a^m}$ are isomorphic $\A^2$-bundles.
\end{proposition}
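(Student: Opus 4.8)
The plan is to reduce the isomorphism of the two $\A^2$-bundles $\rho_{f_b/a^m}$ and $\rho_{g_b/a^m}$ to the criterion of Lemma~\ref{Lem:isomorphic-A2bundles}, namely to produce $\alpha\in G_a$ and $\beta\in G_b$ with $\alpha\circ(x,y+f_b(x)/a^m)\circ\beta^{-1}=(x,y+g_b(x)/a^m)$. The hypothesis $g_b(x+aQ(f_b(x)))\equiv f_b(x)\pmod{a^m}$ is precisely the congruence \eqref{gvfx} appearing in the proof of Lemma~\ref{lemme:variables_de_longueur2} (with the ring $R=\kk[a,b^{\pm1}]$, with the non-zero divisor $a$, and with $m$, $Q$, and $f_b$ in the roles of the corresponding objects there). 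So the first step is to invoke Lemma~\ref{lemme:variables_de_longueur2} to obtain a $\kk[a,b^{\pm1}]$-automorphism $\varphi$ of $\kk[a,b^{\pm1}][x,y]$ whose first component is $v=x+aQ(a^my+f_b(x))$; this $\varphi$ lives in $G_b$, which will be my $\beta$ (up to inversion and a harmless relabeling).

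Next I would track what $\varphi$ does to the transition function. The map $(x,y)\mapsto(x,y+f_b(x)/a^m)$ is the transition function of $\rho_{f_b/a^m}$; I would conjugate it by $\varphi$ and read off that the first component becomes $v$ evaluated so that the inner $a^my+f_b(x)$ shift produces the required offset. Concretely, the point is that composing with $(x,y+f_b/a^m)$ amounts to replacing $a^my$ by $a^my+f_b(x)$ inside $\varphi$, and by the congruence $\varphi$ then has first component congruent to $x\pmod{a^m}$; the discrepancy between $g_b$ and $f_b$ is absorbed exactly by the term $aQ$, whose inner argument changes by $f_b(x)/a^m\cdot a^m=f_b(x)$ under the transition. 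The second step is therefore to verify by direct substitution that $\varphi\circ(x,y+f_b(x)/a^m)$ and $(x,y+g_b(x)/a^m)\circ\varphi$ agree, i.e.\ that the two sides differ by an element of $G_a$.

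The third step is to exhibit that leftover element $\alpha\in G_a$. After conjugating by $\beta=\varphi$, the composed map has first component equal to $x$ (since $\varphi$ restores $x$ modulo $a^m$ and the outer transition cancels the rest) and Jacobian determinant $1$, and by construction all its entries are polynomial in $a$ (no negative powers of $a$ survive, because the congruence guarantees divisibility by $a^m$ before division by $a^m$). Hence it lies in $G_a$, which gives the required $\alpha$, and Lemma~\ref{Lem:isomorphic-A2bundles} then yields the isomorphism of $\A^2$-bundles.

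The main obstacle I expect is the bookkeeping in the second step: one must check carefully that, after conjugating the transition function $(x,\,y+f_b(x)/a^m)$ by the automorphism $\varphi$ produced by Lemma~\ref{lemme:variables_de_longueur2}, every entry of the resulting map is genuinely free of denominators in $a$, so that it defines an element of $G_a$ rather than merely of $G_{ab}$. This is where the hypothesis $mn>\dots$-style divisibility—here the congruence modulo $a^m$ together with the explicit form $v=x+aQ(a^my+f_b(x))$—must be used to its full strength, matching $a^my$ against the shift by $f_b(x)/a^m$ so that the inner argument of $Q$ lands in $\kk[a,b^{\pm1}][x,y]$ and the outer division by $a^m$ is legitimate. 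Once the polynomiality in $a$ is confirmed, the equality of first components (namely $x$) and of Jacobian determinants ($1$) is routine, and the conclusion follows formally from Lemma~\ref{Lem:isomorphic-A2bundles}.
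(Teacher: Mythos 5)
Your skeleton (reduce to Lemma~\ref{Lem:isomorphic-A2bundles}, get an automorphism with first component $v=x+aQ(a^my+f_b(x))$ via Lemma~\ref{lemme:variables_de_longueur2}, then show the leftover factor lies in $G_a$) is close to the paper's, but your third step contains a genuine error, on two levels. First, you have the two groups backwards: $G_a$ is the automorphism group of $U_a\times\A^2$, whose elements have components in $\kk[a^{\pm1},b][x,y]$, so membership in $G_a$ allows negative powers of $a$ and forbids negative powers of $b$; the condition you verify, ``no negative powers of $a$ survive'', is the membership condition for $G_b$, not $G_a$. Second, and independently of labels, the verification you propose in step 2 fails for your choice of $\beta$. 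If $\varphi$ is the automorphism produced by the proof of Lemma~\ref{lemme:variables_de_longueur2}, its second component is $y+(f_b(x)-g(v))/a^m$ where $g$ is the polynomial \emph{constructed} in that proof, and a direct computation gives
\[
T_{g}\circ\varphi\circ T_{f}^{-1}=\Bigl(x+aQ(a^my),\ y+\frac{(g_b-g)\bigl(x+aQ(a^my)\bigr)}{a^m}\Bigr),
\]
where $T_f=(x,y+f_b(x)/a^m)$ and $T_g=(x,y+g_b(x)/a^m)$. The hypothesis does force $a^m$ to divide $g_b-g$ in $\kk[a,b^{\pm1}][x]$ (both satisfy the same congruence, and the substitution $x\mapsto x+aQ(f_b(x))$ is invertible modulo $a^m$), so this leftover map is regular on $U_b\times\A^2$, i.e.\ it lies in $G_b$; but $(g_b-g)/a^m\in\kk[a,b^{\pm1}][x]$ in general has denominators in $b$, so it does \emph{not} lie in $G_a$. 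A concrete instance: if $g_b=g+a^m b^{-1}$, the hypothesis holds and your leftover is $(x+aQ(a^my),\,y+b^{-1})\notin G_a$. You then end up with $T_g=\alpha\circ T_f\circ\beta^{-1}$ where \emph{both} factors lie in $G_b$, and Lemma~\ref{Lem:isomorphic-A2bundles} gives nothing.

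The missing idea is that one cannot take the automorphism handed over by Lemma~\ref{lemme:variables_de_longueur2}; one must build the second component from the \emph{given} $g_b$ rather than from the constructed $g$. The paper sets $\gamma=\bigl(v,\ y+\frac{f_b(x)-g_b(v)}{a^m}\bigr)$ and $\alpha=(x-aQ(a^my),y)$, checks directly that $\alpha\circ T_g\circ\gamma=T_f$ (here $\alpha\in G_a$ is immediate, as it has no denominators at all), and uses the congruence exactly once, on the $G_b$ side: since $g_b(v)\equiv g_b(x+aQ(f_b(x)))\equiv f_b(x)\pmod{a^m}$, the second component of $\gamma$ has no poles along $a=0$, so $\gamma\in G_b$ (with Lemma~\ref{lemme:variables_de_longueur2} guaranteeing that $v$ is a $\kk[a,b^{\pm1}]$-variable, and a Jacobian argument, so that $\gamma$ is indeed an automorphism of $U_b\times\A^2$). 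In short, the congruence's job is to place $\gamma$ in $G_b$, not to place the leftover factor in $G_a$; with your choice of $\beta$ the leftover genuinely fails to be in $G_a$, so the proof as written does not go through.
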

 
\begin{proof}Define $v=x+aQ(a^my+f_b(x))$. By  Lemma~\ref{lemme:variables_de_longueur2}, $v$ is a $\k[a,b^{\pm1}]$-variable of $\k[a,b^{\pm 1},x,y]$. We also define $\alpha=(x-aQ(a^my),y)\in G_a$ and  $\gamma=(v,y+\frac{f_b(x)-g_b(v)}{a^m})\in G_{ab}$. Since $\alpha\circ (x,y+\frac{g_b(x)}{a^m})\circ \gamma=(x,y+\frac{f_b(x)}{a^m})$, the result will follow from Lemma~\ref{Lem:isomorphic-A2bundles} if we prove that $\gamma$  is actually an element of $ G_b$. As $(x,y+f_b(x)/a^m)\in G_{ab}$ has  Jacobian $1$, the same holds for $(v,y+f_b(x)/a^m)\in G_{ab}$ and for $\gamma\in G_{ab}$. To obtain that $\gamma\in G_b$ as desired, it suffices to observe that  $y+\frac{f_b(x)-g_b(v)}{a^m}\in \k[a^{\pm 1},b][x,y]$, as $g_b(v)\equiv g_b(x+aQ(f_b(x)))\equiv f_b(x)\pmod{a^m}$.
\end{proof} 
 

Although we unfortunately did not succeed to address the case of the first V\'en\'erau polynomial with our techniques, Proposition~\ref{prop:procedure_transitions} does lead to the following three results. Recall that the first  V\'en\'erau polynomial is a $\k[x]$-variable  if and only if the $\A^2$-bundle $\rho_{f_1}\colon X_{f_1}\to \A^2_*$ is isomorphic to the trivial bundle, where
\[f_1(x)=\frac{x}{ab^2}-\frac{x^2}{a^3b}-\frac{x^3}{a^2b^2}-\frac{x^4}{ab^3}.\]
 
 \begin{example}\label{Example:almost_V1}
 Suppose that $\mathrm{char}(\k)\not=2$. Then, one can apply Proposition~\ref{prop:procedure_transitions} with $Q(x)=\frac{x}{2}$ to show that   the bundle $\rho_g$ where
 \[g(x)=\frac{x}{ab^2}-\frac{x^2}{a^3b}-\frac{x^3}{a^2b^2}-\frac{5}{4}\cdot\frac{x^4}{ab^3}\]
 is isomorphic to the bundle $\rho_f$ where 
 \[f(x)=f_3(x)=\frac{x}{ab^2}-\frac{x^2}{a^3b}.\]
 Indeed, writing $f_b=a^3f,g_b=a^3g\in \k[a,b^{\pm1}][x]$, a straightforward calculation gives $g_b(x+aQ(f_b(x)))\equiv f_b(x)\pmod{a^3}$.
 
 As $\rho_{f_3}$ is a trivial bundle, the same holds for $\rho_g$. Note that $g$ differs from $f_1$ only by the coefficient $\frac{5}{4}$ in its last summand. 
 \end{example}
 
\begin{example}\label{Example:simplify_V1degree3}Assume that $\mathrm{char}(\k)\not=2$ and that $5$ is a square in $\k$. Then, one can apply Proposition~\ref{prop:procedure_transitions} with $Q(x)=\frac{1+\xi}{2}x$ where $\xi=\frac{1}{\sqrt{5}}$ to show that  the $\A^2$-bundles $\rho_{f_1}$ and $\rho_g$ are isomorphic, where \[g(x)=\frac{x}{ab^2}-\frac{x^2}{a^3b}+\frac{\xi x^3}{a^2b^2}\]
 is of degree three in $x$. 
 Indeed, writing $f_b=a^3f_1,g_b=a^3g\in \k[a,b^{\pm1}][x]$, a straightforward calculation gives $f_b(x+aQ(g_b(x)))\equiv g_b(x)\pmod{a^3}$.
\end{example}

\begin{example}\label{Example:simplify_V1degree4}Assume that $\mathrm{char}(\k)\not=2$. Then, one can apply Proposition~\ref{prop:procedure_transitions} with $Q(x)=\frac{x}{2}$ to show   that the $\A^2$-bundles $\rho_{f_1}$ and $\rho_g$ are isomorphic, where
 \[g(x)=\frac{x}{ab^2}-\frac{x^2}{a^3b}+\frac{1}{4}\frac{x^4}{ab^3}\]
 has the same degree than $f_1$ but with one summand  less. 
 Indeed, writing $f_b=a^3f_1,g_b=a^3g\in \k[a,b^{\pm1}][x]$, a straightforward calculation gives $f_b(x+aQ(g_b(x)))\equiv g_b(x)\pmod{a^3}$.
\end{example}
 
\section{Local triviality on the blow-up}\label{Sec:BlowUp}
 
By Theorem~\ref{Theorem:fcs_de_transition}, every map 
\[\begin{array}{rrcl}
\pi_{P,n}\colon& \A^4& \to & \A^2\\
&(x,y,z,u) & \mapsto & (x,v_{P,n})\end{array}\]
has the structure of an $\A^2$-bundle over the punctured affine plane. In fact, a stronger fact holds: every map $\pi_{P,n}$ yields an $\A^2$-bundle over $\A^2$ blown-up at the origin.

\begin{theorem}\label{Thm:BlowUp}
If we denote by $\epsilon\colon \hat\A^2\to \A^2$ the blow-up of $(0,0)\in \A^2$ and by $\eta\colon \hat\A^4\to \A^2$ the blow-up of $x=y=0$, then the pull-back of $\pi_{P,n}$ by $\epsilon$
\[\hat{\pi}_{P,n}\colon\A^4 \times_{\A^2} \hat\A^2\to \hat\A^2\]
is a $($locally trivial$)$ $\A^2$-bundle. Moreover, $\A^4 \times_{\A^2} \hat\A^2\to \A^2$ is simply the blow-up of $x=y=0$.
\end{theorem}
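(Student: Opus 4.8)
The plan is to treat the two assertions separately, dealing first with the ``moreover'' part, which identifies the total space, and then with the local triviality, which is the substantial point.

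For the identification of $\A^4\times_{\A^2}\hat\A^2$, the key is the elementary observation that the two generators $x$ and $v_{P,n}$ of the ideal of the centre pull back to the same ideal as $x$ and $y$. Indeed $v_{P,n}-y=x^n(xz+y(yu+P(z)))\in(x)$, so that $(x,v_{P,n})=(x,y)$ in $\k[x,y,z,u]$. Since the fibre product of a blow-up is the blow-up of the pulled-back ideal, we get $\A^4\times_{\A^2}\hat\A^2=\mathrm{Bl}_{(x,v_{P,n})}\A^4=\mathrm{Bl}_{(x,y)}\A^4=\hat\A^4$, which is exactly the ``moreover'' statement. I would record, for later use, the two resulting affine charts: on the chart $y=x\lambda$ the map $\hat\pi_{P,n}$ becomes $(x,\lambda,z,u)\mapsto(x,W)$ with $W=v_{P,n}/x=\lambda+x^n\Omega$ and $\Omega=z+\lambda P(z)+x\lambda^2u$, and symmetrically on the chart $x=y\mu$; in both cases the two displayed functions are the coordinates on the matching chart of $\hat\A^2$.

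For the local triviality I would first dispose of everything away from the exceptional curve $E$. As $\epsilon$ is an isomorphism over $\A^2_*=\hat\A^2\setminus E$, the restriction of $\hat\pi_{P,n}$ there is identified with $\pi_{P,n}|_{\A^2_*}$, which is a locally trivial $\A^2$-bundle by Theorem~\ref{Theorem:fcs_de_transition}. Hence it only remains to trivialise $\hat\pi_{P,n}$ over an open cover of $E$, and for this I would work in the two charts above. In the chart $y=x\lambda$ the task is to complete the slope $W$ to a coordinate system of $\k[x][\lambda,z,u]$ over $\k[x]$, i.e.\ to show that $W$ is a $\k[x]$-variable with a complementary copy of $\A^2$; the other chart is symmetric, and the two charts together cover $E$. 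Note that this chartwise statement is genuinely weaker than triviality over a whole neighbourhood of $E$: gluing the two trivialisations into one is equivalent to the full triviality of $\hat\pi_{P,n}$, hence to the open Dolgachev--Weisfeiler problem for $\pi_{P,n}$, as already remarked before the theorem.

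The hard part is precisely this chartwise completion. The naive idea of re-using the trivialisation $\varphi_{P,n}$ of Lemma~\ref{Lemma:triviality over Ua} fails: after the substitution $y=x\lambda$ its last component equals $(xu-\mu Q)/x^2$, where $\mu=x\lambda u+P(z)$, $Q=(P(\Omega)-P(z))/(\Omega-z)$ and $\Omega-z=\lambda\mu$, so it has a pole of order two along $\{x=0\}$. No rescaling by powers of $x$ removes this pole while keeping the Jacobian a unit; concretely, multiplying by $x^2$ makes the whole $u$-column of the Jacobian vanish on $\{x=0\}$, so a single blow-up of the base provides only one power of $x$ of resolution whereas the pole needs two. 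I would therefore construct the trivialisation directly, exploiting the extra room given by the exceptional coordinate $\lambda$: mimic the factorisation $\varphi_{P,n}=\varphi_4\circ\varphi_3\circ\varphi_2\circ\varphi_1$, replacing each elementary factor by a blow-up-adapted one whose denominators are carried by $\lambda$ rather than by $x$, and recognise the outcome as a variable via Lemma~\ref{lemme:variables_de_longueur2}. The main obstacle I anticipate is to make this absorption work at the two points of $E$ lying in a single chart (the directions $[x:y]=[1:0]$ and $[0:1]$), where $x$ and $\lambda$ degenerate simultaneously and the simple-minded fibre coordinates $(z,u)$ genuinely fail (the map $(x,\lambda,z,u)\mapsto(x,W,z,u)$ is two-to-one because $W$ is quadratic in $\lambda$ with leading term $x^{n+1}\lambda^2u$, for every $n$); this fiddly but elementary verification is the crux on which the whole statement rests.
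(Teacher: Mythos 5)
Your ``moreover'' part is fine and matches the paper's (since $v_{P,n}\equiv y\pmod{x}$, the ideals $(x,v_{P,n})$ and $(x,y)$ coincide, and this regular sequence identifies the fibre product with the blow-up of $\A^4$ along $x=y=0$), and your reduction of local triviality to a chartwise statement --- that $W=v_{P,n}/x$ be completed to a coordinate system over $\kk[x]$ on the chart $y=x\lambda$, and symmetrically on the other chart --- is a legitimate reformulation; so is your remark that gluing the two chartwise trivialisations into one is equivalent to the open Dolgachev--Weisfeiler problem for $\pi_{P,n}$. The genuine gap is that you never prove the chartwise statement. You explain why the naive candidate fails (the trivialisation $\varphi_{P,n}$ of Lemma~\ref{Lemma:triviality over Ua} acquires a pole of order two along the exceptional locus, which no rescaling by powers of $x$ removes), and then only announce a strategy (``mimic the factorisation\dots\ with blow-up-adapted factors''), explicitly deferring what you yourself call ``the crux on which the whole statement rests''. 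That crux \emph{is} the theorem: nothing in the proposal establishes it, and it is not evident that your sketched absorption of denominators into $\lambda$ can be carried out. (A minor confusion in your anticipated obstacle: the directions $[1:0]$ and $[0:1]$ never lie in a single chart; each chart of $\hat\A^2$ contains exactly one of them.)

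The paper fills exactly this gap, and by a different mechanism: not a chart-by-chart repair of $\varphi_{P,n}$, but the bivariable machinery of Sections~\ref{Sec:Bivariables}--\ref{section:construction_of_bivariables}. From the bivariable $\omega=ax+b^2y+bP(x)$ one gets explicit automorphisms $\alpha=\left(\omega,\frac{y}{a}+\frac{P(x)-P(a^{-1}\omega)}{ab}\right)\in G_a$ and $\beta=(\omega,-b^{-2}x)\in G_b$ of the trivial bundles over $U_a$ and $U_b$. Composing the two trivialisations of $\pi_{P,n}$ furnished by Theorem~\ref{Theorem:fcs_de_transition} with $\alpha^{-1}$ and $\beta^{-1}$ replaces the transition function $(x,y+f_m(x))$ by $\alpha^{-1}\circ(x,y+f_m(x))\circ\beta$, and Lemma~\ref{Lemm:TransitionFuncabba}\ref{Fcmm} computes that this map and its inverse have all their components in $\kk[a,b,\frac{a}{b},\frac{b}{a},x,y]$, i.e.\ are regular on the overlap $\Spec(\kk[a,b,\frac{a}{b},\frac{b}{a}])$ of the two charts of $\hat\A^2$. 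This regularity statement is precisely what your proposal leaves open: it yields trivialisations over the full charts $\Spec(\kk[a,\frac{b}{a}])$ and $\Spec(\kk[b,\frac{a}{b}])$ --- in your language, it exhibits $W$ as a $\kk[x]$-variable on each chart --- from which the local triviality of $\hat\pi_{P,n}$ follows. To complete your argument you would have to prove an analogue of Lemma~\ref{Lemm:TransitionFuncabba}, or import it; without that computation the proposal is a reduction of the theorem, not a proof of it.
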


\begin{proof}
The blow-up $\epsilon\colon \hat\A^2\to \A^2$ of the origin can be seen at the projection $((a,b),[A:B])\mapsto (a,b)$, where
\[\hat{\A}^2=\{((a,b),[A:B])\in \A^2\times \mathbb{P}^1\mid aB=bA\}.\]
We consider the two open subsets defined by $A\not=0$ and by $B\not=0$. They correspond to $\Spec(\kk[a,\frac{b}{a}])$ and $\Spec(\kk[b,\frac{a}{b}])$ respectively, and their intersection is isomorphic to $\Spec(\kk[a,b,\frac{a}{b},\frac{b}{a}])$. The transition function $\alpha^{-1}\circ (x,y+f(x))\circ\beta\in G_{ab}$ computed in Lemma~\ref{Lemm:TransitionFuncabba}\ref{Fcmm} is then an isomorphism over this intersection. This proves that $\hat{\pi}_{P,n}$ is a locally trivial $\A^2$-bundle. Moreover, as $\hat\A^2\to \hat\A^2$ is the blow-up of $(0,0)$, the morphism $\A^4 \times_{\A^2} \hat\A^2\to \A^2$ is the blow-up of $(\pi_{P,n})^{-1}(0,0)$, which is the surface given by $x=y=0$.
\end{proof}

\begin{lemma}\label{Lemm:TransitionFuncabba} 
Let $n\ge 1$, $P\in \k[z]$ and  let $\omega\in \k[a,b,x,y]$ be defined by $\omega=ax+b^2y+bP(x)$. For every $m\ge 1$, define  \[f_m=\frac{x}{ab^2}-\frac{1}{ab^m}\cdot \frac{b^m-(a^nx)^m}{b-a^nx}P(\frac{x}{a})\in \kk[a^{\pm1},b^{\pm1}][x].\] Then, the following hold:
\begin{enumerate}
\item\label{Fcm1}
The elements $\alpha=(\omega,\dfrac{y}{a}+\dfrac{P(x)-P(a^{-1}\omega)}{ab})$ 
 and $\beta=(\omega,-b^{-2}x)$ belong to the groups $G_a$ and to $G_b$, respectively, and they satisfy that $\alpha^{-1}\circ (x,y+f_1(x))\circ\beta=(x,y).$
 \item\label{Fcmm}
 For each $m\ge 1$, the  components of $\alpha^{-1}\circ (x,y+f_m(x))\circ\beta\in G_{ab}$, as well as the  components of its inverse, all belong  to the ring $\kk[a,b,\frac{a}{b},\frac{b}{a},x,y]$.
\end{enumerate}
 \end{lemma}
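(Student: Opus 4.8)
The plan is to dispatch part \ref{Fcm1} by direct computation and then leverage the resulting relation to reduce the whole of part \ref{Fcmm} to a single weight bookkeeping.

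For part \ref{Fcm1}, I observe that $\omega$, $\alpha$, $\beta$ and $f_1$ coincide with the bivariable $ax+b^2y+bP(x)$ and its associated data from Example~\ref{Ex:m=1} (after rewriting $x+\tfrac{b^2y+bP(x)}{a}=a^{-1}\omega$), so the assertion is essentially already established there. I would in any case re-verify it by exhibiting the inverses: $\beta=(\omega,-b^{-2}x)$ is inverted by $(x,y)\mapsto\bigl(-b^2y,\,b^{-2}x+ay-b^{-1}P(-b^2y)\bigr)\in G_b$, so $\beta\in G_b$; solving $\alpha=(\omega,\tau_a)$ for $(x,y)$ gives $x=a^{-1}\omega-b^2\tau_a-a^{-1}bP(a^{-1}\omega)$ and $y=a\tau_a-\frac{P(x)-P(a^{-1}\omega)}{b}$, and since $x-a^{-1}\omega$ is divisible by $b$ both lie in $\kk[a^{\pm1},b][x,y]$ (using $\Jac(\alpha)=1$ to see it is a genuine element of $G_a$). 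The identity $\alpha^{-1}\circ(x,y+f_1)\circ\beta=(x,y)$ then reduces, on second components, to the substitution $-b^{-2}x+f_1(\omega)=\frac{y}{a}+\frac{P(x)-P(a^{-1}\omega)}{ab}$, which I would check directly using $\tfrac{\omega}{ab^2}=\tfrac{x}{b^2}+\tfrac{y}{a}+\tfrac{P(x)}{ab}$.

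The crucial consequence of part \ref{Fcm1} is the relation $(x,y+f_1)\circ\beta=\alpha$. Setting $g_m:=f_m-f_1$ and using that the triangular maps $(x,y+f_\bullet)$ fix $x$ and add functions of $x$, I would rewrite the transition function as a conjugate of a single triangular translation:
\[
\alpha^{-1}\circ(x,y+f_m)\circ\beta=\alpha^{-1}\circ(x,y+g_m)\circ(x,y+f_1)\circ\beta=\alpha^{-1}\circ(x,y+g_m)\circ\alpha.
\]
Since the inverse is the conjugate of $(x,y-g_m)$ by $\alpha$, it suffices to treat $\pm g_m$ at once, and I only ever need the explicit $\alpha^{-1}$ from part \ref{Fcm1}. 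Summing the geometric part of $f_m$ gives the closed form $g_m=-P(\tfrac{x}{a})\sum_{j=1}^{m-1}\frac{a^{nj-1}x^j}{b^{j+1}}$. Plugging $\alpha^{-1}$ into the conjugate and cancelling the "constant" parts via $\alpha^{-1}\circ\alpha=\mathrm{id}$, I expect the two components of $\alpha^{-1}\circ(x,y+g_m)\circ\alpha$ to simplify to
\[
x-b^2g_m(\omega)\qquad\text{and}\qquad y+a\,g_m(\omega)-\frac{P(x-b^2g_m(\omega))-P(x)}{b}.
\]

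It then remains to show that these expressions, a priori only in $\kk[a^{\pm1},b^{\pm1}][x,y]$, in fact lie in $R[x,y]$ with $R=\kk[a,b,\tfrac{a}{b},\tfrac{b}{a}]$. The tool is the weight grading on $\kk[a^{\pm1},b^{\pm1}][x,y]$ given by $\deg a=\deg b=1$, $\deg x=\deg y=0$: under it $R[x,y]$ is precisely the span of the monomials of nonnegative weight, and $\omega=ax+b^2y+bP(x)$ has all monomials of weight $\ge1$, hence $\omega^s$ of weight $\ge s$ and $P(a^{-1}\omega)$ of weight $\ge0$. Feeding these bounds into the closed form of $g_m$, a direct exponent count shows that every monomial of $b^2g_m(\omega)$ has weight $\ge1$ and every monomial of $a\,g_m(\omega)$ and of $b\,g_m(\omega)$ has weight $\ge0$; so all three lie in $R[x,y]$. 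For the last summand I would write $P(X)-P(x)=(X-x)\widetilde Q(X,x)$ with $\widetilde Q\in\kk[X,x]$ and set $X=x-h$, $h:=b^2g_m(\omega)$, obtaining $P(x-h)-P(x)=-h\,S$ with $S\in R[x,y]$; then $-\frac{P(x-h)-P(x)}{b}=\frac{h}{b}S=(b\,g_m(\omega))\,S\in R[x,y]$. This yields both components in $R[x,y]$, and the same computation with $g_m$ replaced by $-g_m$ handles the inverse. The main obstacle is exactly this final bookkeeping: one must verify that the negative powers of $b$ carried by $f_m$ are always accompanied by enough positive powers of $a$ (and conversely), so that the result lands in the smaller ring $R$ rather than merely in $\kk[a^{\pm1},b^{\pm1}]$, and that the divisibility of $P(x-h)-P(x)$ by $h=b^2g_m(\omega)$ cancels the stray $b^{-1}$ in the last summand.
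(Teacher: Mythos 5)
Your proof is correct and follows essentially the same route as the paper's: part \ref{Fcm1} by direct substitution, and part \ref{Fcmm} by factoring $(x,y+f_m)=(x,y+g_m)\circ(x,y+f_1)$ and using $(x,y+f_1)\circ\beta=\alpha$ to rewrite the transition function as a conjugate of the triangular map $(x,y+g_m)$ --- your displayed component formulas agree exactly with those computed in the paper (conjugating by $\alpha$ or by $\beta$ gives the same map, since triangular translations commute). Your two finishing touches --- the weight grading $\deg a=\deg b=1$, $\deg x=\deg y=0$ identifying $\kk[a,b,\tfrac{a}{b},\tfrac{b}{a}][x,y]$ with the nonnegative-weight part, and obtaining the inverse by replacing $g_m$ with $-g_m$ --- are somewhat more systematic than the paper's ``straightforward to check'' and its Jacobian-determinant argument for the inverse, but they are refinements of the same strategy rather than a different route.
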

\begin{proof}
Assertion~\ref{Fcm1} is straightforward to check. Let us denote $T_m=(x,y+f_m(x))\in G_{ab}$ for each $m\ge 1$. In particular, $T_1=(x,y+\frac{x}{ab^2}-\frac{1}{ab}P(\frac{x}{a}))\in G_{ab}$ and we observe that 
\begin{align*}
(T_1)^{-1}\circ T_m&=\Big(x,y+f_m(x)-\frac{x}{ab^2}+\frac{1}{ab}P(\frac{x}{a})\Big)\\
&=\Big(x,y+(\frac{1}{ab}-\frac{1}{ab^m}\cdot\frac{b^m-(a^nx)^m}{b-a^nx})P(\frac{x}{a})\Big)\\
&=\Big(x,y+(\frac{1}{ab}-\frac{1}{ab^m}\cdot\sum_{k=0}^{m-1}b^{m-1-k}(a^nx)^k)P(\frac{x}{a})\Big)\\
&=\Big(x,y-\frac{1}{ab}\sum_{k=1}^{m-1}(\frac{a^nx}{b})^k\cdot P(\frac{x}{a})\Big).
\end{align*}

Since, $\beta^{-1}=(-b^2y, \frac{x}{b^2}+ay-\frac{1}{b}\cdot P(-b^2y))$ and  $T_1=\alpha\circ\beta^{-1}$, we can now calculate  
\begin{align*}
\alpha^{-1}\circ T_m\circ\beta&=\beta^{-1}\circ(T_1)^{-1}\circ T_m\circ\beta\\
&=\beta^{-1}\circ\Big(x,y-\frac{1}{ab}\sum_{k=1}^{m-1}(\frac{a^nx}{b})^k\cdot P(\frac{x}{a})\Big)\circ\beta\\
&=\beta^{-1}\circ\Big(\omega,-\frac{x}{b^2}-\frac{1}{ab}\sum_{k=1}^{m-1}(\frac{a^n\omega}{b})^k\cdot P(\frac{\omega}{a})\Big)\\
&=\Big(x+\frac{b}{a}\sum_{k=1}^{m-1}(\frac{a^n\omega}{b})^k\cdot P(\frac{\omega}{a}), \frac{\omega}{b^2}-\frac{ax}{b^2}-\frac{1}{b}\sum_{k=1}^{m-1}(\frac{a^n\omega}{b})^k\cdot P(\frac{\omega}{a})\\
&\qquad\qquad\qquad\qquad -\frac{1}{b}P\left(x+\frac{b}{a}\sum_{k=1}^{m-1}(\frac{a^n\omega}{b})^k\cdot P(\frac{\omega}{a})\right)\Big)\\
&=\Big(x+\frac{b}{a}\sum_{k=1}^{m-1}(\frac{a^n\omega}{b})^k\cdot P(\frac{\omega}{a}), y+-\frac{1}{b}\sum_{k=1}^{m-1}(\frac{a^n\omega}{b})^k\cdot P(\frac{\omega}{a})\\
&\qquad\qquad\qquad+\frac{1}{b}P(x)-\frac{1}{b}P\left(x+\frac{b}{a}\sum_{k=1}^{m-1}(\frac{a^n\omega}{b})^k\cdot P(\frac{\omega}{a})\right)\Big)
\end{align*}
As $\frac{\omega}{a}\in \kk[a,b,\frac{a}{b},\frac{b}{a}]$, it is straightforward to check that the two components of the above map are contained in $\kk[a,b,\frac{a}{b},\frac{b}{a}]$. Moreover, since the Jacobian determinants of  $\alpha$, $\beta$ and $T_m$ are equal to $1$, the Jacobian determinant of $\alpha^{-1}\circ T_m\circ\beta$ is also equal to $1$ and the components of its inverse also belong to the same ring.
\end{proof}

\section{The varieties \texorpdfstring{$X_f$}{Xf} for small denominators} \label{Sec:Xfmn1}
 In this section, we study the varieties $X_f$ introduced in Definition~\ref{Defi:Xf}. One may always choose $m,n\ge 0$ such that $a^mb^nf(x)\in \k[a,b,x]$. Our main result is Proposition~\ref{Prop:Casemn1} which gives a criterion to decide whether the corresponding bundle $\rho_f\colon X_f\to \A^2_*$ is a trivial $\A^2$-bundle, in the special case where one of the two integers $m$ or $n$ is at most $1$.
 
We will proceed as follows. We will first realise $X_f$ as an open subset of an affine hypersurface $Y\subseteq \A^5$ (see Lemma~\ref{Lemm:HypersurfaceA5}). Then, we will study  when the morphism $\rho_f\colon X_f\to \A^2_*$ extends to a (locally) trivial $\A^2$-bundle $Y\to \A^2$ (see Lemma~\ref{lemm:piYtrivial}). Afterwards, we will compute the ring of regular functions on a variety $X_f$ (see Proposition~\ref{Prop:RingXf}) and show that, in the case where $m=1$ or $n=1$,  this ring is equal to the ring of regular functions  of $Y$. We will finally use this result to prove Proposition~\ref{Prop:Casemn1}. 


\begin{lemma}\label{Lemm:HypersurfaceA5}
For each $f=f(a,b,x)\in \kk[a^{\pm 1},b^{\pm 1}][x]$ and for all integers $m,n\ge 0$ such that $P(a,b,x)=a^mb^nf\in\k[a,b,x]$, the variety $X_f$ admits an open embedding into the hypersurface $Y\subset\A^5=\Spec(\kk[a,b,x,u,v])$ defined by the equation
\[a^{m}u-b^{n}v=P(a,b,x).\] 
More precisely, $X_f$ is isomorphic to $Y\setminus\{a=b=0\}$, via the isomorphisms
\[\begin{array}{l}
\varphi\colon U_a\times \A^2\to Y\backslash \{a=0\}, ((a,b), (x,y))
\mapsto (a,b,x,b^ny+a^{-m}P(a,b,x),a^my).\\
\psi\colon U_b\times \A^2\to Y\backslash \{b=0\}, ((a,b), (x,y))
\mapsto (a,b,x,b^ny,a^my-b^{-n}P(a,b,x)).\end{array}\]
\end{lemma}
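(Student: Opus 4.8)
The plan is to build the claimed isomorphism one chart at a time and then verify that the two charts are glued exactly by the transition function of Definition~\ref{Defi:Xf}. Write $Y_a=Y\setminus\{a=0\}$ and $Y_b=Y\setminus\{b=0\}$, so that $Y_a\cup Y_b=Y\setminus\{a=b=0\}$ and $Y_a\cap Y_b=Y\setminus\{ab=0\}$. The first step is to check that $\varphi$ and $\psi$ are well-defined morphisms with the stated images. On $U_a$ the function $a$ is invertible, so $a^{-m}P$ is regular and $\varphi$ really is a morphism; substituting its components into the defining equation gives $a^m\bigl(b^ny+a^{-m}P\bigr)-b^n(a^my)=P$, so its image lies in $Y_a$. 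The symmetric computation with $b^{-n}P$ shows that $\psi$ is a morphism into $Y_b$.

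Next I would produce inverses to see that $\varphi$ and $\psi$ are isomorphisms onto $Y_a$ and $Y_b$. On $Y_a$ the relation $v=a^my$ determines $y=a^{-m}v$, and then the defining equation forces $u=b^ny+a^{-m}P$; hence $(a,b,x,u,v)\mapsto((a,b),(x,a^{-m}v))$ is a regular two-sided inverse of $\varphi$. Symmetrically, the relation $u=b^ny$ gives $y=b^{-n}u$ on $Y_b$, so $(a,b,x,u,v)\mapsto((a,b),(x,b^{-n}u))$ inverts $\psi$. Both verifications are short direct substitutions.

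The crucial step is the compatibility of these two isomorphisms with the gluing defining $X_f$. Over the overlap $U_{ab}\times\A^2$ I would compute $\psi^{-1}\circ\varphi$, which sends $((a,b),(x,y))$ to $((a,b),(x,\,b^{-n}(b^ny+a^{-m}P)))=((a,b),(x,\,y+a^{-m}b^{-n}P))$. By the hypothesis $P=a^mb^nf$ this is exactly $((a,b),(x,\,y+f(a,b,x)))$, i.e.\ the transition function of Definition~\ref{Defi:Xf}. This is the one place where the assumption $P=a^mb^nf$ is genuinely used, and it is really the heart of the matter: the same identity is what makes both the $u$- and the $v$-coordinates agree across the overlap, even though the computation itself is elementary. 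Consequently $\varphi$ and $\psi$ glue to a single isomorphism from $X_f$ onto $Y_a\cup Y_b=Y\setminus\{a=b=0\}$, which, composed with the open immersion $Y\setminus\{a=b=0\}\hookrightarrow Y$, is the asserted open embedding. I do not anticipate a serious obstacle here; the only point requiring care is keeping the direction of the gluing consistent so that the comparison map $\psi^{-1}\circ\varphi$, rather than its inverse, is matched against the transition function.
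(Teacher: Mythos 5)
Your proof is correct and takes essentially the same route as the paper: exhibit the explicit inverses $(a,b,x,u,v)\mapsto((a,b),(x,a^{-m}v))$ and $(a,b,x,u,v)\mapsto((a,b),(x,b^{-n}u))$ on the charts $Y\setminus\{a=0\}$ and $Y\setminus\{b=0\}$, then check that $\psi^{-1}\circ\varphi$ equals the transition function $((a,b),(x,y))\mapsto((a,b),(x,y+f))$ of Definition~\ref{Defi:Xf}, so the two charts glue to an isomorphism $X_f\iso Y\setminus\{a=b=0\}$. The only difference is that you spell out the routine verifications (images landing in $Y$, two-sided inverses, direction of the gluing) that the paper compresses into a single observation.
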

\begin{proof}
We observe that the maps $\psi,\varphi$ are isomorphisms, whose  inverse maps are given by 
\[\begin{array}{l}
\varphi^{-1}\colon Y\setminus \{a=0\}\iso U_a\times \A^2, (a,b,x,u,v)\mapsto ((a,b), (x,a^{-m}v))\\
\psi^{-1}\colon Y\setminus \{b=0\}\iso U_a\times \A^2, (a,b,x,u,v)\mapsto ((a,b), (x,b^{-n}u)).\end{array}\]
Moreover, as we obtain the transition function of $X_f$ via the composition
\[\begin{array}{cccc}
\psi^{-1}\circ\varphi\colon& U_{ab}\times\A^2 &\to& U_{ab}\times\A^2\\
&((a,b), (x,y))&\mapsto &((a,b),(x,y+a^{-m}b^{-n}P(a,b,x))),
\end{array}\]  we see that the two maps $\varphi$ and $\psi$ induce inverse isomorphisms between $X_f$ and $Y\setminus\{a=b=0\}$.
\end{proof}

\begin{lemma}\label{lemm:piYtrivial}
Let $m,n\geq1$ and $P\in\kk[a,b,x]$. Define $Q=a^{m}u-b^{n}v-P\in \k[a,b,x,u,v]$ and let $\pi\colon\Spec(\k[a,b,x,u,v]/Q)\to \A^2$ be the morphism defined by $(a,b,x,u,v)\mapsto (a,b)$. Then, the following conditions are equivalent:
\begin{enumerate}
\item \label{PQtrivialA2bundle}
$\pi$ is a trivial $\A^2$-bundle.
\item \label{PQloctrivialA2bundle}
$\pi$ is a locally trivial $\A^2$-bundle.
\item\label{PQfibre0isA2}
The fibre $\pi^{-1}((0,0))$ is isomorphic to $\A^2$.
\item\label{PQPdeg1}
$P(0,0,x)\in \k[x]$ is of degree $1$.
\item\label{PQkabvariable}
$Q$ is a $\kk[a,b]$-variable of $\kk[a,b,x,u,v]$.
\end{enumerate}
\end{lemma}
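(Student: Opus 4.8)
Lemma~\ref{lemm:piYtrivial} — proof plan.

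The plan is to prove the equivalence by establishing a cycle of implications, using the global triviality as the strongest statement and the degree condition \ref{PQPdeg1} as the concrete, checkable hinge. The implications \ref{PQtrivialA2bundle} $\Rightarrow$ \ref{PQloctrivialA2bundle} $\Rightarrow$ \ref{PQfibre0isA2} are essentially formal: a trivial bundle is locally trivial, and the fibre over any point of a locally trivial $\A^2$-bundle is isomorphic to $\A^2$. Similarly \ref{PQtrivialA2bundle} $\Leftrightarrow$ \ref{PQkabvariable} is just the definition of a $\kk[a,b]$-variable unwound: $\pi$ is a trivial $\A^2$-bundle exactly when there is an isomorphism $\Spec(\kk[a,b,x,u,v]/Q)\iso \A^2\times\A^2$ over $\A^2$, which means $\kk[a,b,x,u,v]/Q=\kk[a,b][\bar r,\bar s]$ for two elements $\bar r,\bar s$; lifting, this says precisely that $Q$ is a $\kk[a,b]$-variable. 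So the real content is the loop through \ref{PQfibre0isA2}, \ref{PQPdeg1}, and back up to \ref{PQtrivialA2bundle}.

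First I would treat \ref{PQfibre0isA2} $\Rightarrow$ \ref{PQPdeg1}. Setting $a=b=0$ in $Q$, the fibre $\pi^{-1}((0,0))$ is the closed subscheme of $\Spec(\kk[x,u,v])$ cut out by $P(0,0,x)=0$ (since the terms $a^mu$ and $b^nv$ vanish, using $m,n\ge 1$). If $P(0,0,x)$ has degree $d$ in $x$, this fibre is a union of $d$ planes $\{x=\xi_i\}\times\A^2_{u,v}$ (with multiplicity), so it is reduced and irreducible isomorphic to $\A^2$ only when $d=1$; a constant $P(0,0,x)$ gives the empty set or a non-reduced/disconnected scheme, and $d\ge 2$ gives a reducible or non-reduced fibre, neither isomorphic to $\A^2$. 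I would argue this by comparing connected components, or more robustly by noting that $\A^2$ is irreducible and reduced of dimension $2$ while the fibre fails one of these unless $P(0,0,x)$ is a unit times a linear polynomial.

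The main obstacle, and the heart of the lemma, is the implication \ref{PQPdeg1} $\Rightarrow$ \ref{PQtrivialA2bundle}, i.e.\ producing a genuine global trivialisation (equivalently, showing $Q$ is a $\kk[a,b]$-variable) from the mere degree-one condition. My strategy here is to write $P(a,b,x)=c(a,b)x+P_0(a,b,x)$ where $c(0,0)\neq 0$ and every monomial of $P_0$ either has degree $\ge 2$ in $x$ or is divisible by $a$ or $b$; the point is that modulo the maximal ideal $(a,b)$ the coefficient of $x$ is a nonzero scalar. Since $c(0,0)\in\kk^*$, the element $c(a,b)$ is a unit in the local ring, but I want a global statement over $\kk[a,b]$, so I would instead argue directly. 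The idea is that $Q=a^mu-b^nv-c(a,b)x-P_0$, viewed as a polynomial in $x$, has leading-in-$x$ behaviour controlled by $c(a,b)$; because $c$ is a unit modulo $(a,b)$, one can hope to solve for $x$ as a polynomial in $u,v$ and the other data, exhibiting an explicit $\kk[a,b]$-automorphism of $\kk[a,b,x,u,v]$ sending $Q$ to a coordinate. Concretely, I expect the cleanest route is to invoke the already-proven machinery: by Lemma~\ref{lemme:variables_de_longueur2} and the bivariable/transition-function correspondence of Theorem~\ref{Theorem:BijectionBivariables} together with Lemma~\ref{Lemm:HypersurfaceA5}, the condition $\deg_x P(0,0,x)=1$ forces the transition function of $X_f$ to be trivialisable over a neighbourhood of the origin, whence by Lemma~\ref{Lem:Equivalenttrivialities}-type reasoning the bundle is globally trivial. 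The delicate point to get right is that reducing to the case where $P$ is exactly linear in $x$ (via an automorphism absorbing the higher-order and $(a,b)$-divisible terms) requires checking that such an absorbing automorphism exists over $\kk[a,b]$ and not merely over a localisation — this is where Lemma~\ref{lemme:variables_de_longueur2}, which produces variables over an arbitrary base ring $R=\kk[a,b]$ with $a$ a non-zero-divisor, does the essential work and avoids any appeal to Quillen–Suslin in dimension $>2$.
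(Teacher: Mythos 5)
Your outer scaffolding (the formal implications \ref{PQtrivialA2bundle}$\Rightarrow$\ref{PQloctrivialA2bundle}$\Rightarrow$\ref{PQfibre0isA2} and the fibre computation giving \ref{PQfibre0isA2}$\Rightarrow$\ref{PQPdeg1}) is fine, but there are two real problems. First, the assertion that \ref{PQtrivialA2bundle}$\Leftrightarrow$\ref{PQkabvariable} is ``just the definition unwound'' is false in the direction \ref{PQtrivialA2bundle}$\Rightarrow$\ref{PQkabvariable}: knowing that $\kk[a,b,x,u,v]/(Q)$ is a polynomial ring $\kk[a,b][\bar r,\bar s]$ does not formally produce an automorphism of $\kk[a,b,x,u,v]$ sending $Q$ to a coordinate --- lifting residual coordinates across a hypersurface is an Abhyankar--Sathaye-type problem, open in general. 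This error is harmless only if you run the cycle as \ref{PQtrivialA2bundle}$\Rightarrow$\ref{PQloctrivialA2bundle}$\Rightarrow$\ref{PQfibre0isA2}$\Rightarrow$\ref{PQPdeg1}$\Rightarrow$\ref{PQkabvariable}$\Rightarrow$\ref{PQtrivialA2bundle} (which is exactly what the paper does), using only the easy direction \ref{PQkabvariable}$\Rightarrow$\ref{PQtrivialA2bundle}; but then you must genuinely prove \ref{PQPdeg1}$\Rightarrow$\ref{PQkabvariable}.

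That is where the main gap lies: the hard implication is never proved. Your direct route stops at ``one can hope to solve for $x$'', and your fallback via Theorem~\ref{Theorem:BijectionBivariables} and Lemma~\ref{Lemm:HypersurfaceA5} cannot work. The variety $X_f$ is a bundle over the punctured plane $\A^2_*$, so ``trivialisable over a neighbourhood of the origin'' is meaningless for it; worse, triviality of $X_f$ over $\A^2_*$ implies nothing about $Y=\Spec(\kk[a,b,x,u,v]/Q)$ over $\A^2$: for $f=a^{-3}b^{-2}(a^2x-bx^2)$ (the third V\'en\'ereau polynomial) the bundle $X_f$ is trivial, yet $P(0,0,x)=0$, so $\pi$ is not even an $\A^2$-fibration. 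Moreover, in the paper the comparison between $\mathcal{O}(X_f)$ and $\mathcal{O}(Y)$ (Proposition~\ref{Prop:RingXf}) is deduced \emph{from} this lemma, so that route would also be circular. The missing mechanism is a two-stage application of Lemma~\ref{lemme:variables_de_longueur2}: after an affine change of $x$ making $P(0,0,x)=x$ (note that your decomposition $P=c(a,b)x+P_0$ overlooks a possible constant term), $Q$ becomes, up to harmless sign changes, $x+a^mu+b^nv+aP_1(a,b,x)+bP_2(a,b,x)$; Lemma~\ref{lemme:variables_de_longueur2} applied over the base ring $\kk[a,b,v]$ yields an automorphism $g$ fixing $v$ with $g(x+a^mu+aP_1)=x$, so that $g(Q)=x+b^nv+bP_3(a,b,x,u)$; a second application, now over the base ring $\kk[a,b,u]$, shows that $g(Q)$, hence $Q$, is a $\kk[a,b]$-variable. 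You correctly identified Lemma~\ref{lemme:variables_de_longueur2} as the key tool, but this reduction --- absorbing first the $a$-part in the $(x,u)$-coordinates, then the transformed $b$-part in the $(x,v)$-coordinates --- is the entire content of the implication, and it is absent from your proposal.
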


\begin{proof}

$\ref{PQtrivialA2bundle}\Rightarrow\ref{PQloctrivialA2bundle}$: Because any trivial bundle is locally trivial.

$\ref{PQloctrivialA2bundle}\Rightarrow\ref{PQfibre0isA2}$: As $\pi$ is a locally trivial $\A^2$-bundle, the fibre over $(0,0)$ is isomorphic to $\A^2$.

$\ref{PQfibre0isA2}\Rightarrow\ref{PQPdeg1}$: The fibre over $(0,0)$ is defined by  the equation $P(0,0,x)$. If this fibre is isomorphic to $\A^2$, the polynomial $P(0,0,x)$ is then of degree $1$.

$\ref{PQPdeg1}\Rightarrow\ref{PQkabvariable}$:
We suppose that $P(0,0,x)$ is of degree $1$ and  prove that $Q$ is a $\k[a,b]$-variable of $\k[a,b,x,u,v]$. We may apply elements of $G=\Aut_{\k[a,b]}(\k[a,b,x,u,v])$, since these automorphisms send $\k[a,b]$-variables onto $\k[a,b]$-variables. Applying such an automorphism that sends  $x$ onto $\xi x+\mu$ for suitable  $\xi\in \k^*$ and $\mu\in \k$, we  can suppose that $P(0,0,x)=x$ and  replace $Q$ with
\[Q_1=x+a^mu+b^nv+aP_1(a,b,x)+bP_2(a,b,x)\in\kk[a,b,x,u,v],\]
where $P_1,P_2\in \k[a,b,x]$. Now, Lemma~\ref{lemme:variables_de_longueur2}  implies that $x+a^mu+aP_1(a,b,x)$ is a $\kk[a,b,v]$-variable of $\kk[a,b,v][x,u]$. There is thus $g\in G$ such that $v=g(v)$ and $x=g(x+a^mu+aP_1(a,b,x))$. Hence, $Q_2=g(Q_1)$ is of the form
\[Q_2=x+b^nv+bP_3(a,b,x,u)\in\kk[a,b,x,u,v],\]
where $P_3\in \k[a,b,x,u]$.
 In turn, again by Lemma~\ref{lemme:variables_de_longueur2}, the element $Q_2$ is a $\kk[a,b,u]$-variable of $\kk[a,b,u][x,v]$ and  it is thus in particular  a $\kk[a,b]$-variable.

$\ref{PQkabvariable}\Rightarrow\ref{PQtrivialA2bundle}$: If $Q$ is a $\kk[a,b]$-variable of $\kk[a,b,x,u,v]$, there exists a $\kk[a,b]$-automorphism of $\kk[a,b,x,u,v]$ which sends $Q$ onto $x$. This isomorphism trivialises the $\A^2$-bundle $\pi$.
\end{proof} 
\begin{proposition}\label{Prop:RingXf}For each $f=f(a,b,x)\in \kk[a^{\pm 1},b^{\pm 1}][x]$, the following hold:
\begin{enumerate}
\item\label{RegXf}
The ring $\mathcal{O}(X_f)$ of regular functions on $X_f$ is given on the two charts $U_a \times \A^2$ and $U_{b}\times \A^2$  by $R_a$ and $R_b$, respectively, where
\[R_a=\kk[a^{\pm 1},b,x,y]\cap \kk[a,b^{\pm 1},x,y+f],\]
\[R_b=\kk[a,b^{\pm 1},x,y]\cap \kk[a^{\pm 1},b,x,y-f].\]
\item\label{RegXfInclusion} For all integers $m,n\ge 0$ such that $P=a^mb^nf\in\k[a,b,x]$, we have
\[\kk[a,b,x,a^my,b^ny+a^{-m}P]\subseteq R_a\text{ and }\kk[a,b,x,b^ny,a^my-b^{-n}P]\subseteq R_b.\]
Moreover, the first inclusion is an equality if and only if the second is also an equality.
\item\label{RgXfequalities1}
If $1\in \{m,n\}$ and $P(0,0,x)\not=0$, the inclusions in \ref{RegXfInclusion} are equalities.
\item\label{RgXwhenEq}
Under the assumption that  the inclusions in \ref{RegXfInclusion} are equalities, we have that $X_f$ is a trivial $\A^1$-bundle if and only if $P(0,0,x)=P|_{a=b=0}$ is of degree $1$.
\end{enumerate}
\end{proposition}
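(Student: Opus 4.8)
The plan is to prove the four assertions in order, using throughout the identification of $X_f$ with the open subset $Y\setminus\{a=b=0\}$ of the hypersurface $Y$ furnished by Lemma~\ref{Lemm:HypersurfaceA5}. For assertion~\ref{RegXf} I would argue directly from the construction in Definition~\ref{Defi:Xf}: a regular function on $X_f$ is a pair of regular functions on the charts $U_a\times\A^2$ and $U_b\times\A^2$ agreeing on $U_{ab}\times\A^2$. On the first chart such a function lies in $\kk[a^{\pm1},b,x,y]$, on the second in $\kk[a,b^{\pm1},x,y]$, and rewriting the second chart's fibre coordinate through the transition function $(x,y)\mapsto(x,y+f)$ turns the compatibility condition into the two displayed intersections; reading the gluing from the $U_a$-side gives $R_a$ and from the $U_b$-side gives $R_b$.

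For the inclusions in \ref{RegXfInclusion} it suffices to check the two generators of the $R_a$-side, the $R_b$-side being symmetric. Since $P\in\kk[a,b,x]$, both $a^m y$ and $b^n y+a^{-m}P$ manifestly lie in $\kk[a^{\pm1},b,x,y]$; and writing $w=y+f=y+a^{-m}b^{-n}P$ one computes $a^m y=a^m w-b^{-n}P$ and $b^n y+a^{-m}P=b^n w$, both of which lie in $\kk[a,b^{\pm1},x,w]$, so both generators lie in $R_a$. For the equivalence of the two equalities, set $S_a=\kk[a,b,x,a^m y,b^n y+a^{-m}P]$ and $S_b=\kk[a,b,x,b^n y,a^m y-b^{-n}P]$. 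Under the isomorphisms $\varphi,\psi$ of Lemma~\ref{Lemm:HypersurfaceA5} one has $S_a=\varphi^*\mathcal{O}(Y)$, $R_a=\varphi^*\mathcal{O}(X_f)$, $S_b=\psi^*\mathcal{O}(Y)$, $R_b=\psi^*\mathcal{O}(X_f)$, where $\mathcal{O}(Y)\hookrightarrow\mathcal{O}(X_f)$ is just the restriction map of the open immersion $X_f\cong Y\setminus\{a=b=0\}\hookrightarrow Y$. Thus both inclusions are the single inclusion $\mathcal{O}(Y)\subseteq\mathcal{O}(X_f)$ read off in the two charts, and either is an equality precisely when $\mathcal{O}(Y)=\mathcal{O}(X_f)$, which is the asserted equivalence.

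The heart of the matter is assertion~\ref{RgXfequalities1}, i.e.\ proving $\mathcal{O}(Y)=\mathcal{O}(X_f)$, and this is the step I expect to be the main obstacle. The key observation is that $S_a[a^{-1}]=\mathcal{O}(U_a\times\A^2)=\kk[a^{\pm1},b,x,y]$ and $S_a[b^{-1}]=\mathcal{O}(U_b\times\A^2)=\kk[a,b^{\pm1},x,w]$ (via $y=a^{-m}(a^m y)$ and $w=b^{-n}(b^n y+a^{-m}P)$), so by \ref{RegXf} we get $R_a=S_a[a^{-1}]\cap S_a[b^{-1}]$, the intersection being taken inside $\kk[a^{\pm1},b^{\pm1},x,y]=S_a[(ab)^{-1}]$. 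Now $S_a\cong\mathcal{O}(Y)$ is a hypersurface ring, hence Cohen--Macaulay, and for such a ring the natural map $S_a\to S_a[a^{-1}]\cap S_a[b^{-1}]$ is an isomorphism if and only if $\operatorname{depth}_{(a,b)}S_a\ge 2$, equivalently $\operatorname{ht}_{S_a}(a,b)\ge 2$, equivalently $\{a=b=0\}\cap Y$ has codimension $\ge2$ in $Y$. This last condition is exactly $P(0,0,x)\neq0$: if $P(0,0,x)$ is the zero polynomial then $\{a=b=0\}\subseteq Y$ is a divisor, whereas if $P(0,0,x)\neq0$ the set $\{a=b=0\}\cap Y$ is cut out inside $\{a=b=0\}$ by the nonzero $P(0,0,x)\in\kk[x]$ and so has codimension $2$. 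The delicate point is precisely the passage from the intersection description of $R_a$ to the equality $R_a=S_a$; the Cohen--Macaulay/depth argument makes it transparent, and one may equivalently phrase it as algebraic Hartogs, using that $Y$ is normal because its singular locus has codimension $\ge2$ (with the hypothesis $1\in\{m,n\}$ this codimension is in fact $\ge3$, which is all that the short normality computation needs; the argument above in fact only uses $P(0,0,x)\neq0$).

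For assertion~\ref{RgXwhenEq} I would transfer triviality between $X_f$ and $Y$; note that the criterion $\deg_x P(0,0,x)=1$ is precisely the one in Lemma~\ref{lemm:piYtrivial}, so the natural statement to prove is that $\rho_f\colon X_f\to\A^2_*$ is a trivial $\A^2$-bundle if and only if $\deg_x P(0,0,x)=1$. Assuming the inclusions are equalities we have $\mathcal{O}(X_f)=\mathcal{O}(Y)$, and $\rho_f$ is the restriction to $\A^2_*$ of $\pi\colon Y\to\A^2$, $(a,b,x,u,v)\mapsto(a,b)$. If $\pi$ is a trivial $\A^2$-bundle its restriction over $\A^2_*$, namely $\rho_f$, is trivial. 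Conversely, if $\rho_f$ is trivial then $\mathcal{O}(X_f)\cong\mathcal{O}(\A^2_*)[\xi,\eta]=\kk[a,b][\xi,\eta]$ as a $\kk[a,b]$-algebra with $\pi$ corresponding to the inclusion $\kk[a,b]\hookrightarrow\mathcal{O}(X_f)$; using $\mathcal{O}(Y)=\mathcal{O}(X_f)$ this exhibits $\mathcal{O}(Y)$ as a polynomial ring in two variables over $\kk[a,b]$ realising $\pi$ as the projection, so $\pi$ is trivial. Hence $\rho_f$ is trivial exactly when $\pi$ is, and by the equivalence of conditions~\ref{PQtrivialA2bundle} and~\ref{PQPdeg1} in Lemma~\ref{lemm:piYtrivial} this holds if and only if $P(0,0,x)=P|_{a=b=0}$ has degree $1$, as required.
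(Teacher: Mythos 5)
Your proposal is correct, but it does not follow the paper's route on the key point. Assertions \ref{RegXf}, \ref{RegXfInclusion} and \ref{RgXwhenEq} are handled essentially as in the paper (for the ``moreover'' of \ref{RegXfInclusion} and the converse in \ref{RgXwhenEq} you work through the identifications $S_a\cong\mathcal{O}(Y)$, $R_a\cong\mathcal{O}(X_f)$ and the ring isomorphism $\mathcal{O}(Y)\cong\kk[a,b][\xi,\eta]$, where the paper uses the transition maps and extends the trivialisation to an isomorphism of $\A^4$ with $Y$ --- same substance). The genuine divergence is in \ref{RgXfequalities1}: the paper proves it by an elementary, explicit computation that really uses $m=1$ --- it reduces to $P=P(0,b,x)$, sets $u=(b^ny+P)/a$, writes an element of $R_a$ as $W/b^s$ with $W\in\kk[a,b,x,u]$, and shows by reduction modulo $b$ and induction on $s$ that $b$ divides $W$ --- whereas you identify $R_a=S_a[a^{-1}]\cap S_a[b^{-1}]=\Gamma\bigl(\Spec(S_a)\setminus V(a,b),\mathcal{O}\bigr)$ and invoke the depth-two extension criterion together with the fact that the hypersurface ring $S_a\cong\mathcal{O}(Y)$ is Cohen--Macaulay, so that sections extend over $V(a,b)$ exactly when this locus has codimension at least two in $Y$, i.e.\ exactly when $P(0,0,x)\neq 0$. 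The paper's argument buys elementarity and self-containedness; yours buys brevity, conceptual clarity, and, notably, a strictly stronger result: the hypothesis $1\in\{m,n\}$ becomes superfluous, so \ref{RgXfequalities1}, and with it Proposition~\ref{Prop:Casemn1}\ref{Propmn1}, hold for all $m,n$ once $P(0,0,x)\neq0$ (this does not contradict the remark following that proposition, since the examples given there have $P(0,0,x)=0$). Two small points to tidy up in a final write-up: check that the vanishing ideal of $Y$ is principal so that $\mathcal{O}(Y)$ is genuinely a hypersurface ring (it is: $Q=a^mu-b^nv-P$ is linear in $u$ with coefficients $a^m$ and $-(b^nv+P)$ of gcd $1$, hence irreducible); and note that in \ref{RgXwhenEq} you prove the statement with ``trivial $\A^2$-bundle'' in place of ``trivial $\A^1$-bundle'' --- this is exactly what the paper's own proof establishes, what Lemma~\ref{lemm:piYtrivial} provides, and what Proposition~\ref{Prop:Casemn1} uses, so your reading of the intended statement is the right one.
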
 
 
\begin{proof}Denote by $\psi\colon U_a \times \A^2\dasharrow U_b\times \A^2$ the birational map $((a,b),(x,y))\mapsto ((a,b),(x,y+f(a,b,x))$ and by $\varphi$ its inverse.

\ref{RegXf}: A regular function on $X_f$ is a function that is regular on $U_a \times \A^2$ and $U_{b}\times \A^2$.
Regular functions  on $U_a\times \A^2$ correspond to the ring $\mathcal{O}(U_a\times \A^2)\cap \psi^*(\mathcal{O}(U_b\times \A^2))$. Similarly, regular functions  on $U_b\times \A^2$ correspond to $\mathcal{O}(U_b\times \A^2)\cap \varphi^*(\mathcal{O}(U_a\times \A^2))$.
 As $\mathcal{O}(U_a\times \A^2)=\kk[a^{\pm 1},b,x,y]$ and $\mathcal{O}(U_b\times \A^2)=\kk[a,b^{\pm 1},x,y]$, Assertion \ref{RegXf} follows.


\ref{RegXfInclusion} As $P=a^mb^nf\in \k[a,b,x]$, we have $b^{-n}P=a^mf\in \k[a,b^{\pm1}, x]$ and $a^{-m}P=b^mf\in \k[a^{\pm1},b, x]$. Hence, $b^n(y+f)=b^ny+a^{-m}P\in R_a$ and $a^m(y-f)=a^my-b^{-n}P\in R_b$. This shows that the two inclusions in Assertion \ref{RegXfInclusion} hold. Moreover, since $\varphi^*(b^ny+a^{-m}P)=b^ny$ and $\psi^*( a^my-b^{-n}P)=a^my$, it follows these inclusions are either both strict or both an equality.

\ref{RgXfequalities1}:
We now assume that $m=1$ and $P(0,0,x)\not=0$. This implies that $P(0,b,x)\in \k[b,x]\setminus b\k[b,x]$. We want to prove that $R_a\subseteq \kk[a,b,x,ay,b^ny+a^{-1}P]$. Writing $P=P(0,b,x)+aS$ for some suitable $S\in \k[a,b,x]$, we obtain that $R_a=\kk[a^{\pm 1},b,x,y]\cap \kk[a,b^{\pm 1},x,y+\frac{P_0}{ab^n}]$ and $\kk[a,b,x,ay,b^ny+a^{-1}P]=\kk[a,b,x,ay,b^ny+a^{-1}P_0]$. So, we may assume that $P=P(0,b,x) \in \k[b,x]\setminus b\k[b,x]$. 

Let us replace $y$ by $\frac{y}{a}$  and define $u=\frac{b^ny+P}{a}\in \k[a^{\pm 1},b,x,y]$. Doing so, we now need to prove that any element 
$w\in \k[a^{\pm 1},b,x,y]\cap \k[a,b^{\pm 1},x,u]$ belongs to the ring $\k[a,b,x,y,u]$. 

We write $w=\frac{W}{b^s}$ with $W\in \k[a,b,x,u]\subseteq \k[a,b,x,y,u]$ and $s\ge 0$. Using  the fact that $au=b^ny+P$, we can rewrite $W$ as
\[W=\sum_{i=1}^d u^i Q_i +R,\]
where $d\ge 1$, $Q_i\in \k[b,x,y]$ for each $i$ and $R\in \k[a,b,x,y]$.

If $s=0$, then $w=W\in \k[a,b,x,y,u]$ and we have nothing to prove. Therefore,   we  assume that $s>0$. This implies that $W\equiv 0\pmod{b}$, since  $w=\frac{W}{b^s}\in \k[a^{\pm 1},b,x,y]$. Denoting by $\hat{P}\in \k[x],\hat{Q}_i\in \k[x,y], \hat{R}\in \k[a,x,y]$ the elements such that $\hat P\equiv P,\hat Q_i\equiv Q_i,\hat R\equiv R\pmod b$, gives us the equality 
$
0=\sum_{i=1}^d (\frac{\hat P}{a})^i \hat Q_i +\hat{R}.$ Multiplying this equality by $a^d$,   we then obtain that
\[\begin{array}{l}0=\sum_{i=1}^d \hat{P}^ia^{d-i} \hat Q_i +a^d\hat{R}=\sum_{i=0}^{d-1} \hat{P}^{d-i}\hat Q_{d-i}a^{i} +a^d\hat{R}\in \k[a,x,y]=\k[x,y][a].\end{array}\]
Note that $\hat{P}\not=0$, since we assumed that $P\not\in b\cdot \k[b,x]$. Therefore, since all coefficients of $a^i$ with $i<d$ are equal to $0$, we   get $\hat Q_i=0$ for all $i=1,\dots,d$, and that $\hat{R}=0$. 

This means that $W$ is divisible by $b$ and we can thus write $w=\frac{W}{b^s}=\frac{W'}{b^{s-1}}$ with $W'\in \k[a,b,x,y,u]$. Arguing as before, we eventually conclude that $w\in \k[a,b,x,y,u]$, as desired.

The case where $n=1$ and $P(0,0,x)\not=0$  is similar, when exchanging the roles of $a$ and $b$.

\ref{RgXwhenEq}: We now assume that the inclusions of \ref{RegXfInclusion} are equalities, and prove that $X_f$ is a trivial $\A^1$-bundle if and only if $P(0,0,x)=P|_{a=b=0}$ is of degree~$1$. 

The equalities of  \ref{RegXfInclusion} imply that the ring $\mathcal{O}(X_f)$ of regular functions on $X_f$ is given  on the two charts $U_a \times \A^2$ and $U_{b}\times \A^2$ by
\[\kk[a,b,x,a^my,b^ny+a^{-m}P]\text{ and }\kk[a,b,x,b^ny,a^my-b^{-n}P],\]
respectively. In particular, considering the open embedding $X_f\hookrightarrow Y$ defined in Lemma~\ref{Lemm:HypersurfaceA5}, the ring of regular functions on $X_f$ is the restriction of the ring of regular functions on $Y$. Moreover, the $\A^2$-bundle $\rho_f\colon X_f\to \A^2_*$ is the restriction of $\pi\colon Y\to \A^2$ given by $(a,b,x,u,v)\to (a,b)$ (see Lemma~\ref{Lemm:HypersurfaceA5}). 

If $P(0,0,x)$ is of degree $1$, then the morphism $\pi\colon Y\to \A^2$, is a trivial $\A^2$-bundle (Lemma~\ref{lemm:piYtrivial}), so the restriction $\rho_f$ is also a trivial $\A^2$-bundle.

Conversely, we now assume that $\rho_f$ is a trivial $\A^2$-bundle and prove that $P(0,0,x)$ is of degree $1$. Let  $\chi\colon  \A^2\times \A^2_*\iso X_f$ be an isomorphism such that  $\rho_f\circ \chi$ is the projection onto the second factor. Then $\chi$ extends to a birational map $\A^4\dasharrow Y\subseteq \A^5$ between two affine varieties, which induces an isomorphism between their regular rings, since $\mathcal{O}(X_f)=\mathcal{O}(Y)|_{X_f}$ and $\mathcal{O}( \A^2\times \A^2_*)=\mathcal{O}(\A^4)|_{\A^2\times \A^2_*}$. Thus, $\chi$ is in fact an isomorphism $\A^2\iso Y$. By Lemma~\ref{Lemm:HypersurfaceA5}, it sends $\A^2\times (0,0)$ onto $Y\setminus X_f=Y\cap \{a=b=0\}=\pi^{-1}(0,0)$. Since $\pi^{-1}(0,0)\simeq \A^2$, we can now conclude, by Lemma~\ref{lemm:piYtrivial}, that the polynomial $P(0,0,x)$ is of degree $1$.  
\end{proof}

\begin{proposition}\label{Prop:Casemn1}
Let $f=f(a,b,x)\in \kk[a^{\pm 1},b^{\pm 1}][x]$ and let $m,n\ge 0$ be   such that $P(a,b,x)=a^mb^nf\in\k[a,b,x]$.
\begin{enumerate}
\item\label{Propmn0}
If $m=0$ or $n=0$, then the $\A^2$-bundle $\rho_f\colon X_f\to\A^2_*$ is  trivial.
\item\label{Propmn1}
Suppose that $P(0,0,x)\not=0$, i.e.~ that $m$ and $n$ are minimal with the property that $P(a,b,x)=a^mb^nf\in\k[a,b,x]$. If $m=1$ or $n=1$,then the $\A^2$-bundle $\rho_f\colon X_f\to\A^2_*$ is trivial if and only if $P(0,0,x)$ is a polynomial in $\kk[x]$ of degree one.
\end{enumerate}
\end{proposition}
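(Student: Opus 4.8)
The plan is to deduce Proposition~\ref{Prop:Casemn1} directly from the results already established in this section, treating the two assertions separately. For assertion~\ref{Propmn0}, if $m=0$ or $n=0$, then $f$ already lies in $\kk[a,b^{\pm 1}][x]$ or in $\kk[a^{\pm 1},b][x]$, i.e.~$f$ can be written as $r_a+r_b$ with one of the two summands equal to zero. By Lemma~\ref{Lemm:A1bundle}\ref{A1bundle3} (applied with $g=0$ and $\lambda=1$), this shows that $\rho_f$ is isomorphic to $\rho_0$ as an $\A^1$-bundle; but more is true, since the trivialisation is compatible with the full $\A^2$-bundle structure. Concretely, the transition function $(x,y+f(x))$ with $f\in \kk[a,b^{\pm 1}][x]$ already extends as an automorphism of $U_b\times \A^2$, so by Lemma~\ref{Lem:isomorphic-A2bundles} one takes $\alpha=\mathrm{id}$ and $\beta^{-1}=(x,y+f(x))$ to conclude that $\rho_f$ is the trivial $\A^2$-bundle.

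The substance is assertion~\ref{Propmn1}, and here the strategy is to reduce the statement on $\A^2$-bundles to the statement on $\A^1$-bundles obtained in Proposition~\ref{Prop:RingXf}. First I would observe that, since $m$ and $n$ are minimal, the hypothesis $P(0,0,x)\neq 0$ holds. Combined with the standing assumption that $m=1$ or $n=1$, Proposition~\ref{Prop:RingXf}\ref{RgXfequalities1} applies and tells us that the inclusions of \ref{RegXfInclusion} are equalities, so the hypotheses of Proposition~\ref{Prop:RingXf}\ref{RgXwhenEq} are met. That last assertion then yields, verbatim, the equivalence
\[
\rho_f\colon X_f\to \A^2_* \text{ is a trivial }\A^1\text{-bundle} \iff \deg_x P(0,0,x)=1.
\]
The remaining work is to upgrade \emph{trivial $\A^1$-bundle} to \emph{trivial $\A^2$-bundle} in this equivalence.

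The key point is that, for the varieties $X_f$, triviality as an $\A^2$-bundle and triviality as an $\A^1$-bundle coincide whenever the inclusions of Proposition~\ref{Prop:RingXf}\ref{RegXfInclusion} are equalities. One direction is immediate: a trivial $\A^2$-bundle is in particular a trivial $\A^1$-bundle. For the converse, I would trace through the proof of Proposition~\ref{Prop:RingXf}\ref{RgXwhenEq}: there the condition $\deg_x P(0,0,x)=1$ is shown to imply, via Lemma~\ref{lemm:piYtrivial}, that the ambient morphism $\pi\colon Y\to \A^2$ is a \emph{trivial $\A^2$-bundle}, and hence its restriction $\rho_f$ is a trivial $\A^2$-bundle, not merely a trivial $\A^1$-bundle. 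Thus both implications in \ref{RgXwhenEq} can be read at the level of $\A^2$-bundles once the ring equalities hold. Assembling these, I obtain that under the hypotheses of \ref{Propmn1} the bundle $\rho_f$ is a trivial $\A^2$-bundle precisely when $P(0,0,x)$ has degree one in $x$.

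The main obstacle I anticipate is this passage between the $\A^1$- and $\A^2$-bundle structures: Proposition~\ref{Prop:RingXf}\ref{RgXwhenEq} is phrased in terms of trivial $\A^1$-bundles, and one must check that the isomorphism $\chi\colon \A^2\times \A^2_*\iso X_f$ produced there, together with the identification $\mathcal{O}(X_f)=\mathcal{O}(Y)|_{X_f}$, really respects the $\A^2$-bundle structure $\rho_f$ (equivalently, extends to the trivialisation $Y\cong \A^4$ supplied by Lemma~\ref{lemm:piYtrivial}\ref{PQtrivialA2bundle}). This is exactly where the codimension-two complement $Y\setminus X_f=\pi^{-1}(0,0)\cong \A^2$ is used to extend regular functions, and care is needed to ensure the extension is an isomorphism of bundles over $\A^2$ and not merely of the underlying $\A^1$-bundles.
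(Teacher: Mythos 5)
Your treatment of assertion \ref{Propmn0} is correct and is essentially the paper's own argument. For assertion \ref{Propmn1}, the reduction to Proposition~\ref{Prop:RingXf}\ref{RgXfequalities1} and \ref{RgXwhenEq} is also exactly the route the paper takes, and your verification of the implication ``$\deg_x P(0,0,x)=1\Rightarrow\rho_f$ trivial'' by tracing Lemma~\ref{lemm:piYtrivial} through the hypersurface $Y$ is sound. The problem is your ``key point'': the claim that \emph{a trivial $\A^2$-bundle is in particular a trivial $\A^1$-bundle} is false --- you have the two implications exactly backwards. A trivialisation of the $\A^1$-bundle $X_f\to\A^2_*\times\A^1$ automatically commutes with the projection to $\A^2_*$ and hence trivialises $\rho_f$, so ``trivial $\A^1$-bundle $\Rightarrow$ trivial $\A^2$-bundle'' is the immediate direction; the converse fails, because a trivialisation of $\rho_f$ need not preserve the coordinate $x$. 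A counterexample lies inside the scope of the very statement you are proving: take $f=\frac{x}{ab}$, i.e.\ $m=n=1$ and $P=x$. Then $\rho_f$ is a trivial $\A^2$-bundle (it is trivialised by the bivariable $ax+by$, see Example~\ref{exple-bivariable1}), but $X_f$ is \emph{not} a trivial $\A^1$-bundle: by Lemma~\ref{Lemm:A1bundle} that would force $\frac{x}{ab}\in\kk[a^{\pm1},b][x]+\kk[a,b^{\pm1}][x]$, which is impossible since the Laurent monomial $a^{-1}b^{-1}x$ has both exponents negative. So the two notions of triviality do not coincide, even when the ring equalities of Proposition~\ref{Prop:RingXf}\ref{RegXfInclusion} hold.

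This false claim is load-bearing: it is your only argument for the direction ``$\rho_f$ trivial $\A^2$-bundle $\Rightarrow\deg_x P(0,0,x)=1$'', which is half of the equivalence to be proven. The repair is to bypass the $\A^1$-notion entirely. Despite the statement of Proposition~\ref{Prop:RingXf}\ref{RgXwhenEq} saying ``trivial $\A^1$-bundle'', what its proof actually establishes is the $\A^2$-statement in both directions (taken literally, the $\A^1$-statement fails for the same example $f=\frac{x}{ab}$); in particular its converse half takes as hypothesis precisely that $\rho_f$ is a trivial $\A^2$-bundle. There one extends the trivialisation $\chi\colon\A^2\times\A^2_*\iso X_f$ to an isomorphism $\A^4\iso Y$ --- this is where the ring equalities and the codimension-two complement $Y\setminus X_f=\pi^{-1}(0,0)$ are used --- concludes that $\pi^{-1}(0,0)\simeq\A^2$, and applies Lemma~\ref{lemm:piYtrivial} to get $\deg_x P(0,0,x)=1$. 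Citing that argument directly, rather than bridging through $\A^1$-triviality, closes the gap; this is what the paper's one-line proof of assertion \ref{Propmn1} implicitly does.
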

\begin{proof}
\ref{Propmn0}: If $m=0$ or $n=0$, then $f\in \k[a,b^{\pm 1}][x]$ or $f\in \k[a^{\pm 1},b][x]$. This implies that the corresponding $\A^1$-bundle $X_f\to \A^2_{*}\times \A^1$ is trivial  (see Lemma~\ref{Lemm:A1bundle}). In particular, $\rho_f\colon X_f\to \A^2_*$ is a trivial $\A^2$-bundle.

\ref{Propmn1}: This follows directly from the  assertions \ref{RgXfequalities1} and \ref{RgXwhenEq} in Proposition~\ref{Prop:RingXf}.
\end{proof} 
 
 \begin{remark}
If $m,n\geq2$, then the $\A^2$-bundle $\rho_f\colon X_f\to\A^2_*$ can be trivial, even if the polynomial $P(0,0,x)$ is not of degree one. This occurs for example in the special case where $f(a,b,x)=a^{-3}b^{-2}(a^2x-bx^2)$, which corresponds to the third V\'en\'ereau polynomial. The next example is another instance of this phenomena when $n=m=2$.
\end{remark}

\begin{example}
Let $P(a,b,x)=(a+b)x$ and $f(a,b,x)=a^{-2}b^{-2}P(a,b,x)=a^{-2}b^{-2}(a+b)x$. Then, the $\A^2$-bundle $\rho_f\colon X_f\to\A^2_*$ corresponds to the bivariable $\omega=a^2x+(b-a)y$ and is thus trivial. Indeed, a straightforward calculation gives $\alpha\circ\beta^{-1}=(x,y+f(a,b,x))$ where 
\[\alpha=(\omega,a^{-2}y)\in G_a\text{ and }\beta=(\omega, b^{-2}y-b^{-2}(a+b)x)\in G_b.\]
\end{example}
 

\
 
\end{document}